\newcommand{\commentout}[1]{}
\newcommand{\R}{\mathbb{R}}
\newcommand{\SM}{\textcolor{black}}
\newcommand {\e}  {\varepsilon}
\newcommand {\sg} {\sigma}
\newcommand {\Chi} {{\bf \raise 2pt \hbox{$\chi$}} }
\newcommand {\f}   {\frac}
\newcommand {\p}   {\partial}
\newcommand{\fer}{\eqref}
\newcommand{\dis}{\displaystyle}
\begin{document}
 \title{Selection and mutation in a shifting and fluctuating environment\thanks{Received date, and accepted date (The correct dates will be entered by the editor).}}


          \author{Susely Figueroa Iglesias\thanks{Institut de Math\'ematiques de Toulouse; UMR 5219, Universit\'e de Toulouse; CNRS, UPS IMT, F-31062 Toulouse Cedex 9, France; E-mail: Susely.Figueroa@math.univ-toulouse.fr}, \and Sepideh Mirrahimi\thanks{Institut de Math\'ematiques de Toulouse; UMR 5219, Universit\'e de Toulouse; CNRS, UPS IMT, F-31062 Toulouse Cedex 9, France; E-mail: Sepideh.Mirrahimi@math.univ-toulouse.fr} }

         \pagestyle{myheadings} \markboth{Adaptation in a time-varying environment}{S.FIGUEROA IGLESIAS \& S. MIRRAHIMI} \maketitle

          \begin{abstract}
               We study the evolutionary dynamics of a phenotypically structured population in a changing environment, where the environmental conditions vary with a linear trend but in an oscillatory manner. Such phenomena can be described by parabolic Lotka-Volterra type equations with non-local competition and a time dependent growth rate. We first study the long time behavior of the solution to this problem. Next, using an approach based on Hamilton-Jacobi equations we study asymptotically such long time solutions when the effects of the mutations are small. We prove that, as the effect of the mutations vanishes, the phenotypic density of the population concentrates on a single trait which varies linearly with time, while the size of the population oscillates periodically. In contrast with the case of an environment without linear shift, such dominant trait does not have the maximal growth rate in the averaged environment and there is a cost on the growth rate due to the {environmental} shift. We also provide an asymptotic expansion for the average size of the population and for the critical speed above which the population goes extinct, which is closely related to the derivation of an asymptotic expansion for the Floquet eigenvalue in terms of the diffusion rate. By mean of a biological example, this expansion allows to show that the fluctuations on the environment may help the population to follow the {environmental}  shift in a better way.
          \end{abstract}
\begin{keywords}  Parabolic integro-differential equations; Time periodic coefficients; Hamilton-Jacobi equation; Dirac concentrations; Adaptive evolution;  {Changing environment}.
\end{keywords}

 \begin{AMS} 35A02, 35B10, 35C20, 35D40, 35P15, 92D15.
\end{AMS}
          \section{Introduction}\label{intro}
\subsection{Model and motivations.}\label{subs:model_motiv}
The goal of this article is to study the evolutionary dynamics of a phenotypically structured population in an environment which varies with a linear trend but in an oscillatory manner. We study  the following non-local parabolic equation
\begin{equation}
\label{GeneralCase_ntilde}
\left\{
\begin{array}{l}
\partial_t\widetilde{n}-\sigma\partial_{xx}\widetilde{n}=\widetilde{n}[a({e(t)},x-\widetilde{c}t)-\widetilde{\rho}(t)],\quad (t,x)\in[0,+\infty)\times\R,\\
\widetilde{\rho}(t)=\dis\int_{\R}\widetilde{n}(t,x)dx,\\
\widetilde{n}(t=0,x)=\widetilde{n}_0(x).
\end{array}
\right.
\end{equation}
This equation models the dynamics of a population which is  structured by a phenotypic trait $x\in\R$. Here, $\widetilde{n}$ corresponds to the density of individuals with trait $x$. We denote by $a({e(t)},x-\widetilde{c}t)$ the intrinsic growth rate of an individual with trait $x$ at time $t$. The term $-\widetilde{c}t$ has been introduced to consider {a shifting of the fitness landscape} with a linear trend. {The function $e(t):\R_+\to E$, represents the environmental state at time $t$ (for instance, the temperature) and  is assumed to be periodic, with   $E$ corresponding to the set of the states of the environment, for instance an interval corresponding to the possible temperatures. The variation of the environmental state $e$ may have an impact on the optimal trait (the trait that maximizes $a(e,x)$) or other parameters of  selection as for instance the pressure of   selection (corresponding to the curvature of $a(e,x)$ around its maximum point, see Section \ref{Bio_example} for some examples).} The term $\widetilde{\rho}$ which corresponds to the total size of the population represents a competition term.  Here, we assume indeed a uniform competition between all the individuals. The diffusion term models the mutations, with $\sigma$ the mutation rate. 
 
 {A natural motivation to study such type of problem is the fact that many natural populations are subject both to a directional change of the phenotypic optimum and  fluctuations of the environment (\cite{chevin-et-al}). Such fluctuations may be periodic, due for instance to seasonal effects, or stochastic due to the random change of the environment.
 Here we consider a deterministic growth rate that varies with a linear trend but in an oscillatory manner. Our study provides also insights for the case of random environments since it is based on some homogenization techniques that could also be used to study the random case. However, the study of the random fluctuations would still need considerable work and is out of the scope of this article. }
 
  Will the population be able to adapt to the environmental change? Is there a {maximal} speed above which the population will get extinct? How is such maximal speed  modified due to the fluctuations?

 \subsection{Related works.}\label{subs:related_works}
 {The impact of changing environments on the evolution of quantitative traits  has been  studied using closely related quantitative genetics models in the biological literature (see for instance \cite{ML.WG.AW:91,ML.RL:93,RB.LM:95,RL.SS:96,MK.SM:14}). In these works one usually assumes that the growth rate $a$ has a particular form of quadratic type and that the environmental change has only an impact on the optimal trait. However, the environmental variations may also modify other parameters of selection, as for instance the  pressure of   selection \cite{JG.BT.HD:20}. Finally, the works studying a periodic environment, consider only a particular sinusoidal form of periodic variation \cite{ML.WG.AW:91,RL.SS:96}.}

\noindent
Models closely related to \eqref{GeneralCase_ntilde}, but with a local reaction term and no fluctuation, have been widely studied  (see for instance \cite{Berestycki-et-al, Berestycki_Rossi1, Berestycki_Rossi2,Berestycki_Fang}). Such models are introduced to study dynamics of populations structured by a space variable neglecting evolution. It is shown in particular that there exists a critical speed of  environment  change $c^*$, such that the population  survives if and only if the environment change occurs with a speed less than $c^*$.  We also refer to \cite{Bouhours2016} where an integro-difference model has been studied for the spatial dynamics of a population in the case of  a randomly changing  environment. Moreover, in  \cite{Raoul_Berest_Alfaro}, both spatial and evolutionary dynamics of a population in an environment with linearly moving optimum has been studied. While in the present work, we don't include any spatial structure, we take into account oscillatory change of environment in addition to a change with linear trend.
 \\
\noindent
The evolutionary dynamics of structured populations under periodic fluctuations of the environment has been recently studied  by \cite{Souganidis, Lorenzi-Desvillettes, Mirrahimi&Figueroa_pprint, Almeida}. The works in \cite{Lorenzi-Desvillettes, Almeida} are   focused on the study of a particular form of growth rate $a$ and in particular some semi-explicit solutions to such equations are provided. In \cite{Souganidis,Mirrahimi&Figueroa_pprint} some asymptotic analysis of such equations for general growth rates are provided. The present article is closely related to \cite{Mirrahimi&Figueroa_pprint} where a periodically evolving environment was considered without the linear trend. The presence of such linear trend of environment change leads to new difficulties in the asymptotic analysis. Moreover, we go further than the results in \cite{Mirrahimi&Figueroa_pprint} and provide an asymptotic expansion for the average size of the population in terms of the mutation rate. Such expansion is closely related to an asymptotic expansion of the Floquet eigenvalue for the linear problem. Furthermore in a very recent work \cite{Carrere_Nadin_pprint} the authors study a closely related model, but without the linear change of the environment, and study the impact of the different parameters of the model on the final population size.\\

In this article, we use an asymptotic approach based on Hamilton-Jacobi equations with constraint. This approach has been developed during the last decade to study the asymptotic solutions of selection-mutation equations, assuming small effect of the mutations. Such equations have the property that their solution concentrate as Dirac masses on the fittest traits.  There is a large literature on this approach. We refer to  \cite{DJMP,Perthame_Barles,MAA} for the establishment of the basis of this approach for homogeneous environments.

\subsection{Mathematical assumptions}\label{subs:math_assumptions}
To introduce our assumptions, we first define 
$$
\overline{a}(y)=\dis\frac{1}{T}\int_0^Ta({e(t)},y)dt.
$$
{We then assume that $e:\R_+\to E$, a  periodic function, and  $a\in L^\infty(E,C^3(\R))$ are   such that:
\begin{equation}\tag{H1}
\label{a_W3inf}
e(t)=e(t+T),\ \forall\ t\in\R_+,\quad\text{and}\ \exists\ d_0>0: \Vert a(e,\cdot)\Vert_{L^{\infty}(\R)}\leq d_0\quad\forall\ e\in E,
\end{equation}}
and that the averaged function $\overline{a}$ attains its maximum and 
\begin{equation}\tag{H2a}
\label{max_a}
\max_{x\in\R} \ \overline{a}(x)>0,
\end{equation}
which means that there exist at least some traits with strictly positive average growth rate.\\
Moreover, for some of our results (Theorem \ref{Unique_Identif} and Theorem \ref{Asymp_Expansion}) we assume that this maximum is attained at a single point $x_m$; that is
\begin{equation}\tag{H2b}
\label{x_m}
\exists! \ x_m: \max_{x\in\R} \overline{a}(x)=\overline{a}(x_m),
\end{equation}
and also 
\begin{equation}\tag{H3}
\label{x_var}
\exists! \  \overline{x}\leq x_m; \ \overline{a}(\overline{x})+\frac{\widetilde{c}^2}{4\sigma}=\overline{a}(x_m).
\end{equation}
{Let us explain the role of the trait $\overline x$ in our results. We will show in Theorem \ref{Unique_Identif} that, as the mutation rate $\sigma$ vanishes and when the speed of the environmental change $\widetilde c$ is not too high, the phenotypic density $\widetilde n$ concentrates around a single trait which moves linearly with  the same speed $\widetilde c$. The population density concentrates indeed around the trait $\overline x+\widetilde c t$ and follows in this way the optimum of the average environment, that is $x_m+\widetilde c t$, with a constant lag.\\}
Finally, we make the following assumption on the initial data:
\begin{equation}\tag{H4}
\label{n0_exp}
0\leq {\widetilde{n}_0(x)}\leq e^{C_1-C_2|x|}, \quad \forall x\in\R,
\end{equation}
which indicates that the initial density of individuals with large traits  is exponentially small.\\

\subsection{Preliminary results.}\label{subs:pre_rsults}
To avoid the shift in the growth rate $a$, we transform our problem with a change of variables. We introduce indeed $n(t,x)=\widetilde{n}(t,x+\widetilde{c}t)$ which satisfies:
\begin{equation}
\label{Case_n}
\left\{
\begin{array}{l}
\partial_tn-\widetilde{c}\partial_xn-\sigma\partial_{xx}n=n[a({e(t)},x)-\rho(t)],\quad (t,x)\in[0,+\infty)\times\R,\\
\rho(t)=\dis\int_{\R}n(t,x)dx,\\
n(t=0,x)=\widetilde{n}_0(x).
\end{array}
\right.
\end{equation}

\noindent
Next, we introduce the linearized problem associated to \eqref{Case_n}. Let $m(t,x)=n(t,x)e^{\int_0^t\rho(s)ds}$, for $n$ the solution of \eqref{Case_n}, then $m$ satisfies
\begin{equation}
\label{modelLap_in_m}
\left\{\begin{array}{cr}
\partial_t m-\widetilde{c}\partial_x m-\sigma\partial_{xx}m=a({e(t)},x)m,& (t,x)\in[0,+\infty)\times\R,\\
m(t=0,x)=\widetilde{n}_0(x), & x\in\R.\\
\end{array}
\right.
\end{equation}
We also introduce the corresponding parabolic eigenvalue problem as follows
\begin{equation}
\label{Model_EigenvalueR}
\left\{\begin{array}{cr}
\partial_tp_c-\widetilde{c}\partial_xp_c-\sigma\partial_{xx}p_c-a({e(t)},x)p_c=\lambda_{\widetilde{c},\sigma} p_c,& (t,x)\in[0,+\infty)\times\R,\\
0<p_c; \ p_c(t,x)=p_c(t+T,x), & (t,x)\in[0,+\infty)\times\R.\\
\end{array}
\right.
\end{equation}
For better legibility, we omit the tilde in the index of $p_c$, while we still refer to the problem with constant $\widetilde{c}$.
We also define the eigenvalue problem in the bounded domain $[-R,R]$, for some $R>0$,
\begin{equation}
\label{Model_EigenvalueBR}
\left\{\begin{array}{cr}
\partial_tp_R-\widetilde{c}\partial_xp_R-\sigma\partial_{xx}p_R-a({e(t)},x)p_R=\lambda_R p_R,& (t,x)\in[0,+\infty)\times [-R,R],\\
p_R=0, & (t,x)\in[0,+\infty)\times\{-R,R\},\\
0<p_R;\ p_R(t,x)=p_R(t+T,x), & (t,x)\in[0,+\infty)\times[-R,R].
\end{array}
\right.
\end{equation}
It is known that problem \eqref{Model_EigenvalueBR} has a unique eigenpair $(\lambda_R,p_R)$ with $p_R$ a strictly positive eigenfunction such that $\Vert p_R(0,\cdot)\Vert_{L^\infty([-R,R])}=1$, (see \cite{hess}). Another fundamental result (see for instance \cite{huska08}), for our purpose is that the function $R\mapsto\lambda_R$ is decreasing and $\lambda_R\rightarrow\lambda_{\widetilde{c},\sigma}$ as $R\rightarrow+\infty$.\\
To announce our first result we introduce another assumption. We assume that $a$ takes small values at infinity in the following sense: there exist positive constants $\delta$ and $R_0$ such that
\begin{equation}\tag {Hc}
\label{a_lambda_neg}
 a({e},x)+\lambda_{\widetilde{c},\sigma}\leq-\delta, \quad \forall {e\in E} \text{ and }|x|\geq R_0.
\end{equation}

\begin{proposition}
\label{Prop1}
Assume \eqref{a_W3inf}, \eqref{n0_exp} and \eqref{a_lambda_neg}. Then for problem \eqref{Model_EigenvalueR} there exists a unique generalized principal eigenfunction $p_c$ associated to $\lambda_{\widetilde{c},\sigma}$, with $\Vert p_c(0,\cdot)\Vert_{L^\infty(\R)}=1$. Moreover, we have $p_c=\dis\lim_{R\rightarrow\infty}p_R$ and
\begin{equation}
\label{bound_p}
p_c(t,x)\leq \Vert p_c\Vert_{L^\infty} e^{-\nu(|x|-R_0)},\qquad\forall(t,x)\in[0,+\infty)\times\R,
\end{equation}
for $\nu= -\frac{\widetilde{c}}{2\sg}+\sqrt{\frac{\delta}{\sg}+\frac{1}{2}\left(\frac{\widetilde{c}}{\sg}\right)^2}$.\\
Finally,  the eigenfunction $p_c(t,x)$ is exponentially stable, in the following sense; there exists $\alpha>0$ such that:
\begin{equation}
\label{Exp_Conv_m}
\Vert m(t,x)e^{t\lambda_{\widetilde{c},\sigma}}-\alpha p_c(t,x)\Vert_{L^{\infty}(\R)}\rightarrow0\quad\text{exponentially fast as}\ t\rightarrow\infty.
\end{equation}
\end{proposition}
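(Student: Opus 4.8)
The plan is to prove Proposition~\ref{Prop1} in three stages, following the classical construction of generalized principal eigenfunctions for periodic-parabolic operators on unbounded domains, and then establishing exponential stability via a spectral gap argument.

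\textbf{Step 1: Existence of $p_c$ as a monotone limit.} Since $R\mapsto\lambda_R$ is decreasing and $\lambda_R\to\lambda_{\widetilde c,\sigma}$, I would first note that the solutions $p_R$ of \eqref{Model_EigenvalueBR} (extended by $0$ outside $[-R,R]$) form, after the normalization $\Vert p_R(0,\cdot)\Vert_{L^\infty}=1$, an increasing family in $R$ by the parabolic maximum principle (a supersolution/subsolution comparison on the nested domains, using $\lambda_{R'}\le\lambda_R$ for $R'\ge R$). The key a priori bound is \eqref{bound_p}: for $|x|\ge R_0$ the function $\overline p(t,x)=\Vert p_R\Vert_{L^\infty}e^{-\nu(|x|-R_0)}$ is, thanks to \eqref{a_lambda_neg}, a supersolution of the operator $\partial_t-\widetilde c\partial_x-\sigma\partial_{xx}-a-\lambda_{\widetilde c,\sigma}$ precisely when $-\sigma\nu^2-\widetilde c\nu+\delta\ge 0$, i.e. when $\nu\le -\frac{\widetilde c}{2\sigma}+\sqrt{\frac{\delta}{\sigma}+\frac12(\frac{\widetilde c}{\sigma})^2}$ (one checks the quadratic $\sigma\nu^2+\widetilde c\nu-\delta$ is $\le 0$ at that value); here the sign chosen in $e^{-\nu|x|}$ matches the convective term on each half-line. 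Comparison then gives $p_R\le\overline p$ uniformly in $R$. Parabolic regularity (interior Schauder estimates, using $a\in L^\infty(E,C^3)$ and periodicity) upgrades this $L^\infty$ bound to local $C^{1+\alpha/2,2+\alpha}$ bounds, so $p_R\to p_c$ locally uniformly along a subsequence, $p_c$ solves \eqref{Model_EigenvalueR}, is periodic, nonnegative, satisfies \eqref{bound_p}, and $p_c>0$ by the strong maximum principle. The normalization $\Vert p_c(0,\cdot)\Vert_{L^\infty(\R)}=1$ follows from the exponential decay (the supremum is attained on a compact set) together with local uniform convergence.

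\textbf{Step 2: Uniqueness.} For uniqueness of the generalized principal eigenfunction associated with $\lambda_{\widetilde c,\sigma}$, I would use a sliding/sweeping argument: given another positive periodic solution $q$ with the same eigenvalue, the ratio $w=q/p_c$ is well defined and, by \eqref{bound_p} applied to both and a Harnack inequality, is bounded; writing the equation satisfied by $w$ one finds $w$ is a bounded periodic solution of a linear parabolic equation in divergence-type form with no zeroth-order term, so by the strong maximum principle applied at an interior extremum of $w$ over one period, $w$ is constant; the normalization forces $q=p_c$. (Alternatively one invokes the known uniqueness of generalized principal eigenfunctions for periodic-parabolic problems under a decay condition such as \eqref{a_lambda_neg}, referring to \cite{huska08}.)

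\textbf{Step 3: Exponential stability \eqref{Exp_Conv_m}.} Set $\widehat m(t,x)=m(t,x)e^{t\lambda_{\widetilde c,\sigma}}$; it solves the same linear periodic-parabolic equation as $p_c$ but with zero eigenvalue, i.e. $\partial_t\widehat m-\widetilde c\partial_x\widehat m-\sigma\partial_{xx}\widehat m-a\widehat m=\lambda_{\widetilde c,\sigma}\widehat m$ rearranged so that $p_c$ is a steady (periodic) state. Working in the weighted space with weight $p_c^{-1}$, write $\widehat m=\phi\,p_c$; then $\phi$ satisfies a periodic-parabolic equation in self-adjoint form for which $\phi\equiv$const is the principal state, and the existence of a spectral gap $\alpha>0$ between the principal Floquet exponent (which is $0$) and the rest of the spectrum — available because the bounded-domain operators have discrete spectrum with a gap and the decay \eqref{a_lambda_neg} prevents essential spectrum from reaching $0$ — yields $\Vert\phi(t,\cdot)-\alpha\Vert\to 0$ exponentially, for $\alpha$ the projection of the initial datum onto the principal eigenspace; multiplying back by $p_c$ and using \eqref{bound_p} and \eqref{n0_exp} to control the weighted norms gives \eqref{Exp_Conv_m}. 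I expect \textbf{the main obstacle} to be precisely this last point: making the spectral-gap/relative-entropy argument rigorous on the whole line, since the operator is non-self-adjoint (the drift $-\widetilde c\partial_x$), non-compact, and one must rule out approach of the essential spectrum to the principal exponent — this is where \eqref{a_lambda_neg} and the exponential bound \eqref{bound_p} (with the explicit $\nu$ incorporating $\widetilde c$) do the essential work, allowing either a direct sub/supersolution squeeze $\alpha(1-Ce^{-\gamma t})p_c\le\widehat m\le\alpha(1+Ce^{-\gamma t})p_c$ built from $p_c$ and its bounds, or a generalized relative entropy (GRE) estimate against the adjoint eigenfunction, which likewise requires integrability granted by \eqref{bound_p}.
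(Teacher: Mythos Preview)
Your outline is broadly reasonable, but it diverges from the paper's proof in two places and contains a couple of loose ends worth flagging.

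\textbf{How the paper proceeds.} The paper does \emph{not} carry out Steps~1--3 directly: for existence, uniqueness and the exponential stability \eqref{Exp_Conv_m} it simply invokes the argument of Lemma~6 in \cite{Mirrahimi&Figueroa_pprint}, which rests on the \emph{exponential separation} theory for general (non-self-adjoint) linear periodic--parabolic operators in \cite{huska08}. That theory gives directly the decomposition of $m(t,\cdot)e^{t\lambda_{\widetilde c,\sigma}}$ into a component in $\mathrm{span}\{p_c(t,\cdot)\}$ and an exponentially decaying remainder, without needing to set up a spectral gap or GRE argument from scratch. So your Step~3 is replaceable by a single citation; the ``main obstacle'' you anticipate is absorbed by that reference.

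\textbf{The bound \eqref{bound_p}.} This is the only part the paper proves in detail, and the argument differs from yours. The paper does \emph{not} work with $p_R$ and pass to the limit; instead it extends $p_c$ periodically to negative times and compares with
\[
\zeta(t,x)=\Gamma e^{-\delta(t-t_0)}+\Gamma e^{-\nu(|x|-R_0)},\qquad \Gamma=\Vert p_c\Vert_{L^\infty},
\]
on $\Lambda_0=\{t>t_0,\ |x|>R_0\}$. The time-decaying term is what makes the comparison legitimate: at $t=t_0$ one has $\zeta\ge\Gamma\ge p_c$ trivially, whereas your purely spatial barrier $\overline p=\Gamma e^{-\nu(|x|-R_0)}$ has no initial-time control for a periodic solution. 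Letting $t_0\to-\infty$ kills the first term and yields \eqref{bound_p}. Your route (apply the barrier to $p_R$, then let $R\to\infty$) can be made to work too, but it requires (i) a periodic comparison principle on $\{R_0<|x|<R\}$, which is fine since $a+\lambda_R<0$ there for $R$ large, and (ii) replacing $\lambda_{\widetilde c,\sigma}$ by $\lambda_R$ in the barrier computation and recovering the sharp constant in the limit.

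\textbf{Two small gaps in your write-up.} First, the claim that $(p_R)_R$ is \emph{increasing} after normalising $\Vert p_R(0,\cdot)\Vert_\infty=1$ is not obvious and is not needed; locally uniform compactness via Schauder suffices. Second, your quadratic $-\sigma\nu^2-\widetilde c\nu+\delta\ge0$ has positive root $-\tfrac{\widetilde c}{2\sigma}+\sqrt{\tfrac{\delta}{\sigma}+\tfrac14(\tfrac{\widetilde c}{\sigma})^2}$, with $\tfrac14$, not the $\tfrac12$ you wrote; the paper's sharper value of $\nu$ comes from an additional $\tfrac{\widetilde c^2}{4\sigma}$ in the barrier inequality (via the identity $\widetilde a_c+\tfrac{\widetilde c^2}{4\sigma}=a+\lambda_{\widetilde c,\sigma}\le-\delta$), which your purely spatial barrier does not capture.
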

The proof of this proposition is based on the results in \cite{Mirrahimi&Figueroa_pprint}.\\
We next define the $T-$periodic functions $Q_c(t)$ and $P_c(t,x)$ as follows:
\begin{equation}
\label{PyQ}
Q_c(t)=\dfrac{\int_{\R}a({e(t)},x)p_c(t,x)dx}{\int_{\R}p_c(t,x)dx},\quad P_c(t,x)=\dfrac{p_c(t,x)}{\int_{\R}p_c(t,x)dx},
\end{equation}
and we recall a result proved in \cite{Mirrahimi&Figueroa_pprint}.
\begin{proposition}
\label{prop2}
There exists a unique periodic solution $\widehat{\rho}(t)$ to the problem
\begin{equation}
\label{Rho_equation}
\left\{\begin{array}{l}
\dfrac{d\widehat{\rho}}{dt}=\widehat{\rho}\left[Q_c(t)-\widehat{\rho}\right],\quad t\in(0,T),\\
\widehat{\rho}(0)=\widehat{\rho}(T),
\end{array}\right.
\end{equation}
if and only if $\dis\int_0^TQ_c(t)dt>0$. Moreover this solution can be explicitly expressed as follows:
\begin{equation}
\label{Exp_rho_var}
\widehat{\rho}(t)=\frac{1-\exp\left[-\dis\int_0^{T}Q_c(s)ds\right]}{\exp\left[-\dis\int_0^{T}Q_c(s)ds\right]\dis\int_t^{t+T}\exp\left[\dis\int_t^s Q_c(\theta)d\theta\right]ds}.
\end{equation}
\end{proposition}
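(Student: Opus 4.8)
The plan is to reduce the logistic equation \eqref{Rho_equation} to a linear one by the Bernoulli change of unknown $u(t)=1/\widehat{\rho}(t)$, looking throughout for \emph{positive} $T$-periodic solutions (the trivial solution $\widehat{\rho}\equiv 0$ being irrelevant, since $\widehat{\rho}$ represents the limiting size of the population). On positive solutions this substitution is licit, and a direct computation shows that \eqref{Rho_equation} is equivalent to the linear periodic problem
\[
u'(t)+Q_c(t)\,u(t)=1,\qquad u(0)=u(T),
\]
positive $T$-periodic solutions of \eqref{Rho_equation} being in one-to-one correspondence with positive $T$-periodic solutions of this linear problem.

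Next I would solve this linear ODE explicitly. With $E(t):=\exp\!\big(\int_0^t Q_c(s)\,ds\big)$ as integrating factor, the general solution is $u(t)=E(t)^{-1}\big(u(0)+\int_0^t E(s)\,ds\big)$. Since $Q_c$ is $T$-periodic, $E(t+T)=E(T)E(t)$, so imposing $u(T)=u(0)$ reduces to the scalar equation
\[
\big(E(T)-1\big)\,u(0)=\int_0^T E(s)\,ds.
\]
If $\int_0^T Q_c=0$, then $E(T)=1$ and this has no solution, so \eqref{Rho_equation} has no positive periodic solution. If $\int_0^T Q_c\neq 0$, it determines $u(0)=\big(\int_0^T E(s)\,ds\big)\big/\big(E(T)-1\big)$ uniquely.

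It remains to settle the sign and to recover the closed formula. As $\int_0^T E(s)\,ds>0$, the value $u(0)$ above is positive iff $E(T)>1$, i.e. iff $\int_0^T Q_c(t)\,dt>0$; when $\int_0^T Q_c<0$ one gets $u(0)<0$ and no positive periodic solution exists. Assuming $\int_0^T Q_c>0$, I would plug $u(0)$ back into the general solution and use again $E(t+T)=E(T)E(t)$ together with additivity of the integral (a change of variables $s\mapsto s-T$) to rewrite
\[
u(t)=\frac{E(T)^{-1}}{1-E(T)^{-1}}\int_t^{t+T}\exp\!\Big(\int_t^s Q_c(\theta)\,d\theta\Big)ds,
\]
which is manifestly positive and $T$-periodic. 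Inverting $u=1/\widehat{\rho}$ then yields exactly formula \eqref{Exp_rho_var} and shows $\widehat{\rho}>0$, completing both implications.

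The computation is elementary; the only points requiring some care are the bookkeeping of signs in the ``if and only if'' (treating the degenerate case $\int_0^T Q_c=0$ separately) and the algebraic rearrangement that, exploiting the $T$-periodicity of $Q_c$, casts the solution into the symmetric form with the integral over $[t,t+T]$ that appears in \eqref{Exp_rho_var}. I do not expect any genuine obstacle beyond these.
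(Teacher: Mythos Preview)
Your proof is correct. The paper does not actually prove this proposition; it simply recalls it as a result from \cite{Mirrahimi&Figueroa_pprint}. Your approach via the Bernoulli substitution $u=1/\widehat{\rho}$ is the standard one, and your case analysis for the ``if and only if'' as well as the algebraic rearrangement using $E(t+T)=E(T)E(t)$ to reach the form \eqref{Exp_rho_var} are carried out cleanly.
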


\subsection{The main results and the plan of the paper.}\label{subs:main_results}
{We are interested in} determining conditions on the environment shift speed $\widetilde{c}$ which leads to extinction or survival of the population. In the case of the population survival we then try to characterize asymptotically the population density considering small effect of the mutations. \\

\noindent
To present our result on the survival criterion,  we define the "critical speed".
\begin{definition}
We define the critical speed $\widetilde{c}^*_\sigma$ as follows
\begin{equation}
\label{Critical_c}
\widetilde{c}^*_\sigma=\left\{
\begin{array}{lr}
2\sqrt{-\sg\lambda_{0,\sigma}},&\text{if}\ \lambda_{0,\sigma}<0,\\
0,& \text{otherwise},
\end{array}
\right.
\end{equation}
where $\lambda_{0,\sigma}$ corresponds to the principal eigenvalue introduced by Proposition \ref{Prop1}, in the case $c=0$.
\end{definition}

The next result shows that $\widetilde{c}^*_\sigma$ is indeed a critical speed of {environmental} change above which the population goes extinct. \\
\begin{proposition}{{(Long time behavior)}}\\
\label{ConvLambda}
Let $n(t,x)$ be the solution of \eqref{Case_n}. Assume \eqref{a_W3inf}, \eqref{max_a}, \eqref{n0_exp}  and \eqref{a_lambda_neg}. Then the following statements hold:
\begin{itemize}
\item[(i)] if $\widetilde{c}\geq \widetilde{c}^*_\sigma$, then the population will go extinct, i.e.
$\rho(t)\rightarrow0,$ as $t\rightarrow\infty$, 
\item[(ii)] if $\widetilde{c}<\widetilde{c}^*_\sigma$,  then $\vert\rho(t)-\widehat{\rho}(t)\vert\rightarrow0$, as $t\rightarrow\infty$, with $\widehat{\rho}$ the unique solution of \eqref{Rho_equation}.
\item[(iii)] Moreover, $\quad\left\Vert\dfrac{n(t,x)}{\rho(t)}-P_c(t,x)\right\Vert_{L^\infty}\longrightarrow0$, as $t\rightarrow\infty$. Consequently we have, as $t\rightarrow\infty$:
\begin{equation}
\label{convPerio}
\Vert n(t,\cdot)-\widehat{\rho}(t)P_c(t,\cdot)\Vert_{L^\infty}\rightarrow 0,\:\mathrm{if}\:\widetilde{c}<\widetilde{c}^*_\sigma\quad\mathrm{and}\quad\Vert n\Vert_{L^\infty}\rightarrow0,\:\mathrm{if}\:\widetilde{c}\geq \widetilde{c}^*_\sigma.
\end{equation}
\end{itemize} 
\end{proposition}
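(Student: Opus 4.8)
The plan is to reduce the dichotomy to the sign of the Floquet eigenvalue $\lambda_{\widetilde{c},\sigma}$ and then to carry out a logistic‑type ODE analysis for $\rho$. \emph{Two preliminary identities.} The change of unknown $p(t,x)=e^{-\frac{\widetilde{c}}{2\sigma}x}q(t,x)$ conjugates $\partial_t-\widetilde{c}\partial_x-\sigma\partial_{xx}-a(e(t),\cdot)$ into $\partial_t-\sigma\partial_{xx}-a(e(t),\cdot)+\frac{\widetilde{c}^2}{4\sigma}$; on each truncated domain $[-R,R]$ it is a bijection between the positive Dirichlet eigenpairs of \eqref{Model_EigenvalueBR} for drift $\widetilde{c}$ and for drift $0$ (the homogeneous boundary condition is preserved) that shifts the eigenvalue by $\widetilde{c}^2/(4\sigma)$. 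By uniqueness of the principal Dirichlet eigenpair and letting $R\to+\infty$ this gives $\lambda_{\widetilde{c},\sigma}=\lambda_{0,\sigma}+\frac{\widetilde{c}^2}{4\sigma}$, hence, comparing with \eqref{Critical_c}, $\widetilde{c}\ge\widetilde{c}^*_\sigma\iff\lambda_{\widetilde{c},\sigma}\ge0$. Next, integrating \eqref{Model_EigenvalueR} in $x$ and dropping the boundary terms by \eqref{bound_p} yields $\frac{d}{dt}\ln\int_{\R}p_c(t,x)\,dx=Q_c(t)+\lambda_{\widetilde{c},\sigma}$, so integrating over one period and using the $T$‑periodicity of $p_c$ gives $\int_0^TQ_c(t)\,dt=-T\lambda_{\widetilde{c},\sigma}$; in particular the solvability condition of Proposition~\ref{prop2} holds precisely when $\widetilde{c}<\widetilde{c}^*_\sigma$, so $\widehat{\rho}$ is defined exactly in case (ii).

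\emph{The profile (statement (iii)).} We may assume $\widetilde{n}_0\not\equiv0$, so that $\rho(t)>0$ for $t>0$ by the strong maximum principle. With $m(t,x)=n(t,x)e^{\int_0^t\rho(s)\,ds}$, which solves \eqref{modelLap_in_m}, the exponential factors cancel in the ratio: $\frac{n(t,x)}{\rho(t)}=\frac{m(t,x)e^{t\lambda_{\widetilde{c},\sigma}}}{\int_{\R}m(t,y)e^{t\lambda_{\widetilde{c},\sigma}}\,dy}$. By \eqref{Exp_Conv_m} the numerator converges to $\alpha p_c(t,x)$ in $L^\infty(\R)$; to handle the denominator I would upgrade this to the $L^1$ statement $\int_{\R}m(t,y)e^{t\lambda_{\widetilde{c},\sigma}}\,dy\to\alpha\int_{\R}p_c(t,y)\,dy$, using a uniform‑in‑$t$ bound $m(t,x)e^{t\lambda_{\widetilde{c},\sigma}}\le Ce^{-\nu'|x|}$ proved exactly as \eqref{bound_p} (comparison with the initial bound \eqref{n0_exp} and assumption \eqref{a_lambda_neg}). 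This yields $\Vert\frac{n}{\rho}-P_c\Vert_{L^\infty}\to0$, the first claim of (iii); note it uses only Proposition~\ref{Prop1}, not the sign of $\lambda_{\widetilde{c},\sigma}$.

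\emph{The total mass (statements (i) and (ii)).} Differentiating $\rho(t)=\int_{\R}n(t,x)\,dx$ along \eqref{Case_n} gives $\rho'=\rho[\overline{Q}(t)-\rho]$ with $\overline{Q}(t)=\int_{\R}a(e(t),x)\frac{n(t,x)}{\rho(t)}\,dx$, and, since $\Vert a(e,\cdot)\Vert_{L^\infty}\le d_0$, the previous step together with the uniform tail bound yields $\overline{Q}(t)-Q_c(t)\to0$ uniformly. Writing $R(t)=\int_0^t\rho$, the identity $\frac{d}{dt}e^{R(t)}=\int_{\R}m(t,x)\,dx$ integrates to $\rho(t)=\frac{\int_{\R}m(t,x)\,dx}{1+\int_0^t\int_{\R}m(s,x)\,dx\,ds}$. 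In case (i), $\lambda_{\widetilde{c},\sigma}\ge0$: if $\lambda_{\widetilde{c},\sigma}>0$ the numerator tends to $0$ and the denominator is $\ge1$, while if $\lambda_{\widetilde{c},\sigma}=0$ the numerator stays bounded and the denominator grows at least linearly in $t$; either way $\rho(t)\to0$. In case (ii), $\lambda_{\widetilde{c},\sigma}<0$, so $\widehat{\rho}$ exists and lies between two positive constants; I would first show $\rho$ is eventually trapped in some $[\underline{\rho},\overline{\rho}]$ with $\underline{\rho}>0$ — the upper bound from $\overline{Q}\le d_0$, the persistence lower bound from $\int_t^{t+T}\overline{Q}\to\int_0^TQ_c>0$ — and then, setting $v=\ln(\rho/\widehat{\rho})$, obtain $v'=(\overline{Q}-Q_c)-\widehat{\rho}(e^v-1)$; since $v(e^v-1)\ge c\,v^2$ on the bounded range of $v$ and the forcing $\overline{Q}-Q_c$ vanishes, a Gronwall estimate gives $v\to0$, i.e.\ $|\rho(t)-\widehat{\rho}(t)|\to0$. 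Finally \eqref{convPerio} follows from $n=\rho\cdot\frac{n}{\rho}$, the triangle inequality, boundedness of $\rho$ and of $P_c$, the convergence $\Vert\frac{n}{\rho}-P_c\Vert_{L^\infty}\to0$, and the behaviour of $\rho$ just established.

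\emph{Main obstacle.} I expect the delicate points to be: (a) turning the $L^\infty$ convergence supplied by Proposition~\ref{Prop1} into $L^1$ control of the nonlocal quantities $\int_{\R}m\,dx$ and $\overline{Q}(t)$ uniformly in $t$, which genuinely needs a uniform‑in‑time spatial decay estimate for $m$ in the spirit of \eqref{bound_p}; and (b) the persistence lower bound $\liminf_t\rho>0$ in case (ii), where $\overline{Q}$ is only asymptotically — not pointwise — of positive average, so the bound must be extracted from its averaged positivity together with the uniform convergence $\overline{Q}-Q_c\to0$ rather than from a sign condition at each time.
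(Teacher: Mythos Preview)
Your proposal is correct and follows essentially the same route as the paper. The paper's own proof is very terse: it establishes the Liouville identity $\lambda_{\widetilde{c},\sigma}=\lambda_{0,\sigma}+\widetilde{c}^2/(4\sigma)$ (Lemma~\ref{lambda_c}), the equivalence $\lambda_{\widetilde{c},\sigma}<0\iff\widetilde{c}<\widetilde{c}^*_\sigma$ (Lemma~\ref{Relation_lb_c}), and a spatial decay bound on $n$ (Lemma~\ref{bound_n}), and then defers the remainder to Proposition~2 of \cite{Mirrahimi&Figueroa_pprint}. Your outline reconstructs precisely what that referenced argument does: pass to the linearized variable $m$, use the exponential stability \eqref{Exp_Conv_m} to control the normalized profile $n/\rho$, and run a logistic comparison for the total mass via $\rho'=\rho[\overline{Q}-\rho]$ with $\overline{Q}\to Q_c$. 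The identity $\int_0^TQ_c=-T\lambda_{\widetilde{c},\sigma}$ you derive is also used verbatim in the paper (equation \eqref{Q_eps&lambda_eps}).

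Your two flagged obstacles are exactly the right ones. Point (a) is what Lemma~\ref{bound_n} is meant to feed into: note however that the paper's bound $n\le e^{C_1-C_2|x|+C_3t}$ is \emph{not} uniform in $t$, so the uniform tail control you need for $me^{t\lambda_{\widetilde{c},\sigma}}$ must indeed be obtained separately, by combining \eqref{bound_p} for large times (via \eqref{Exp_Conv_m}) with the finite-time estimate for small times --- this is the content outsourced to \cite{Mirrahimi&Figueroa_pprint}. Point (b), the persistence lower bound on $\rho$ in case (ii), is likewise handled there through the averaged positivity $\int_0^TQ_c>0$ rather than a pointwise sign, just as you describe.
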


\begin{remark}
Note that if $\lambda_{0,\sigma} \geq 0$, then $\widetilde{c}^*_{\sigma} = 0$, which means that the population goes extinct even without {environmental} linear  change, that is $\widetilde{c} = 0$.
\end{remark}
Proposition \ref{ConvLambda} allows to relate extinction/survival of the population to the {environmental} change speed and shows that if the change goes "too fast" the population will not be able to follow the environment change and will get extinct. However, if the change speed is "moderate" the phenotypic density $n$ converges to the periodic function $n_c(t,x)=\widehat{\rho}(t)P_c(t,x)$, which is in fact the unique periodic solution of \eqref{Case_n}.\\

\noindent
Next, we are interested in describing this periodic solution $n_c$, asymptotically as the effect of mutations is small. To this end, with
a change of notation, we take $\sigma = \e^2$ and $\widetilde{c}=\e c$, and we study asymptotically the solution $(n_{\e c},\widehat{\rho}_{\e c})$ as $\e$ vanishes. {For better legibility, we also define $c^*_{\e}:=\dfrac{\widetilde{c}^*_{\e^2}}{\e}$ where $\widetilde{c}^*_{\e^2}$ stands for the critical speed $\widetilde{c}^*_\sigma$ with $\sigma=\e^2$.} Note that, in view of Proposition \ref{ConvLambda}, to provide an asymptotic analysis considering $\sigma=\e^2$ small, a rescaling of the {environmental} shift speed as $\widetilde{c}=\e c$ is necessary {(see also Theorem \ref{Asymp_Expansion}). The population can tolerate only {an environmental} shift with small speed if the mutations have small effect.}\\
 In order to keep the notation simpler we denote $(n_{\e c},\widehat{\rho}_{c\e })=(n_{\e},\rho_{\e})$, which  is the unique periodic solution of the problem:
\begin{equation}
\label{n_eps}
\left\{
\begin{array}{l}
\partial_tn_\e-\e c\partial_xn_\e-\e^2\partial_{xx}n_\e=n_\e[a({e(t)},x)-\rho_\e(t)],\quad (t,x)\in[0,+\infty)\times\R,\\
\rho_\e(t)=\dis\int_\R n_\e(t,x)dx,\\
n_\e(0,x)=n_\e(T,x).
\end{array}
\right.
\end{equation}
To study asymptotically this problem we perform a Hopf-Cole transformation (or WKB ansatz), i.e we consider 
\begin{equation}
\label{HopfCole}
n_\varepsilon=\frac{1}{\sqrt{2\pi\varepsilon}}\exp{\left(\frac{\psi_\varepsilon}{\varepsilon}\right)}.
\end{equation}
This change of variable comes from the fact that with such rescaling the solution $n_\e$ will naturally have this form. While we expect that $n_\e$ tends to a Dirac mass, as $\e\to0$, $\psi_\e$ will have a non singular limit. From this transformation \eqref{HopfCole} we deduce that $\psi_\e$ solves:
\begin{equation}
\label{Eq_psi_eps}
\frac{1}{\e}\p_t\psi_\e-\e\p_{xx}\psi_\e=\left\vert\p_x\psi_\e+\frac{c}{2}\right\vert^2+a(\SM{e(t)},x)-\frac{c^2}{4}-\rho_\e(t),\quad(t,x)\in[0,+\infty)\times\R.
\end{equation}
Here is our first main result:

\begin{theorem}{{(Asymptotic behavior)}}\\
\label{Conv_HJ}
Assume \eqref{a_W3inf}, \eqref{max_a} and \eqref{a_lambda_neg} and also that {$c<\dis\liminf_{\e\to0} c^*_{\e}$}. Then the following statements hold:
\begin{itemize}
\item[(i)] As $\e\rightarrow0$, we have $\Vert\rho_\e(t)-\widetilde{\varrho}(t)\Vert_{L^\infty}\rightarrow0$,
with $\widetilde{\varrho}(t)$ {a $T-$periodic function. }

\item[(ii)] Moreover, as $\e\rightarrow0$, $\psi_\e(t,x)$ converges locally uniformly to a function $\psi(x)\in C(\R)$, a viscosity solution to the following equation:
\begin{equation}
\label{LimitEq}
\left\{\begin{array}{rcl}
\dis-\left|\p_x \psi+\frac{c}{2}\right|^2&=&\overline{a}(x)-\overline{\rho}-\frac{c^2}{4},\quad x\in\R,\\
\dis\max_{x\in\R}\psi(x)&=&0,\\
-A_1\vert x\vert^2-\frac{c}{2}x-A_2&\leq \psi\leq& c_1-c_2\vert x\vert,
\end{array}
\right.
\end{equation}
with 
$$
\bar{\rho}=\int_0^T\widetilde{\varrho}(t)dt,
$$
for some positive constants $A_1,A_2, c_1$ and $c_2=-\frac{c}{2}+\sqrt{\delta+\frac{c^2}{2}}$.\\
\end{itemize}
\end{theorem}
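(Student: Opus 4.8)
The plan is to establish, in this order, uniform bounds on $\rho_\e$, a priori estimates on $\psi_\e$, and then a perturbed test function (homogenization) argument for \eqref{Eq_psi_eps}. First, for the bounds on $\rho_\e$: integrating \eqref{n_eps} in $x$ and using $a\le d_0$ from \eqref{a_W3inf} gives $\frac{d}{dt}\rho_\e\le\rho_\e(d_0-\rho_\e)$, hence $\rho_\e\le d_0$ for all $t$ and $\e$ by $T$-periodicity. For the lower bound, recall that $\rho_\e$ solves \eqref{Rho_equation} with the coefficient $Q_c$ — which I write $Q_{c,\e}$ to stress its $\e$-dependence — built from the Floquet eigenfunction $p_c$ of \eqref{Model_EigenvalueR} for $\sigma=\e^2$, $\widetilde c=\e c$; integrating that eigenvalue equation in $x$ (the drift and diffusion terms vanish by the decay \eqref{bound_p}) yields $\int_0^T Q_{c,\e}\,dt=-T\lambda_{\e c,\e^2}$, while $|Q_{c,\e}|\le d_0$ pointwise. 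By the definition of $c^*_\e$ and the gauge identity $\lambda_{\e c,\e^2}=\lambda_{0,\e^2}+\frac{c^2}{4}$, the hypothesis $c<\liminf_{\e\to0}c^*_\e$ forces $\lambda_{\e c,\e^2}\le-\delta_0<0$ for $\e$ small (so we are in the survival regime of Proposition~\ref{ConvLambda} and $(n_\e,\rho_\e)$ is well defined; \eqref{max_a} ensures this range of $c$ is nonempty). Then $\frac{d}{dt}\ln\rho_\e=Q_{c,\e}-\rho_\e$ has zero mean over $[0,T]$, and a Gr\"onwall-type estimate using $\int_0^T Q_{c,\e}\,dt\ge T\delta_0$ and $\rho_\e\le d_0$ gives $\rho_\e\ge\underline\rho>0$ uniformly. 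Since $|\frac{d}{dt}\rho_\e|\le 2d_0^2$, Arzel\`a--Ascoli produces, along a subsequence, a Lipschitz $T$-periodic limit $\widetilde\varrho\in[\underline\rho,d_0]$ with $\|\rho_\e-\widetilde\varrho\|_{L^\infty}\to0$ — this is (i), modulo the uniqueness of $\widetilde\varrho$ established at the end.

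Next, the a priori estimates on $\psi_\e$. From $n_\e=\rho_\e\,p_c/\int_\R p_c$ and \eqref{bound_p} — whose decay exponent, for $\sigma=\e^2$ and $\widetilde c=\e c$, is precisely $\nu=c_2/\e$ — together with the bounds on $\rho_\e$, one obtains the upper bound $\psi_\e(t,x)\le c_1-c_2|x|$ for a suitable $c_1$ and all small $\e$. Regularizing effects for \eqref{Eq_psi_eps} (a Bernstein-type gradient estimate and a second-order estimate, plus boundedness of the coefficients, as in \cite{Mirrahimi&Figueroa_pprint}) provide a uniform-in-$\e$ local Lipschitz bound on $\psi_\e$ in $x$ and a local bound on $\e\p_{xx}\psi_\e$; using the former together with $\underline\rho\le\rho_\e=\int_\R n_\e\le d_0$ and a Laplace-type estimate forces $\max_x\psi_\e(t,x)\to0$ uniformly in $t$, hence also a uniform local lower bound on $\psi_\e$. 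A subsolution of \eqref{Eq_psi_eps} of the form $-A_1|x|^2-\frac c2 x-A_2$ on $\{|x|\ge R_0\}$, matched to that local lower bound, yields the global bound $\psi_\e\ge-A_1|x|^2-\frac c2 x-A_2+o(1)$. Finally, rewriting \eqref{Eq_psi_eps} as $\p_t\psi_\e=\e\big[\e\p_{xx}\psi_\e+|\p_x\psi_\e+\frac c2|^2+a(e(t),x)-\frac{c^2}{4}-\rho_\e\big]$ and using the bound on $\e\p_{xx}\psi_\e$ gives $\p_t\psi_\e=O(\e)$ locally uniformly.

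By these estimates and Arzel\`a--Ascoli one extracts a further subsequence along which $\psi_\e\to\psi$ locally uniformly in $(t,x)$, with $\psi$ locally Lipschitz, and $\rho_\e\to\widetilde\varrho$; since $\p_t\psi_\e=O(\e)$, the limit $\psi$ is independent of $t$, and $\max_x\psi=0$ and the growth bounds in \eqref{LimitEq} pass to the limit. To identify the equation I use the perturbed test function method: for a smooth $\phi=\phi(x)$ with $\psi-\phi$ having a strict local maximum at $x_0$, one introduces a $T$-periodic (in $t$) corrector $\chi^\phi$ absorbing the $O(\e)$ temporal oscillations, whose period-compatibility (solvability) condition is exactly the averaged equation; evaluating \eqref{Eq_psi_eps} at a maximum point of $\psi_\e-\phi-\e\chi^\phi$, letting $\e\to0$, and using $\|\rho_\e-\widetilde\varrho\|_{L^\infty}\to0$ and the second-order condition at the maximum, one gets $-|\p_x\phi(x_0)+\frac c2|^2\le\overline a(x_0)-\overline\rho-\frac{c^2}{4}$; the supersolution inequality is obtained symmetrically at a minimum. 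Thus $\psi$ is a viscosity solution of \eqref{LimitEq}. Since $\overline\rho$ (the ergodic constant of \eqref{LimitEq} under the prescribed growth) and the profile $\widetilde\varrho$ (the $T$-periodic solution of \eqref{Rho_equation}, cf. Proposition~\ref{prop2}) are uniquely determined by the limit problem, all subsequences share the same limit, the full family converges, and both (i) and (ii) follow.

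The main obstacles are twofold. First, the uniform lower bound on $\rho_\e$: it has to be squeezed out of $c<\liminf c^*_\e$ through the eigenvalue identity above and then converted, via the explicit periodic formula \eqref{Exp_rho_var} (or a Gr\"onwall argument), into an $L^\infty$ bound uniform in $\e$. Second, the homogenization step must be performed at the level of viscosity solutions, since $\psi_\e$ is not known to be $C^2$ uniformly and the corrector $\chi^\phi$ is only Lipschitz in $t$ — so $\chi^\phi$ has to be mollified in $t$, at the cost of an error that is removed after letting $\e\to0$. The genuinely new feature with respect to \cite{Mirrahimi&Figueroa_pprint} lies entirely in the $c$-terms: they make the decay exponent $\nu$ and all the barriers asymmetric and introduce the shift $-\frac c2 x$ in the estimates, while the structure of the argument is unchanged.
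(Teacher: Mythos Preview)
Your overall architecture matches the paper's: uniform bounds on $\rho_\e$, a priori estimates on $\psi_\e$, Arzel\`a--Ascoli, then a perturbed test function argument with a periodic corrector. The treatment of $\rho_\e$ and the homogenization step are essentially as in the paper (the corrector here is in fact smooth and does not depend on the test function, so no mollification is needed).

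There are, however, two genuine gaps in your regularity estimates.

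\textbf{Time equicontinuity.} You propose to get $\p_t\psi_\e=O(\e)$ locally by first securing ``a local bound on $\e\p_{xx}\psi_\e$'' and then reading the time derivative off \eqref{Eq_psi_eps}. No such second-order bound is established, and it is not a by-product of a Bernstein gradient estimate. Worse, from \eqref{Eq_psi_eps} itself a two-sided local bound on $\e\p_{xx}\psi_\e$ is \emph{equivalent} to a two-sided bound on $\tfrac{1}{\e}\p_t\psi_\e$, so the argument is circular unless you provide an independent second-order estimate; for a convex Hamiltonian one may hope for one-sided semiconcavity, but not the matching lower bound. The paper does not attempt this route: it proves equicontinuity in $t$ directly by the barrier method of Barles \cite{BarlesEqui}, showing that $\widehat\xi(t,y)=\psi_\e(s,x)+\eta+\Lambda_1|y-x|^2+\e\Lambda_2(t-s)$ is a supersolution of \eqref{Eq_psi_eps} on $[s,T]\times B_R$ dominating $\psi_\e$ on the parabolic boundary, hence $|\psi_\e(t,x)-\psi_\e(s,x)|\le\eta+\e\Lambda_2(t-s)$, which yields \eqref{Equi_u} with no second-derivative information whatsoever.

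\textbf{Ordering of the lower bound and the Lipschitz estimate.} You run the Lipschitz bound first and then derive a local lower bound on $\psi_\e$ from $\max_x\psi_\e\to0$, extended outward by a quadratic barrier. But the Bernstein argument ``as in \cite{Mirrahimi&Figueroa_pprint}'' (and as carried out in the paper) is performed on $\phi_\e=\sqrt{2c_1-\psi_\e}$ and \emph{uses} the quadratic lower bound on $\psi_\e$ to control the drift coefficient $\bigl|\tfrac{\e}{\phi_\e}-2\phi_\e\bigr|\le A_4|x|+A_5$; without a lower bound on $\psi_\e$ this coefficient is uncontrolled and the estimate does not close. Your ordering is therefore circular. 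The paper avoids this by proving the quadratic lower bound first, via comparison with the heat kernel after a Liouville transformation (a step that uses no Lipschitz information), and only then running the Bernstein argument.

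A minor point: the uniqueness of $(\psi,\widetilde\varrho,\overline\rho)$ is not part of Theorem~\ref{Conv_HJ}; it is the content of Theorem~\ref{Unique_Identif} and requires the additional hypotheses \eqref{x_m}--\eqref{x_var}. At the level of Theorem~\ref{Conv_HJ} the convergence is only along subsequences.
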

The above theorem is closely related to Theorem 4 in \cite{Mirrahimi&Figueroa_pprint}. A new difficulty comes from the drift term. To deal with the drift term we use a Liouville transformation (see for instance \cite{Berestycki_Rossi1, Berestycki_Rossi2}) that allows us to transform the problem to a parabolic problem without drift.\\

\noindent
{To present our next result,} let us consider the eigenproblem \eqref{Model_EigenvalueR}  for $\sigma=\e^2$ and $\widetilde{c}=c\e$, that is:
\begin{equation}
\label{Eigenvalue_lambda_ceps}
\left\{\begin{array}{cr}
\partial_tp_{c\e}-\e c\partial_xp_{c\e}-\e^2\partial_{xx}p_{c\e}-a({e(t)},x)p_{c\e}=p_{c\e}\lambda_{c,\e} ,& (t,x)\in[0,+\infty)\times\R,\\
0<p_{c\e}; \ p_{c\e}(t,x)=p_{c\e}(t+T,x), & (t,x)\in[0,+\infty)\times\R.
\end{array}
\right.
\end{equation}
Here we denote $\lambda_{c,\e}$ the eigenvalue $\lambda_{\widetilde{c},\sigma}$ with $\sigma=\e^2$ and $\widetilde{c}=c\e$ for better legibility.

\begin{theorem}{{(Uniqueness and identification of the solution)}}\\
\label{Unique_Identif}
Let $\lambda_{c,\e}$ be the principal eigenvalue of problem \eqref{Eigenvalue_lambda_ceps} and assume \eqref{a_W3inf}, \eqref{x_m}, \eqref{x_var} and \eqref{a_lambda_neg}. Assume in addition that {$c<\dis\liminf_{\e\to0} c^*_{\e}$}, then the following statements hold:
\begin{itemize} 
\item[(i)]{Let $\overline{\rho}_{\e}=\frac{1}{T}\int_0^T\rho_{\e}(t)dt$,  then $\overline{\rho}_{\e}=-\lambda_{c,\e}$.}
\item[(ii)] The viscosity solution of \eqref{LimitEq} is unique and it is indeed a classical solution given by
\begin{equation}
\label{Exp_Sol_psi}
\psi(x)=\frac{c}{2}(\overline{x}-x)+ \int_{\overline{x}}^{x_m}\sqrt{\overline{a}(x_m)-\overline{a}(y)}dy-\left\vert \int_{x_m}^{x}\sqrt{\overline{a}(x_m)-\overline{a}(y)}dy\right\vert.
\end{equation} 
where $\overline{x}<x_m$ is given in \eqref{x_var}. {Moreover, as $\e\to 0$, $\overline \rho_\e$ converges to $\overline \rho=\overline a(\overline x)$.}
\item[(iii)] Furthermore, let $n_\varepsilon$ solve \eqref{n_eps}, then
\begin{equation}
\label{Dirac_mass_conv}
n_\e(t,x)-\widetilde{\varrho}(t)\delta(x-\overline{x})\rightharpoonup 0,\quad\text{as } \e\to0,
\end{equation}
point wise in time, weakly in $x$ in the sense of measures, {with $\widetilde{\varrho}$ the unique periodic solution of the following equation
\begin{equation}
\label{eq_varrho}
\left\{\begin{array}{l}
\dfrac{d\widetilde{\varrho}}{dt}=\widetilde{\varrho}\left[a({e(t)},\bar x)-\widetilde{\varrho}\right],\quad t\in(0,T),\\
\widetilde{\varrho}(0)=\widetilde{\varrho}(T).
\end{array}\right.
\end{equation}}
\end{itemize}
\end{theorem}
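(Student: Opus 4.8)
The plan is to treat the three items in order, each building on the previous. For \textbf{(i)}, I would integrate equation \eqref{n_eps} in $x$ over $\R$, using that $n_\e$ decays exponentially (inherited from the bound \eqref{bound_p} on $p_c$ via Proposition \ref{ConvLambda}), to get $\frac{d}{dt}\rho_\e = \rho_\e(Q_{c\e}(t)-\rho_\e)$ where $Q_{c\e}$ is the $T$-periodic weight defined in \eqref{PyQ} with $\e$-dependent eigenfunction. Dividing by $\rho_\e$, integrating over one period and using periodicity $\rho_\e(0)=\rho_\e(T)$ gives $\frac1T\int_0^T Q_{c\e}(t)\,dt = \frac1T\int_0^T\rho_\e(t)\,dt = \overline\rho_\e$. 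Then I would show $\frac1T\int_0^T Q_{c\e}(t)\,dt = -\lambda_{c,\e}$ by testing the eigenvalue equation \eqref{Eigenvalue_lambda_ceps} against $1$ (i.e. integrating in $x$): the drift and diffusion terms integrate to zero by the decay of $p_{c\e}$, leaving $\partial_t\!\int p_{c\e}\,dx - \int a\,p_{c\e}\,dx = \lambda_{c,\e}\int p_{c\e}\,dx$; dividing by $\int p_{c\e}\,dx$, integrating over $[0,T]$ and using $T$-periodicity of $p_{c\e}$ kills the $\partial_t\log\!\int p_{c\e}$ term, yielding $-\frac1T\int_0^T Q_{c\e} = \lambda_{c,\e}$. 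This gives (i).

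For \textbf{(ii)}, the candidate \eqref{Exp_Sol_psi} should first be checked directly to solve the Hamilton-Jacobi equation in \eqref{LimitEq} with $\overline\rho = \overline a(\overline x)$: away from $x_m$ one computes $\partial_x\psi$ and verifies $|\partial_x\psi + \frac c2|^2 = \overline a(x_m) - \overline a(x) = \overline\rho - \overline a(x) + \frac{c^2}{4}$ using \eqref{x_var}, which rearranges to the stated equation; at $x_m$ the two branches match with a concave (viscosity-admissible) kink. The constraint $\max\psi = 0$ holds because $\psi(\overline x) = 0$ by inspection and $\psi$ is shown to be maximized there (the $\frac c2(\overline x - x)$ term and the integral terms are arranged precisely so). For uniqueness, I would argue that any viscosity solution $\psi$ of \eqref{LimitEq} must, on each side of $x_m$, solve a first-order ODE $|\psi' + \frac c2| = \sqrt{\overline a(x_m)-\overline a}$ whose admissible (non-increasing-slope-at-kinks) solution is determined once we know where $\max\psi$ is attained; using assumptions \eqref{x_m}, \eqref{x_var} and the growth bound in \eqref{LimitEq}, the maximum point is forced to be $\overline x$, pinning down $\psi$. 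Since $\psi$ is then given by an explicit smooth formula (with a single corner that is actually $C^1$ when $\overline a'(x_m)=0$, hence classical in the relevant sense), item (ii) follows; convergence $\overline\rho_\e \to \overline a(\overline x)$ comes by combining (i) with the known asymptotics $\lambda_{c,\e} \to -\overline a(\overline x)$ (this last limit being the $\e\to 0$ behavior of the Floquet eigenvalue, which I would justify by a sub/supersolution comparison for \eqref{Eigenvalue_lambda_ceps} analogous to the Hamilton-Jacobi analysis).

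For \textbf{(iii)}, since Theorem \ref{Conv_HJ} already gives $\psi_\e \to \psi$ locally uniformly with $\{\psi = 0\} = \{\overline x\}$ (by the uniqueness in (ii)), the Hopf-Cole form \eqref{HopfCole} together with the $L^1$-mass $\rho_\e(t)\to\widetilde\varrho(t)$ forces $n_\e(t,\cdot)$ to concentrate as a Dirac at $\overline x$ with weight $\widetilde\varrho(t)$: any weak-$*$ limit is supported at the unique zero of $\psi$, and the mass is $\lim\rho_\e(t) = \widetilde\varrho(t)$. Passing to the limit in the $\rho_\e$-ODE from (i), using $Q_{c\e}(t)\to a(e(t),\overline x)$ (because $P_{c\e}(t,\cdot)$ concentrates at $\overline x$), identifies $\widetilde\varrho$ as the unique periodic solution of \eqref{eq_varrho}; uniqueness there follows from Proposition \ref{prop2} with $Q_c$ replaced by $a(e(\cdot),\overline x)$, whose period-integral $\int_0^T a(e(t),\overline x)\,dt = T\,\overline a(\overline x) > 0$ under our standing assumptions.

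\textbf{Main obstacle.} The delicate point is the uniqueness of the viscosity solution of \eqref{LimitEq} and, tied to it, the identification that the maximum is attained exactly at $\overline x$ — this is where the linear-shift term $\frac c2 x$ genuinely changes the picture compared to \cite{Mirrahimi&Figueroa_pprint}, since the effective Hamiltonian is no longer symmetric and one must use \eqref{x_var} to locate the constrained maximum correctly. I also expect the justification of $\lambda_{c,\e}\to -\overline a(\overline x)$ (needed for the convergence of $\overline\rho_\e$) to require some care, as it is essentially an asymptotic expansion of a Floquet eigenvalue; one route is to use the Liouville transformation mentioned after Theorem \ref{Conv_HJ} to remove the drift and then invoke the corresponding no-shift result.
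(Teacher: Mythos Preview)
Your outline for (i) and (iii) matches the paper's argument closely and is correct. The substantive gap is in (ii), specifically in the uniqueness argument and the identification of $\overline\rho$.

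You propose to first verify that the candidate \eqref{Exp_Sol_psi} solves \eqref{LimitEq} with $\overline\rho = \overline a(\overline x)$, and then argue uniqueness by saying that on each side of $x_m$ the viscosity solution solves a first-order ODE whose admissible branch is fixed once the maximum point is known. The missing step is that $\overline\rho$ is not given a priori: it is the limit (along subsequences) of $\overline\rho_\e$ from Theorem \ref{Conv_HJ}, and nothing in your argument forces $\overline\rho + \tfrac{c^2}{4} = \overline a(x_m)$. If this equality failed, i.e.\ if $\overline\rho + \tfrac{c^2}{4} > \overline a(x_m)$, then the right-hand side $\overline a(x) - \overline\rho - \tfrac{c^2}{4}$ would be strictly negative everywhere, $\psi' + \tfrac c2$ would never vanish, and there would be no distinguished point $x_m$ around which to organize your ODE argument or write down the candidate formula. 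The paper handles exactly this: after the Liouville substitution $u(x)=\psi(x)+\tfrac c2 x$, it invokes Lions' representation formula for viscosity solutions of $|u'|=\sqrt{-h}$ on $[-R',R']$ and combines it with the upper bound $u(x)\le c_1 - c_2|x| + \tfrac c2 x$ (with the specific constant $c_2 = -\tfrac c2 + \sqrt{\delta + \tfrac{c^2}{2}}$ coming from \eqref{a_lambda_neg}) to show that if $h<0$ everywhere then $u(0)\to -\infty$ as $R'\to\infty$, a contradiction. This is precisely where the third line of \eqref{LimitEq} is used, and your sketch omits it. The same representation-formula device is then used again to rule out the boundary contributions at $\pm R'$ and pin the solution to its value at $x_m$; your ``admissible kink'' heuristic does not obviously replace this, since viscosity solutions of eikonal equations on the line are not unique without boundary or growth information.

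Relatedly, your proposed route to $\overline\rho_\e \to \overline a(\overline x)$ via an independent eigenvalue asymptotic is backwards relative to the paper's logic: the paper deduces $\overline\rho = \overline a(\overline x)$ as a \emph{consequence} of the uniqueness analysis (from $\overline a(x_m)=\overline\rho + \tfrac{c^2}{4}$ established above, together with $\overline a(x^*)=\overline\rho$ at any maximum point $x^*$ of $\psi$ and assumption \eqref{x_var}, which forces $x^*=\overline x$). This identifies every subsequential limit of $-\lambda_{c,\e}=\overline\rho_\e$ uniquely and hence yields convergence of the full sequence. You should reorganize (ii) so that the value of $\overline\rho$ is \emph{derived} from the constraint and the bounds, not assumed.
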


\begin{remark}
The statement $(iii)$ in Theorem \ref{Unique_Identif} implies, for the solution $\widetilde{n}_\e$ to the initial problem \eqref{GeneralCase_ntilde} with $\sigma=\e^2$ and $\widetilde c=c\e$, that
\begin{equation}
\label{Dirac_mass_n_tilde}
\widetilde{n}_\e(t,x)-\widetilde{\varrho}(t)\delta(x-\overline{x}-ct)\rightharpoonup 0, \quad \text{ as } \e\to 0,
\end{equation}
point wise in time, weakly in $x$ in the sense of measures. This implies that the phenotypic density of the population concentrates on a {dominant trait which follows the optimal trait with the same speed but with a constant lag $x_m-\overline x$.}
\end{remark} 
\\
{Note that while in \cite{Mirrahimi&Figueroa_pprint} the uniqueness of the viscosity solution to the corresponding Hamilton-Jacobi equation with constraint was immediate, here to prove the uniqueness of the viscosity solution more work is required. In particular, in order to prove such result the constraint is not enough and we use also the bounds on $\psi$, given in \eqref{LimitEq}. More precisely we introduce a new function $u(x)=\psi(x)+\frac{c}{2}x$ which solves
\begin{equation}
\label{Comp_Pb_u}
\left\{
\begin{array}{c}
-\vert \p_x u\vert^2=\overline{a}(x)-\overline{\rho}-\frac{c^2}{4},\quad x\in\R,\\\\
{\max_{x\in \R} u(x)-\f{c}{2}x=0,}\\\\
-A_1\vert x\vert^2-A_2\leq u(x)\leq c_1-c_2\vert x\vert+\frac{c}{2}x,
\end{array}
\right.
\end{equation}
where the constants $A_1$, $A_2$, $c_1$, $c_2$ are the same as in \eqref{LimitEq}.\\
The main idea comes from the fact that any viscosity solution to a Hamilton-Jacobi equation of type \eqref{Comp_Pb_u} but in a bounded domain $\Omega$ can be uniquely determined by its values on the boundary points of $\Omega$ and by its values at the maximum points of the RHS of the Hamilton-Jacobi equation \cite{Lions}. \\

\noindent
{Finally, in our last result we   provide an asymptotic expansion} for the Floquet eigenvalue which leads to an asymptotic expansion for the critical speed $c^*_\e$ and the average size of the population $\bar \rho_\e$.
\begin{theorem}{(Asymptotic expansions)}\\
\label{Asymp_Expansion}
Let $\lambda_{c,\e}$ be the principal eigenvalue of problem \eqref{Eigenvalue_lambda_ceps} and assume \eqref{a_W3inf}, \eqref{x_m} and \eqref{a_lambda_neg}. Assume in addition that {$c<\dis\liminf_{\e\to0} c^*_{\e}$}, then the following asymptotic expansions hold
\begin{equation}
\label{Eigenvalue_Approx}
\overline{\rho}_{\e}=-\lambda_{c,\e}=\overline{a}(x_m)-\frac{c^2}{4}-\e \sqrt{-\overline{a}_{xx}(x_m)/2}+o(\e),
\end{equation}
\begin{equation}
\label{Crit_Speed_Approx}
c^*_{\e}=2\sqrt{\overline{a}(x_m)}-\e {\sqrt{-\f{\overline{a}_{xx}(x_m)}{2\, \overline a(x_m)}}}+o(\e).
\end{equation}
\end{theorem}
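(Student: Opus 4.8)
The starting point is the identity $\overline\rho_\e=-\lambda_{c,\e}$ of Theorem \ref{Unique_Identif}(i), which holds under the present hypotheses since its proof needs only that $\widetilde p(t,x):=n_\e(t,x)\exp\!\big(\int_0^t(\rho_\e(s)-\overline\rho_\e)\,ds\big)$ is a positive $T$-periodic eigenfunction with eigenvalue $-\overline\rho_\e$, hence equals the principal eigenfunction of Proposition \ref{Prop1}. Thus it suffices to expand $\lambda_{c,\e}$. I would first eliminate the drift exactly through the Liouville change $p_{c\e}(t,x)=e^{-\frac{c}{2\e}x}q_\e(t,x)$, which turns \eqref{Eigenvalue_lambda_ceps} into the drift-free periodic eigenproblem with potential $a(e(t),x)-\tfrac{c^2}{4}$; running this through the Dirichlet truncations in the proof of Proposition \ref{Prop1} (the Liouville map preserves both the vanishing boundary values and the eigenvalue, and $q_\e$ retains exponential decay by \eqref{a_lambda_neg}) yields $\lambda_{c,\e}=\lambda_{0,\e}+\tfrac{c^2}{4}$, where $\lambda_{0,\e}$ denotes the principal periodic eigenvalue of $\partial_t-\e^2\partial_{xx}-a(e(t),x)$ on $\R$ (this is the eigenvalue entering $\widetilde c^*_{\e^2}$). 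Consequently $\overline\rho_\e=-\lambda_{0,\e}-\tfrac{c^2}{4}$, so \eqref{Eigenvalue_Approx} amounts to $-\lambda_{0,\e}=\overline a(x_m)-\e\sqrt{-\overline a_{xx}(x_m)/2}+o(\e)$; and, granted this, \eqref{Crit_Speed_Approx} is algebra: $\lambda_{0,\e}<0$ for $\e$ small, so $c^*_\e=\widetilde c^*_{\e^2}/\e=2\sqrt{-\lambda_{0,\e}}$, and expanding the square root produces the coefficient $\sqrt{-\overline a_{xx}(x_m)/(2\overline a(x_m))}$.

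Next I would average out the time oscillation. Let $\Theta(t,x)=\int_0^t\big(a(e(s),x)-\overline a(x)\big)\,ds$; by \eqref{a_W3inf} it is $T$-periodic in $t$ and bounded together with $\Theta_x$ and $\Theta_{xx}$. The substitution $q_\e=e^{\Theta}w_\e$ then shows that $\lambda_{0,\e}$ is the principal periodic eigenvalue of
\[
\partial_t-\e^2\partial_{xx}-2\e^2\Theta_x\,\partial_x-\e^2(\Theta_{xx}+\Theta_x^2)-\overline a(x),
\]
whose only time dependence now sits in the two terms of order $\e^2$. I claim these do not touch the $\e$-term: the zeroth-order piece shifts the principal eigenvalue by $O(\e^2)$ by monotone comparison, and the first-order piece, being essentially skew-symmetric, also contributes $O(\e^2)$ --- in the test-function estimates below it is absorbed by an integration by parts against the exponentially localised (quasi)eigenfunction, producing the bounded potential $\e^2\Theta_{xx}$. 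Hence $\lambda_{0,\e}=\lambda_\e^{\mathrm{el}}+o(\e)$, where $\lambda_\e^{\mathrm{el}}$ is the principal eigenvalue of the one-dimensional Schr\"odinger operator $-\e^2\partial_{xx}-\overline a(x)$ on $\R$.

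It then remains to prove the semiclassical expansion $-\lambda_\e^{\mathrm{el}}=\overline a(x_m)-\e\,\omega+o(\e)$, with $\omega:=\sqrt{-\overline a_{xx}(x_m)/2}$, at the non-degenerate maximum $x_m$ of $\overline a$ provided by \eqref{x_m} (I take $\overline a_{xx}(x_m)<0$; for $\overline a_{xx}(x_m)=0$ the statement is a formal limit). Averaging \eqref{a_lambda_neg} over a period gives $\overline a(x)\le\overline a(x_m)-\delta$ for $|x|\ge R_0$, so the ground state concentrates at $x_m$. For the upper bound on $-\lambda_\e^{\mathrm{el}}$ (i.e.\ the lower bound on $\overline\rho_\e$) I would insert the Gaussian quasimode $u_\e(x)=\chi(x)\exp\!\big(-\tfrac{\omega}{2\e}(x-x_m)^2\big)$, with $\chi$ a fixed cutoff equal to $1$ near $x_m$, into the elliptic Rayleigh quotient, Taylor-expanding $\overline a$ to second order; since the mass sits on the scale $\sqrt\e$ the remainder is $O(\e^{3/2})=o(\e)$. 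For the matching lower bound on $-\lambda_\e^{\mathrm{el}}$ (i.e.\ the upper bound on $\overline\rho_\e$) I would construct a positive supersolution $e^{-\varphi_\e/\e}$ of $-\e^2\partial_{xx}-\overline a-\big(-\overline a(x_m)+\e\omega-o(\e)\big)$ on all of $\R$, with $\varphi_\e(x)=\tfrac{\omega}{2}(x-x_m)^2$ on $|x-x_m|\le M\sqrt\e$, $(\varphi_\e')^2=\overline a(x_m)-\overline a$ (the Agmon eikonal already visible in \eqref{Exp_Sol_psi}) in an intermediate zone, and $\varphi_\e$ affine with slope $\sqrt{\delta/2}$ for $|x|\ge R_0$, glued in $C^1$; the supersolution inequality follows from $\overline a\le\overline a(x_m)$ everywhere together with the gap $\delta$ in the outer zone, and the $\e\omega$ emerges from $\varphi_\e''(x_m)=\omega$ in the Gaussian window. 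This delivers the expansion of $\lambda_\e^{\mathrm{el}}$, and with the two reductions above, the theorem.

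The main obstacle is precisely this matched-asymptotics supersolution: the parabolic window, the eikonal middle zone and the affine outer zone must be spliced across a band of width $\sim\sqrt\e$ without spoiling the $o(\e)$ accuracy of the bound, and at the same time the contribution $\e\,\Theta_x\,\varphi_\e'$ of the residual first-order term --- which is only $O(\e)$ pointwise away from $x_m$ --- has to be absorbed into the slack supplied by \eqref{a_lambda_neg}. The remaining ingredients (the exact Liouville reduction, the harmonic-approximation computation for the quasimode, and the $O(\e^2)$ control of the lower-order terms) are either exact or classical.
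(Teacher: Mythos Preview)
Your overall route --- Liouville reduction to $c=0$, then a time-averaging substitution $q_\e=e^{\Theta}w_\e$ to reduce to the elliptic Schr\"odinger operator $-\e^2\partial_{xx}-\overline a$, then the harmonic approximation --- is different from the paper's and is appealing, but Step~2 has a genuine gap. After the substitution you obtain
\[
\partial_t-\e^2\partial_{xx}-2\e^2\Theta_x\,\partial_x-\e^2(\Theta_{xx}+\Theta_x^2)-\overline a(x),
\]
and you claim the first-order piece $-2\e^2\Theta_x\partial_x$ shifts the periodic principal eigenvalue only by $O(\e^2)$. This is where the argument breaks. The parabolic Floquet eigenvalue has no Rayleigh quotient, so the ``skew-symmetric, integrate by parts'' mechanism is unavailable; the only tool here is pointwise comparison with sub/supersolutions (this is also what the paper uses). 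But against any time-independent test function $e^{-\varphi_\e/\e}$ the drift term produces the pointwise contribution $2\e\,\Theta_x(t,x)\,\varphi_\e'(x)$, which is $O(\e)$ and genuinely time-dependent in the whole intermediate zone $\{|x-x_m|\gtrsim\sqrt\e,\ |x|\le R_0\}$. There is no slack there: on the eikonal branch $(\varphi_\e')^2=\overline a(x_m)-\overline a$ exactly, and the supersolution inequality reduces to $\varphi_\e''(x)-2\Theta_x(t,x)\varphi_\e'(x)\ge\omega+o(1)$ for \emph{all} $t$, which fails as soon as $\Theta_x$ oscillates. The gap in \eqref{a_lambda_neg} only helps for $|x|\ge R_0$, so it cannot absorb this term.

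The paper circumvents exactly this difficulty by keeping the problem parabolic and building the approximate eigenfunction as $\widetilde p_\e=\exp\big((\psi+\e\phi)/\e\big)$ with a \emph{time-dependent} corrector $\phi$. Writing $\phi(t,x)=\phi(0,x)+\Theta(t,x)$, the $\Theta$-part is precisely your averaging substitution; what you are missing is the stationary piece $\phi(0,x)$, determined by the second equation of \eqref{System_approx},
\[
-\psi_{xx}=(2\psi_x+c)\,\overline{\partial_x\phi}+\lambda_2,
\]
which is a transport equation whose role is exactly to cancel the $O(\e)$ drift error above. The technical heart of the paper's proof is then Lemma~\ref{Borne_operat}: showing $\||\phi_x|^2+\phi_{xx}\|_{L^\infty}<\infty$ despite the apparent singularity of $\phi(0,\cdot)$ at $x_m$ (where $2\psi_x+c$ vanishes), and reading off $\lambda_2=\sqrt{-\overline a_{xx}(x_m)/2}$ from the solvability condition at $x_m$. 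Once this $K$ is finite, $\widetilde p_\e e^{-t(\widetilde\lambda_\e\pm K\e^2)}$ are global sub/supersolutions of the linear problem, and comparison with the exponentially stable eigenfunction (Proposition~\ref{Prop1}) gives $|\lambda_{c,\e}-\widetilde\lambda_\e|\le K\e^2$.

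In short: your Liouville reduction and your treatment of the elliptic semiclassical step are fine and parallel to the paper, but to bridge the parabolic and elliptic eigenvalues at order $\e$ you cannot avoid introducing a stationary first-order corrector in $x$ --- which amounts to the paper's construction of $\phi(0,x)$ and the boundedness analysis of Lemma~\ref{Borne_operat}.
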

Note that the expansion for the Floquet eigenvalue is indeed related to the harmonic approximation of the   {ground state energy} of the Schr\"odinger operator \cite{Helffer}. However, here we have a parabolic, non self-adjoint operator}.\\
{In Section \ref{Bio_example} we study an illuminating biological example and show thanks to the above result that the fluctuations of the environment may help the population to follow the {environmental} shift.}\\

 \noindent
The paper is organized as follows: in Section \ref{Long_time} we deal with the long time study of the problem and prove the preliminary results Proposition \ref{Prop1} and Proposition \ref{ConvLambda}.
Next in Section \ref{Asymp_Analysis} we provide an asymptotic analysis of the problem considering small effect of mutations and we prove Theorem \ref{Conv_HJ}. {In Section \ref{Uniqueness}, we obtain the uniqueness of the viscosity solution to \fer{LimitEq} and  prove Theorem \ref{Unique_Identif}.} Section \ref{Eigen_Approx} is devoted to {the approximations} of the principal eigenvalue (average size of the population) and the critical speed, given in Theorem \ref{Asymp_Expansion}. {In Section \ref{Bio_example} we study a biological example   and discuss the effect of the fluctuations on the critical speed of survival and on the phenotypic distribution of the population. Finally}, in Appendix A and B, we provide some technical results and computations. 

\section{The convergence in long time}
\label{Long_time}

In this section we provide the proofs of Proposition \ref{Prop1} and Proposition \ref{ConvLambda}. To this end, we make a change of variable which allows us to transform the problem into a parabolic equation without the drift term.\\
Let $m(t, x)$ satisfy the  linearized problem \eqref{modelLap_in_m}, we denote $\mathcal{P}_0$ and $\mathcal{P}_c$  the linear operators associated to problem \eqref{modelLap_in_m}, for $\widetilde{c}=0$ and $\widetilde{c}>0$ respectively, that is:
\begin{equation}
\label{P0&Pc}
\mathcal{P}_0\omega:=\p_t\omega-\sg \p_{xx}\omega-a(\SM{e(t)},x)\omega,\qquad
\mathcal{P}_c\omega:=\p_t\omega-\widetilde{c}\p_x\omega-\sg \p_{xx}\omega-a(\SM{e(t)},x)\omega.
\end{equation}
In Subsection \ref{Liouville}, we introduce the Liouville transformation and provide a relation between $\mathcal{P}_0$ and  $\mathcal{P}_c$ which allows us to obtain a relationship between $\widetilde{c}$ and $\lambda_{\widetilde{c},\sigma}$. Next in Subsection \eqref{Proof_Prop1} and \eqref{Proof_Prop4} we provide the proofs of Proposition \ref{Prop1} and Proposition \ref{ConvLambda} respectively.

\subsection{Liouville transformation.}
\label{Liouville}
Here, we reduce the parabolic equation \eqref{Case_n} to a parabolic problem without the drift term via a Liouville transformation (see for instance \cite{Berestycki_Rossi1, Berestycki_Rossi2} where this transformation is used for an elliptic problem).\\
Let $M(t,x)$ be given by 
\begin{equation}
\label{Liouville_transf}
M(t,x):=m(t,x)e^{\frac{\widetilde{c}}{2\sigma}x} ,
\end{equation}
for $m(t,x)$ the solution of the linearized problem \eqref{modelLap_in_m}, then $M$ satisfies:
\begin{equation}
\label{Liouville_Eq}
\p_tM-\sg \p_{xx}M=\Big[a(\SM{e(t)},x)-\frac{\widetilde{c}^2}{4\sg}\Big]M.
\end{equation} 
We denote $\mathcal{\widetilde{P}}$ the linear operator associated to the above equation, i.e.
$$
\mathcal{\widetilde{P}}\omega:=\p_t\omega-\sg \p_{xx}\omega-a_c(\SM{e(t)},x)\omega,
$$ 
where $a_c(\SM{e(t)},x)=\left[a(\SM{e(t)},x)-\frac{\widetilde{c}^2}{4\sg}\right]$.\\
We establish in the next lemma the relation between the principal eigenvalues associated to the operators $\mathcal{P}_0$, $\mathcal{P}_c$ and $\mathcal{\widetilde{P}}$. 
\begin{lemma}
\label{lambda_c}
Let $\lambda(\mathcal{P},\mathbb{D})$ denote the principal eigenvalue of the operator $\mathcal{P}$ in the domain $\mathbb{D}$, it holds
$$
\lambda_{\widetilde{c},\sigma}=\lambda\left(\mathcal{P}_c,\R_+\times\R\right)=\lambda\left(\mathcal{\widetilde{P}},\R_+\times\R\right).
$$
Moreover, let $\lambda_{0,\sigma}=\lambda(\mathcal{P}_0,\R_+\times\R)$, then
$\lambda_{\widetilde{c},\sigma}=\lambda_{0,\sigma}+\frac{\widetilde{c}^2}{4\sg}$.
\end{lemma}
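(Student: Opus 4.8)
The plan is to exploit the two change-of-variables identities already set up. First, for the equality $\lambda(\mathcal{P}_c,\R_+\times\R)=\lambda(\widetilde{\mathcal P},\R_+\times\R)$: the Liouville substitution $M(t,x)=m(t,x)e^{\frac{\widetilde c}{2\sigma}x}$ is an explicit bijection between solutions of \eqref{modelLap_in_m} and solutions of \eqref{Liouville_Eq}, and it maps a $T$-periodic positive eigenfunction $p_c$ of $\mathcal P_c$ (associated with $\lambda_{\widetilde c,\sigma}$) to a positive $T$-periodic eigenfunction $P:=p_c e^{\frac{\widetilde c}{2\sigma}x}$ of $\widetilde{\mathcal P}$ with the \emph{same} eigenvalue, since the weight $e^{\frac{\widetilde c}{2\sigma}x}$ is time-independent and the eigenvalue relation $\mathcal P_c p_c=\lambda_{\widetilde c,\sigma}p_c$ transforms into $\widetilde{\mathcal P}P=\lambda_{\widetilde c,\sigma}P$ by a direct computation (the first-order term $-\widetilde c\,\partial_x$ is absorbed, leaving the potential shift $-\frac{\widetilde c^2}{4\sigma}$, exactly as in \eqref{Liouville_Eq}). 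I would make this rigorous first at the level of the bounded-domain problems \eqref{Model_EigenvalueBR} on $[-R,R]$, where uniqueness of the principal eigenpair is guaranteed by \cite{hess}: the transform carries the Dirichlet problem for $\mathcal P_c$ on $[-R,R]$ to the Dirichlet problem for $\widetilde{\mathcal P}$ on $[-R,R]$ (the boundary condition $p_R=0$ is preserved since the weight is bounded away from $0$ and $\infty$ on the compact interval), hence the principal eigenvalues coincide for every $R$; then passing $R\to\infty$ and invoking the monotone convergence $\lambda_R\to\lambda_{\widetilde c,\sigma}$ from \cite{huska08} (stated in the excerpt) gives the equality of the generalized principal eigenvalues on the whole line. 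The identity $\lambda_{\widetilde c,\sigma}=\lambda(\mathcal P_c,\R_+\times\R)$ itself is just the definition of $\lambda_{\widetilde c,\sigma}$ via \eqref{Model_EigenvalueR} together with Proposition \ref{Prop1}.

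Next, for the relation $\lambda_{\widetilde c,\sigma}=\lambda_{0,\sigma}+\frac{\widetilde c^2}{4\sigma}$: the operator $\widetilde{\mathcal P}$ differs from $\mathcal P_0$ only by the constant potential $-\frac{\widetilde c^2}{4\sigma}$, i.e. $\widetilde{\mathcal P}=\mathcal P_0+\frac{\widetilde c^2}{4\sigma}\,\mathrm{Id}$. A constant shift of the zeroth-order coefficient shifts the principal eigenvalue by the same constant and leaves the principal eigenfunction unchanged: if $\mathcal P_0 p_0=\lambda_{0,\sigma}p_0$ with $p_0>0$, $T$-periodic, then $\widetilde{\mathcal P}p_0=(\lambda_{0,\sigma}+\frac{\widetilde c^2}{4\sigma})p_0$, so by uniqueness of the (generalized) principal eigenpair, $\lambda(\widetilde{\mathcal P},\R_+\times\R)=\lambda_{0,\sigma}+\frac{\widetilde c^2}{4\sigma}$. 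Combining with the first part yields $\lambda_{\widetilde c,\sigma}=\lambda_{0,\sigma}+\frac{\widetilde c^2}{4\sigma}$.

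The only genuinely delicate point is the treatment of the \emph{generalized} principal eigenvalue on the unbounded domain $\R$, where one must be careful that the Liouville weight $e^{\frac{\widetilde c}{2\sigma}x}$ is unbounded: it grows at $+\infty$ and decays at $-\infty$, so one cannot transform $L^\infty$ bounds on the eigenfunction directly. I would handle this through the bounded-domain approximation as above, where the weight is harmless, and only at the very end pass to the limit $R\to\infty$ in the eigenvalues (not in the eigenfunctions); the existence and exponential decay of $p_c$ supplied by Proposition \ref{Prop1} ensure the limiting objects are the right ones, and the monotonicity/convergence $\lambda_R\downarrow\lambda_{\widetilde c,\sigma}$ does the rest. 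I expect this limiting argument, together with checking that the Dirichlet boundary conditions and positivity/periodicity are faithfully transported by the transformation, to be the main (though routine) obstacle; everything else is a one-line computation verifying that each substitution sends one eigenvalue equation to the other.
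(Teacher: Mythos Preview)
Your proposal is correct and rests on the same key identity as the paper: the conjugation relation $\widetilde{\mathcal P}\omega=\big(\mathcal P_c(\omega e^{-\frac{\widetilde c}{2\sigma}x})\big)e^{\frac{\widetilde c}{2\sigma}x}$, from which both claims follow immediately. The paper's proof is in fact just this one line; your additional care with the bounded-domain approximation and the limit $R\to\infty$ is more detailed than what the paper provides, but the underlying approach is identical.
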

\begin{proof}
The proof follows from the definition of the eigenfunction and eigenvalue and the fact that 
$$
\mathcal{\widetilde{P}}\omega=\left(\mathcal{P}_c\left(\omega e^{-\frac{\widetilde{c}}{2\sg}x}\right)\right)e^{\frac{\widetilde{c}}{2\sg}x}.
$$
\end{proof}
\subsection{Proof of Proposition \ref{Prop1}.}
\label{Proof_Prop1}
Proposition \ref{Prop1} can be proved following similar arguments as in the proof of Lemma 6 in \cite{Mirrahimi&Figueroa_pprint}. Note that the argument in \cite{Mirrahimi&Figueroa_pprint} is based on an exponential separation result for linear parabolic equations in \cite{huska08} that holds for general linear operators of the form 
$$
\omega_t=L(t,x)\omega,\quad\text{in}\ [0,+\infty)\times\R,
$$
with $L(t,x)$ being any time-dependent second-order elliptic operator in non-divergence form, i.e: 
$$
L(t,x)\omega=a_{ij}(t,x)\p_i\p_j \omega + B_i(t,x)\p_i \omega + A(t,x)\omega,
$$
where the functions $B_i, A\in L^\infty(\R_+\times\R)$ and $a_{ij}$ satisfies
 $$
 a_{ij}(t,x)\xi_i\xi_j\geq \alpha_0|\xi|^2,\quad (t,x)\in \R_+\times\R,
 $$
(see Section  9 in \cite{huska08} for more details).\\
Here, we only provide the proof of the inequality \eqref{bound_p} which is also obtained by an {adaptation} of the proof of Lemma 6 in \cite{Mirrahimi&Figueroa_pprint}.
Let $\widetilde{a}_c(\SM{e(t)},x)=a_c(\SM{e(t)},x)+\lambda_{\widetilde{c},\sigma}$ then $p_c$ is a positive periodic solution of the following equation:
\begin{equation}
\label{a_tilda}
\partial_t p_c-\widetilde{c}\p_x p_c-\sigma\p_{xx} p_c=p_c\widetilde{a}_c(\SM{e(t)},x),\quad\mathrm{in}\;\R\times \R.
\end{equation}
Note that we have defined $p_c$ in $(-\infty, 0]$ by periodic prolongation. We denote $\Vert p_c\Vert_{L^\infty(\R\times\R)}=\Gamma$ and define:
$$
\zeta(t,x)=\Gamma e^{-\delta(t-t_0)}+\Gamma e^{-\nu(|x|-R_0)},
$$
for some $\nu$ to be found later and $\delta$, $R_0$ given in \eqref{a_lambda_neg}. One can verify that 
$$
\Gamma\leq \zeta(t,x)\quad\mathrm{if}\;|x|=R_0\;\mathrm{or}\;t=t_0.
$$
Furthermore if  $|x|>R_0$ or $t>t_0$ evaluating in \eqref{a_tilda} shows:
$$
\begin{array}{rcl}
\partial_t\zeta-\widetilde{c}\p_x\zeta-\sigma\p_{xx}\zeta-\zeta\widetilde{a}_c(\SM{e(t)},x)&=&\Gamma e^{-\delta(t-t_0)}(-\delta-\widetilde{a}_c(\SM{e(t)},x))\\
&& +\Gamma e^{-\nu(|x|-R_0)}\left(\widetilde{c}\nu\frac{x}{|x|}-\sigma\nu^2-\widetilde{a}_c(\SM{e(t)},x)\right)\geq 0,
\end{array}
$$
since $\widetilde{a}_c(\SM{e(t)},x)+\frac{\widetilde{c}^2}{4\sigma}=a(\SM{e(t)},x)+\lambda_{\widetilde{c},\sigma}\leq -\delta$ thanks to assumption \eqref{a_lambda_neg} and choosing $\nu$ conveniently such that the inequality holds. Indeed, since $-1\leq\frac{x}{|x|}\leq 1$, we have:
$$
\widetilde{c}\nu\frac{x}{|x|}-\sigma\nu^2-\widetilde{a}_c(\SM{e(t)},x)\geq -\widetilde{c}\nu-\sg\nu^2+\delta+\frac{\widetilde{c}^2}{4\sg}\geq 0
$$
for
$$
\frac{-\widetilde{c}-\sqrt{4\delta\sg+2\widetilde{c}^2}}{2\sg}\leq\nu\leq \frac{-\widetilde{c}+\sqrt{4\delta\sg+2\widetilde{c}^2}}{2\sg}.
$$
Thus $\zeta$ is a supersolution of \eqref{a_tilda} on:
$$
\Lambda_0=\{(t,x)\in (t_0,\infty)\times \R\;;|x|>R_0\},
$$
which dominates $p_c$ on the parabolic boundary of $\Lambda_0$. Applying the maximum principle to $\zeta-p_c$, we obtain
$$
p_c(t,x)\leq \Gamma e^{-\delta(t-t_0)}+\Gamma e^{-\nu(|x|-R_0)},\qquad |x|\geq R_0,\;t\in(t_0,\infty).
$$
Taking the limit $t_0\rightarrow-\infty$ yields
$$
p_c(t,x)\leq \Gamma e^{-\nu(|x|-R_0)},\qquad |x|\geq R_0,\;t< +\infty,
$$
in particular, for $\nu=\dfrac{-\widetilde{c}+\sqrt{4\delta\sg+2\widetilde{c}^2}}{2\sg}$. We conclude that $p_c$ satisfies \eqref{bound_p}.
\vbox{\hrule height0.6pt\hbox{%
   \vrule height1.3ex width0.6pt\hskip0.8ex
   \vrule width0.6pt}\hrule height0.6pt
}

\subsection{Proof of Proposition \ref{ConvLambda}.}
\label{Proof_Prop4}
The proof of Proposition \ref{ConvLambda}, is closely related to the proof of Proposition 2 in \cite{Mirrahimi&Figueroa_pprint} but we need to verify two properties before applying the arguments in \cite{Mirrahimi&Figueroa_pprint}.  To this end we prove the following lemmas. The rest of the proof follows from the arguments in \cite{Mirrahimi&Figueroa_pprint}. 
\begin{lemma}
\label{Relation_lb_c}
Let $\lambda_{\widetilde{c},\sigma}$ be the principal eigenvalue of problem \eqref{Model_EigenvalueR}. Then, $
\lambda_{\widetilde{c},\sigma}<0$ if and only if $\widetilde{c}<\widetilde{c}^*_\sigma.$
\end{lemma}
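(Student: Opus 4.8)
The plan is to reduce everything to the identity $\lambda_{\widetilde{c},\sigma}=\lambda_{0,\sigma}+\frac{\widetilde{c}^2}{4\sigma}$ established in Lemma \ref{lambda_c}, and then to split into cases according to the sign of $\lambda_{0,\sigma}$, which is precisely the dichotomy appearing in the definition \eqref{Critical_c} of $\widetilde{c}^*_\sigma$. Throughout one uses that $\widetilde{c}\geq 0$ by convention.

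First I would treat the case $\lambda_{0,\sigma}<0$, so that $\widetilde{c}^*_\sigma=2\sqrt{-\sigma\lambda_{0,\sigma}}$. Using the identity from Lemma \ref{lambda_c}, the inequality $\lambda_{\widetilde{c},\sigma}<0$ is equivalent to $\widetilde{c}^2<-4\sigma\lambda_{0,\sigma}$, and since both $\widetilde{c}$ and $2\sqrt{-\sigma\lambda_{0,\sigma}}$ are nonnegative this is in turn equivalent to $\widetilde{c}<2\sqrt{-\sigma\lambda_{0,\sigma}}=\widetilde{c}^*_\sigma$. This gives the equivalence in this case.

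Next I would treat the case $\lambda_{0,\sigma}\geq 0$, so that $\widetilde{c}^*_\sigma=0$. Here the right-hand statement $\widetilde{c}<\widetilde{c}^*_\sigma=0$ is never satisfied, since $\widetilde{c}\geq 0$. On the other hand, the identity from Lemma \ref{lambda_c} gives $\lambda_{\widetilde{c},\sigma}=\lambda_{0,\sigma}+\frac{\widetilde{c}^2}{4\sigma}\geq 0$, so the left-hand statement $\lambda_{\widetilde{c},\sigma}<0$ is also never satisfied. Both statements being false, they are (vacuously) equivalent, which completes the proof.

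There is essentially no obstacle here: the lemma is a one-line consequence of Lemma \ref{lambda_c} once the case distinction matching \eqref{Critical_c} is made; the only point requiring a word of care is the use of $\widetilde{c}\geq 0$ when taking square roots and when discarding the case $\lambda_{0,\sigma}\geq0$.
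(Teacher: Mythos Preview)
Your proof is correct and follows exactly the route the paper intends: the paper's own proof is the single sentence ``Follows directly from the definition of $\widetilde{c}^*_\sigma$,'' which implicitly uses Lemma~\ref{lambda_c} and the case split in \eqref{Critical_c} that you have spelled out in full. Your version is simply a more explicit rendering of that one-liner.
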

\begin{proof}
Follows directly from the definition of $\widetilde{c}^*_\sigma$.
\end{proof}\\

\begin{lemma}
\label{bound_n}
Assume \eqref{a_W3inf} and \eqref{n0_exp} and let $C_3=C_2(\sigma C_2+\widetilde{c})+d_0$ then the solution $n(t,x)$ to equation \eqref{Case_n} satisfies:
$$
n(t,x)\leq \exp\left(C_1-C_2|x|+C_3t\right),\quad\forall(t,x)\in(0,+\infty)\times\R.
$$	 
\end{lemma}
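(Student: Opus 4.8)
The plan is to use a direct comparison-principle argument. I would compare $n$ to the explicit supersolution $\overline{n}(t,x)=\exp(C_1-C_2|x|+C_3t)$, but since $|x|$ is not smooth at the origin this requires a little care — either one smooths $|x|$, or one uses the barrier away from $x=0$ together with the bound $n(t,0)\le \exp(C_1+C_3 t)$ coming from the control on the region $\{|x|\le 1\}$, or (cleanest) one checks that $\overline n$ is a viscosity/distributional supersolution globally because the ``corner'' at $x=0$ only helps (the second derivative of $|x|$ picks up a nonpositive Dirac mass, so $-\sigma\partial_{xx}\overline n$ gains a nonnegative contribution). I will spell it out in the smooth regions and note that the corner is favorable.

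The main computation: away from $x=0$, $\partial_x \overline n = -C_2\,\mathrm{sgn}(x)\,\overline n$ and $\partial_{xx}\overline n = C_2^2\,\overline n$, while $\partial_t \overline n = C_3\,\overline n$. Hence
\begin{equation*}
\partial_t\overline n-\widetilde c\,\partial_x\overline n-\sigma\partial_{xx}\overline n-\overline n[a(e(t),x)-\rho(t)]
=\overline n\Big(C_3+\widetilde c\,C_2\,\mathrm{sgn}(x)-\sigma C_2^2-a(e(t),x)+\rho(t)\Big).
\end{equation*}
Using $\rho(t)\ge 0$, $a(e(t),x)\le d_0$ by \eqref{a_W3inf}, and $\widetilde c\,C_2\,\mathrm{sgn}(x)\ge -\widetilde c\,C_2$ (recall $\widetilde c\ge 0$, $C_2>0$), the right-hand side is bounded below by $\overline n\big(C_3-\sigma C_2^2-\widetilde c C_2-d_0\big)=0$ precisely by the choice $C_3=C_2(\sigma C_2+\widetilde c)+d_0$. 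So $\overline n$ is a supersolution of the \emph{linear} equation obtained by freezing the coefficient $\rho(t)$; note that $n$ itself solves this linear equation with the (sign-definite) coefficient $a(e(t),x)-\rho(t)\in L^\infty$ once we know $\rho$ is finite, which follows since $n_0\in L^1$ and a standard a priori estimate keeps $\rho(t)$ finite on finite time intervals.

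Then I would invoke the parabolic maximum principle / comparison principle for the operator $\partial_t-\widetilde c\,\partial_x-\sigma\partial_{xx}-(a(e(t),x)-\rho(t))$ on $[0,\infty)\times\R$: at $t=0$ we have $n_0(x)\le e^{C_1-C_2|x|}=\overline n(0,x)$ by assumption \eqref{n0_exp}, both functions decay (so the comparison principle applies on the whole line, e.g. via the exponential-weight Phragmén–Lindelöf version, or by comparing on $[-R,R]$ and letting $R\to\infty$ using the decay of $\overline n$ and the a priori exponential-in-$x$ control on $n$), and therefore $n(t,x)\le\overline n(t,x)$ for all $t>0$, which is the claim. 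The only mildly delicate point — and the one I would flag as the main obstacle — is justifying the comparison on the unbounded domain together with handling the non-smoothness of $|x|$ at the origin; both are routine but must be stated, and the cleanest route is the localized comparison on $[-R,R]$ (where one also uses that $\overline n$ dominates $n$ on the lateral boundary $x=\pm R$ for large $R$, again by the already-available growth bound on $n$) followed by $R\to\infty$.
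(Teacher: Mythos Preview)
Your proof is correct and follows essentially the same approach as the paper: define the barrier $\bar n(t,x)=\exp(C_1-C_2|x|+C_3t)$, check via the computation you wrote that it is a supersolution (the paper simply says the inequality holds a.e.), and apply the comparison principle using \eqref{n0_exp} at $t=0$. The paper handles the unbounded-domain issue by invoking the maximum principle ``in the class of $L^2$ functions'' rather than your localization-on-$[-R,R]$ argument, but the substance is identical.
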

\begin{proof}
We argue by a comparison principle argument. Define the function 
$$\bar n(t,x)=\exp\left(C_1-C_2|x|+C_3t\right).$$
We prove that $n\leq \bar n$. One can verify that for $C_3$ defined as in the formulation of the Lemma, we have the following inequality a.e:
$$
\begin{array}{c}
 \partial_t \bar n -\widetilde{c}\p_x \bar n-\sigma\p_{xx} \bar n-\left[a(\SM{e(t)},x)-\rho(t)\right] \bar n  =    \\
  e^{\left(C_1-C_2|x|+C_3t\right)}\left[C_3-\sigma
C_2^2+C_2\frac{cx}{|x|}-a(\SM{e(t)},x)+\rho(t)\right]
\geq  0.
\end{array}
$$
Moreover, we have for $t=0$, $n(0,x)\leq \bar{n}(0,x)$ thanks to assumption \eqref{n0_exp}. We can then apply a maximum principle to $d(t,x)=\bar n(t,x)-n(t,x)$, in the class of $L^2$ functions, and we conclude that:
$$
0 \leq d(t,x)\Rightarrow n(t,x)\leq \bar n(t,x),\quad\forall(t,x)\in(0,+\infty)\times\R.
$$
\end{proof}

\section{Regularity estimates}
\label{Asymp_Analysis} 

In this section, we prove Theorem \ref{Conv_HJ}. To this end we first provide some uniform bounds for $\rho_\e(t)$. Then, in Subsection \ref{Regularity}, we prove that $\psi$ is locally uniformly bounded, Lipschitz continuous with respect to $x$
and locally equicontinuous in time. Finally in the last subsection we conclude the proof of Theorem \ref{Conv_HJ} by letting $\e$ go to zero and describing the limits of $\psi_\e$ and $\rho_\e$. 

\subsection{Uniform bounds for $\rho_\e$.}\label{subs:unif_bounds_rhoe} We have the following result on $\rho_\e$.
\begin{proposition}
\label{bounds_rhoe}
Assume \eqref{a_W3inf}, \eqref{a_lambda_neg} and let $\widetilde{c}=\e c$ with {$c<\dis\liminf_{\e\to0} c^*_{\e}$}. Then for all $0<\e\leq \e_0$, there exist positive constants $\rho_m$ and $\rho_M$ such that:
\begin{equation}
\label{Rho_bouded}
0 < \rho_m \leq \rho_\e(t) \leq \rho_M,\quad  \forall t\geq 0.
\end{equation}
\end{proposition}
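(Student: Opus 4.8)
The plan is to establish the two bounds separately, both via comparison-principle arguments applied to the periodic problem \eqref{n_eps}, exploiting the uniform control on the eigenvalue $\lambda_{c,\e}$ coming from the standing hypothesis $c<\liminf_{\e\to 0}c^*_\e$. First I would recall that, by Lemma \ref{lambda_c}, $\lambda_{c,\e}=\lambda_{0,\e^2}+\frac{c^2}{4}$, and that $c<\liminf_{\e\to0}c^*_\e$ together with the definition \eqref{Critical_c} of the critical speed (rescaled) forces $\lambda_{c,\e}<-\kappa$ for some $\kappa>0$ and all $\e\le\e_0$; this is the quantitative input that prevents the population from collapsing. Since $n_\e$ is the unique periodic solution of \eqref{n_eps} and $\rho_\e(t)=\int_\R n_\e(t,x)\,dx$, integrating the equation in $x$ (the drift and diffusion terms drop out under \eqref{a_W3inf} and the exponential decay of $n_\e$) gives $\rho_\e'(t)=\rho_\e(t)\big[Q_{c\e}(t)-\rho_\e(t)\big]$ where $Q_{c\e}$ is the weighted average of $a(e(t),\cdot)$ against $n_\e(t,\cdot)/\rho_\e(t)$, exactly as in \eqref{Rho_equation}. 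By periodicity, $\frac1T\int_0^T Q_{c\e}(t)\,dt$ is, up to the lag correction, tied to $-\lambda_{c\e,\e^2}$ — more precisely one uses Theorem \ref{Unique_Identif}(i) (or rather its underlying identity, valid here since \eqref{a_W3inf}, \eqref{a_lambda_neg} are assumed) to get $\overline\rho_\e=-\lambda_{c,\e}$, hence $\frac1T\int_0^T Q_{c\e}=\overline\rho_\e\ge\kappa>0$. Then the explicit formula \eqref{Exp_rho_var} (or a direct ODE comparison) yields both a lower bound $\rho_\e(t)\ge\rho_m>0$ and an upper bound, using that $\|Q_{c\e}\|_\infty\le d_0$ uniformly by \eqref{a_W3inf}.

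For the upper bound, the cleanest route avoids invoking $Q_{c\e}$ pointwise: since $Q_{c\e}(t)\le d_0$, the ODE for $\rho_\e$ gives $\rho_\e'\le\rho_\e(d_0-\rho_\e)$, so any periodic solution must satisfy $\rho_\e(t)\le d_0=:\rho_M$ (a periodic solution of a logistic-type ODE with growth rate bounded by $d_0$ cannot exceed $d_0$, since wherever $\rho_\e>d_0$ we would have $\rho_\e'<0$, and periodicity then forces $\rho_\e\le d_0$ everywhere). For the lower bound, one uses $Q_{c\e}(t)\ge -d_0$ pointwise together with the positive average $\frac1T\int_0^T Q_{c\e}\ge\kappa$: from $\rho_\e'=\rho_\e(Q_{c\e}-\rho_\e)$ and $\rho_\e\le\rho_M$ we get $\rho_\e'\ge\rho_\e(Q_{c\e}-\rho_M)$, and integrating over a period shows $\rho_\e(t)$ stays bounded below by a constant depending only on $\kappa$, $d_0$, $\rho_M$ and $T$ — equivalently, one reads this off directly from \eqref{Exp_rho_var} by bounding the inner integrals using $-d_0\le Q_{c\e}\le d_0$ and $\int_0^T Q_{c\e}\ge\kappa$. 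All constants produced are independent of $\e\in(0,\e_0]$.

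The main obstacle is the uniformity in $\e$ of the lower bound: one must be sure that the positive-average property $\frac1T\int_0^T Q_{c\e}\ge\kappa>0$ holds with $\kappa$ not degenerating as $\e\to0$. This is where the hypothesis $c<\liminf_{\e\to0}c^*_\e$ is essential and must be used carefully — it guarantees exactly that $-\lambda_{c,\e}=\overline a(x_m)-\frac{c^2}{4}+o(1)$ stays bounded away from zero (this being a weak consequence of, or a prerequisite for, Theorem \ref{Asymp_Expansion}, though here we only need the crude positivity, obtainable from $\lambda_{c,\e}=\lambda_{0,\e^2}+c^2/4$ and the monotone convergence $\lambda_R\to\lambda_{0,\e^2}$ plus a test-function argument with \eqref{max_a}). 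A secondary technical point is justifying the integration by parts that eliminates the $\partial_x$ and $\partial_{xx}$ terms: this requires the exponential tail bound on $n_\e$, which one obtains from Lemma \ref{bound_n} (applied to the non-rescaled problem, then transferred) or directly from \eqref{bound_p} via $n_\e=\rho_\e P_{c\e}$ and Proposition \ref{Prop1}. I would organize the proof as: (1) derive the ODE for $\rho_\e$ with the integration-by-parts justification; (2) record $\|Q_{c\e}\|_\infty\le d_0$ and $\frac1T\int_0^T Q_{c\e}\ge\kappa$ uniformly, citing the eigenvalue estimate; (3) conclude $\rho_M=d_0$ and extract $\rho_m$ from \eqref{Exp_rho_var}.
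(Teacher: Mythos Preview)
Your proposal is correct and follows essentially the same approach as the paper: derive the logistic-type ODE $\rho_\e'=\rho_\e(Q_{c\e}-\rho_\e)$ by integrating \eqref{n_eps} in $x$, get the upper bound $\rho_M=d_0$ from $Q_{c\e}\le d_0$, and extract the uniform lower bound from the explicit formula \eqref{Exp_rho_var} using $\frac1T\int_0^T Q_{c\e}=-\lambda_{c,\e}\ge\lambda_m>0$, where $\lambda_m$ comes from $c<\liminf_{\e\to0}c^*_\e$ via $\lambda_{c,\e}=\lambda_{0,\e^2}+c^2/4$ (this is exactly the paper's \eqref{lambda_m}). The paper's proof in Appendix A is more terse but structurally identical; your discussion of the integration-by-parts justification and the uniformity of $\kappa$ fills in details the paper leaves implicit.
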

The proof of this result follows similar arguments as in \cite{Mirrahimi&Figueroa_pprint}. For the convenience of the reader, we provide this proof in Appendix A. 

\subsection{Regularity results for $\psi_\e$.}
\label{Regularity}
In this subsection we prove some regularity estimates on $\psi_\e$ which give the basis to prove the convergence of $\psi_\varepsilon$ and $\rho_\varepsilon$ as $\varepsilon\rightarrow 0$ in Subsection \ref{limit_equation}. 
We claim the following Theorem.\\

\begin{theorem}
\label{regularity_theo}
Assume \eqref{a_W3inf}, \eqref{max_a} and \eqref{a_lambda_neg}. Let $\psi_\e$ be a $T-$periodic solution to \eqref{Eq_psi_eps}. Then the following items hold:
\begin{itemize}
\item[(i)] The sequence $(\psi_\e)_\e$ is locally uniformly bounded; i.e. 
\begin{equation}
\label{Bounds_psi_eps}
-A_1\vert x\vert^2-\frac{c}{2}x-A_2\leq \psi_\e\leq c_1-c_2\vert x\vert, \quad \forall (t,x)\in\R_+\times\R,
\end{equation}
for some positive constants $A_1,A_2, c_1$ and $c_2=-\frac{c}{2}+\sqrt{\delta+\frac{c^2}{2}}$.
\item[(ii)] Moreover, the sequence $(\phi_\e=\sqrt{2c_1-\psi_\e})_\e$, is uniformly Lipschitz continuous with respect to $x$ in $(0,+\infty)\times\R$.

\item[(iii)] Also, $(\psi_\e)_\e$ is locally equicontinuous in time in $[0,T]\times \R$ and satisfies
\begin{equation}
\label{Equi_u}
|\psi_\e(t,x)-\psi_\e(s,x)|\to0\quad\mathrm{as}\:\e\to 0, \quad \forall\ 0\leq s\leq t\leq T.
\end{equation}
\end{itemize}
\end{theorem}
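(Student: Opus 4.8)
\emph{Plan of proof.} The three items are proved in the order (i), (ii), (iii), each relying on the previous one, and all three are maximum-principle and comparison arguments for \eqref{Eq_psi_eps}; throughout one uses the uniform bounds $0<\rho_m\le\rho_\e(t)\le\rho_M$ of Proposition~\ref{bounds_rhoe}. The only structural novelty with respect to the companion setting of \cite{Mirrahimi&Figueroa_pprint} is the drift: setting $u_\e:=\psi_\e+\frac c2 x$ (the Liouville change of variable of Subsection~\ref{Liouville}, read in the WKB variable), equation \eqref{Eq_psi_eps} becomes the drift-free viscous Hamilton--Jacobi equation $\frac1\e\p_t u_\e-\e\p_{xx}u_\e=|\p_x u_\e|^2+a(e(t),x)-\frac{c^2}{4}-\rho_\e(t)$, which accounts for the shifted $\pm\frac c2 x$ terms in \eqref{Bounds_psi_eps}.

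\emph{(i) Two-sided bounds.} For the upper bound I would compare, on $\{|x|\ge R_0\}$, with the supersolution $\theta_\e(t)+c_1-c_2|x|$ of \eqref{Eq_psi_eps}, where $\theta_\e$ is an explicit $T$-periodic function of amplitude $O(\e)$ absorbing the oscillation of $\rho_\e(t)$ about its mean; the supersolution inequality holds on $\{|x|\ge R_0\}$ thanks to \eqref{a_lambda_neg}, precisely because the eigenfunction decay rate $\nu$ of \eqref{bound_p} equals $c_2/\e$ when $\sigma=\e^2$ and $\widetilde c=\e c$ --- so that, alternatively, the same bound follows from \eqref{bound_p} together with the periodic representation $n_\e=\widehat{\rho}_\e(t)P_{c\e}(t,x)$ of Proposition~\ref{ConvLambda}. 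On $\{|x|\le R_0\}$, $\psi_\e$ is bounded above (in fact by $o_\e(1)$) using $\rho_\e\le\rho_M$ and a local boundedness estimate for the non-negative subsolution $n_\e$, which fixes an admissible $c_1$. The lower bound is obtained analogously, comparing with the quadratic subsolution $-A_1|x|^2-\frac c2 x-A_2$: its shifted gradient grows, so the term $4A_1^2x^2$ dominates the bounded $a$ and the subsolution inequality holds for $|x|$ large, while on a compact set one uses $\rho_\e\ge\rho_m>0$ and a Harnack-type lower bound for $n_\e$.

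\emph{(ii) Lipschitz bound via the substitution.} From (i), $\psi_\e\le c_1$, so $\phi_\e=\sqrt{2c_1-\psi_\e}$ is well defined and, crucially, $\phi_\e\ge\sqrt{c_1}>0$. Substituting $\psi_\e=2c_1-\phi_\e^2$ in \eqref{Eq_psi_eps} gives
\begin{equation*}
\e\,\p_{xx}\phi_\e-\tfrac1\e\p_t\phi_\e=\Bigl(2\phi_\e-\tfrac{\e}{\phi_\e}\Bigr)|\p_x\phi_\e|^2-c\,\p_x\phi_\e+\frac{a(e(t),x)-\rho_\e(t)}{2\phi_\e},
\end{equation*}
and the point of the substitution is that the zeroth-order term on the right is now \emph{bounded}, uniformly in $\e$, by $(d_0+\rho_M)/(2\sqrt{c_1})$ --- something unavailable working with $\psi_\e$ directly, since a naive differentiation of \eqref{Eq_psi_eps} in $x$ produces a term in $\p_x a$. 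I would then run a Bernstein-type argument on $r_\e=\p_x\phi_\e$: differentiating once more in $x$ and examining an extremum of $r_\e$ (localized by a spatial cut-off) over $[0,T]\times[-R,R]$, which is interior in time by $T$-periodicity, one finds that the cubic term produced by $\p_x\!\bigl((2\phi_\e-\e/\phi_\e)r_\e^2\bigr)$ --- whose coefficient $2\phi_\e-\e/\phi_\e$ is positive for $\e$ small --- dominates all lower-order contributions, forcing $|r_\e|$ to be bounded by a constant depending only on $R$, $c$, $d_0$, $\rho_M$, $c_1$ and the local $C^1$ norm of $a$. This is the step I expect to be the main obstacle, because of the genuinely nonlinear Hamiltonian and the requirement that every constant be independent of $\e$.

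\emph{(iii) Equicontinuity in time.} Given (i)--(ii), on each compact $\{|x|\le R\}$ the quantity $|\p_x\psi_\e+\frac c2|^2+a-\frac{c^2}{4}-\rho_\e$ is bounded uniformly in $\e$, so there $\psi_\e$ is a subsolution of $\p_t v\le\e^2\p_{xx}v+\e C_R$; moreover, discarding the non-negative Hamiltonian, $c_1-\psi_\e$ is globally a subsolution of $\p_t w\le\e^2\p_{xx}w+\e C$. Comparing $\psi_\e(t,\cdot)$ with $\psi_\e(s,\cdot)$ through the associated heat evolution --- the free heat kernel globally (the at-most-quadratic growth of $\psi_\e$ from (i) makes the Phragm\'en--Lindel\"of comparison legitimate), and the Dirichlet kernel on $(-R,R)$ in the localized estimate, whose difference from the free kernel at an interior point is super-exponentially small in $\e$ --- reduces the difference to a Gaussian average, at scale $\e\sqrt{t-s}$, of the locally Lipschitz function $\psi_\e(s,\cdot)$; this average is $O(\e)$, and adding the $\e C_R(t-s)$ contribution yields \eqref{Equi_u}, uniformly for $0\le s\le t\le T$ and consistently with $T$-periodicity.
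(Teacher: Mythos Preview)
Your overall architecture is right, and the Liouville substitution $u_\e=\psi_\e+\tfrac c2 x$ is indeed how the drift is absorbed. For the upper bound in (i), your ``alternatively'' clause (eigenfunction decay \eqref{bound_p} plus the periodic representation) is exactly what the paper does. But the lower bound is where your sketch has a genuine gap. A direct comparison with the quadratic $-A_1|x|^2-\tfrac c2 x-A_2$ as a subsolution cannot be closed: on the exterior $\{|x|>R_1\}$ you would have the right ordering at $|x|=R_1$ from Harnack, but no control at infinity, and in the periodic setting there is no initial time at which the ordering $\psi_\e\ge\underline\psi$ can be verified to launch the comparison --- the argument is circular. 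The paper breaks this circularity by working at the linear level: it bounds $n_\e$ below by the solution $n_\e^*$ of $\partial_t n_\e^*-\e c\,\partial_x n_\e^*-\e^2\partial_{xx}n_\e^*+D_0 n_\e^*=0$ with the same (periodic) data at $t=0$, passes to the Liouville variable $N_\e^*=n_\e^*e^{cx/(2\e)}$, which solves a pure heat equation, and writes the explicit heat-kernel representation. Harnack gives $N_\e^*(0,\cdot)\ge \e^{-1}D_2 e^{(-D_3+cx/2)/\e}$ only on a small ball $B(x_\e,\e)$; the heat kernel then \emph{spreads} this localized lower bound into a global Gaussian one for times $t/\e\in[1,1+\e T]$, and periodicity transfers it to all $t$. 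This propagation-by-heat-kernel step is the missing idea in your outline.

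For (ii) and (iii) your proposals are valid but differ from the paper's. In (ii) the paper runs Bernstein as you say, but instead of a cut-off it compares $|\Phi_\e|=|\partial_x\phi_\e|$ directly with the explicit barrier $\Theta_\e(t,x)=\tfrac{1}{2\sqrt{\e t}}+\tfrac{A_6R^2}{R^2-|x|^2}+\vartheta$, which is $+\infty$ on the parabolic boundary; the contact point is then interior and the cubic term $2(|\Phi_\e|-\vartheta)^3$ forces a contradiction. Sending $R\to\infty$ and evaluating on $t\in[T/\e,T/\e+T]$ (where $1/(2\sqrt{\e t})$ is harmless), then invoking periodicity, yields a \emph{global} Lipschitz constant rather than the $R$-dependent one your cut-off would give. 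In (iii) your heat-kernel route works but the paper's argument, following \cite{BarlesEqui}, is much lighter: one checks that $\widehat\xi(t,y)=\psi_\e(s,x)+\eta+\Lambda_1|x-y|^2+\e\Lambda_2(t-s)$ is a supersolution of \eqref{Eq_psi_eps} on $[s,T]\times B_R(0)$ dominating $\psi_\e$ on the parabolic boundary (this uses only (i) and the spatial equicontinuity from (ii)), and a single comparison gives $\psi_\e(t,x)-\psi_\e(s,x)\le\eta+\e\Lambda_2(t-s)$ without any kernel estimates.
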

In the next subsections we provide the proof of the lower bound in \eqref{Bounds_psi_eps} and the uniform Lipschitz continuity of $\phi_\e$. The proof of the other properties can be obtained by an adaptation of the arguments in \cite{Mirrahimi&Figueroa_pprint}. For the convenience of the reader we provide them in Appendix A.

\subsubsection{Lower bound for $\psi_\e$.}\label{subs:lower_bound_psie}
To obtain the lower bound for $\psi_\e$ we use the bounds for $a$ in \eqref{a_W3inf} and for $\rho_\varepsilon$ in \eqref{Rho_bouded} and we obtain for $D_0=d_0+\rho_M$
$$
\partial_t n_\e-c\e\partial_x n_\e-\e^2\partial_{xx} n_\e\geq -D_0n_\e.
$$
Let $n_\e^*$ be the solution of the following Cauchy problem
$$
\left\{
\begin{array}{l}
\partial_t n_\e^*-c\e\partial_x n^*_\e-\e^2\partial_{xx} n_\e^*+D_0 n_\e^*=0,\\
n_\e^*(0,x)=n_\e^0,
\end{array}
\right.
$$
we define $N_\e^*$ analogously to \eqref{Liouville_transf} by the Liouville transformation of $n_\e^*$ as follows
$$
N_\e^*(t,x):=n_\e^*(t,x)e^{\frac{c}{2\e}x}.
$$
Then,  $N_\e^*$ solves the heat equation
$$
\left\{
\begin{array}{l}
\partial_t N_\e^*-\e^2\partial_{xx} N_\e^*+D_1N_\e^*=0,\\
N_\e^*(0,x)=n_\varepsilon^0(x)e^{\frac{c}{2\e}x},
\end{array}
\right.
$$
for $D_1=D_0+\frac{c^2}{4}$. The solution to the latter equation is given explicitly by the Heat Kernel $K$, 
$$
N_\e^*(t,x)=e^{-D_1t}\left(N_\e^*(0,y)\ast K\right)=\frac{e^{-D_1t}}{\e\sqrt{4\pi t}}\int_{\R}N_\e^*(0,y)e^{-\frac{|x-y|^2}{4t \e^2}}dy,\quad t>0.
$$
Note that $N_\e^*(0,x)$ from its definition can be written as follows
\begin{equation}
\label{N_e*_pce}
N_\e^*(0,x):=\frac{p_{c\e}(0,x)}{\int_\R p_{c\e}(0,x)dx}\rho_\e(0)e^{\frac{c}{2\e}x}.
\end{equation}
We recall that $p_{c\e}$ is uniquely determined once $\Vert p_{c\e}(0,x)\Vert_{L^\infty(\R)}=1$ is fixed. Then, one can choose $x_\e$ such that $ p_{c\e}(0,x_\e)=1$. From an elliptic-type Harnack inequality in a bounded domain we can obtain 
\begin{equation}
\label{BoundInf_p}
p_{c\e}(t_0,x_\e)\leq \sup_{y\in B(x_\e,\e)}p_{c\e}(t_0,y)\leq C p_{c\e}(t_0,x),\quad \forall(t_0,x)\in[\delta_0,2T]\times B(x_\e,\e),
\end{equation}
where $\delta_0$ is such that $0<\delta_0<T$ and $C$ is a positive constant depending on $\delta_0$ and $d_0$ (we refer to Appendix A-Proof of  upper bound, for more details on this inequality). We then use the $T-$periodicity of $p_{c\e}$ to conclude that the last inequality is satisfied {for all} $t\in[0,T]$.\\
From \eqref{bound_p}, \eqref{N_e*_pce} and \eqref{BoundInf_p} we deduce that
$$ 
\e^{-1}D_2e^{-\frac{D_3}{\e}+\frac{c}{2\e}x}\leq \rho_m \dfrac{p_\e(0,x)e^{\frac{c}{2\e}x}}{\int_{\R}p_\e(0,x)dx}\leq N^*_\e(0,x),\quad\forall x\in B(x_\e,\e),
$$
for some positive constants $D_2$ and $D_3$ depending on $\Vert p_\e\Vert_{L^\infty}$, $\rho_m$, $\delta$, and the constants of hypothesis \eqref{a_lambda_neg}. Then, for all $(t,x)\in(0,+\infty)\times\R$
$$
\begin{array}{rcl}
N_\e^*(t,x)
&\geq &\dis\frac{D_2}{\e^{2}\sqrt{4\pi t}}e^{-\frac{D_3+\e D_1 t}{\e}}\int_{B(x_\e,\e)}e^{\frac{c}{2\e}y}e^{-\frac{|x-y|^2}{4t \e^2}}dy\\
&\geq &
\dis\frac{D_2|B(x_\e,\e)|}{\e^2\sqrt{4\pi t}}\exp\left\{-\frac{|x|^2+(|x_\e|+\e)^2}{2 t\e^2}+\frac{c}{2}\left(\frac{x_\e}{\e}-1\right)-\frac{D_3+D_1t\e}{\e}\right\}.
\end{array}
$$
By the definition of $n_\e^*$ and the comparison principle we obtain that $n_\e^*(t,x)\leq n_\e(t,x)$ and hence
$$
n_\e(t,x)\geq 
\dis\frac{D_2|B(x_\e,\e)|}{\e^2\sqrt{4\pi t}}\exp\left\{-\frac{|x|^2+(|x_\e|+\e)^2}{2 t\e^2}+\frac{c}{2}\left(\frac{x_\e-x}{\e}-1\right)-\frac{D_3+D_1t\e}{\e}\right\}.
$$
This, together with the definition of $\psi_\varepsilon$, implies that
$$
\e\log\left(\frac{D_2|B(x_\e,\e)|}{\e^{2}\sqrt{4\pi t}}\right)-\frac{|x|^2+(|x_\e|+\e)^2}{2t\e}+\frac{c}{2}(x_\e-x-\e)-(D_3+D_1t\e)\leq \psi_\e(t,x),\quad\forall t\geq 0.
$$
In particular, we obtain that $\forall t\in\left[1,1+\e T\right]$.
$$
\e\log\left(\frac{D_2|B(x_\e,\e)|}{\e^{3/2}\sqrt{4\pi t}}\right)-\frac{|x|^2+(|x_\e|+\e)^2}{2t}+\frac{c}{2}(x_\e-x-\e)-(D_3+D_1t)\leq \psi_\e\left(\frac{t}{\e},x\right)$$
Note that $x_\e$ is uniformly bounded in $\e$ thanks to \eqref{bound_p}. Then we can conclude by using the periodicity of $\psi_\e$.  We obtain a quadratic lower bound for $\psi_\varepsilon$ for all $t\geq0$; that is, there exist $A_1$, $A_2\geq 0$ and $\e_0$ small enough such that for all $\e\leq\e_0$,

\begin{equation}
\label{Low_Bound_psi_e}
-A_1|x|^2-\frac{c}{2}x-A_2\leq \psi_\e(t,x),\quad\forall t\geq 0.
\end{equation}
\subsubsection{Lipschitz bounds.}\label{subsubs:lipschitz}
In this section we prove the Lipschitz bounds for $\phi_\e$. To this end we use a Bernstein type method closely related to the one used in \cite{MAA,Mirrahimi&Figueroa_pprint}. Let $\phi_\e = \sqrt{2c_1-\psi_\e}$,  for $c_1$ given by \eqref{Bounds_psi_eps}, then $\phi_\e$ satisfies
$$
\frac{1}{\e}\partial_t\phi_\e -c\partial_x\phi_\e -\e\partial_{xx}\phi_\e -\left(\frac{\e}{\phi_\e }-2\phi_\e \right)|\partial_x\phi_\e |^2=\frac{a(\SM{e(t)},x)-\rho_\e(t)}{-2\phi_\e }.
$$
Define $\Phi_\e=\p_x \phi_\e$, which is also $T-$periodic. We differentiate the above equation with respect to $x$ and multiply by $\frac{\Phi_\e }{|\Phi_\e |}$, i.e.,
$$
\begin{array}{rcl}
\frac{1}{\e}\partial_t|\Phi_\e |-c\partial_x|\Phi_\e |-\e\partial_{xx}|\Phi_\e |-2\left(\frac{\e}{\phi_\e }-2\phi_\e \right)\Phi_\e \cdot\partial_x|\Phi_\e |+\left(\frac{\e}{\phi_\e ^2}+2\right)|\Phi_\e |^3 &&\\\\
\leq 
\dfrac{\left(a(\SM{e(t)},x)-\rho_\e(t)\right)|\Phi_\e |}{2\phi_\e ^2}-\dfrac{\partial_x a\cdot \Phi_\e }{2\phi_\e |\Phi_\e |}. &&
\end{array}
$$
From \eqref{Bounds_psi_eps} we deduce that
$$
\sqrt{c_1}\leq \phi_\e \leq \sqrt{A_1|x|^2+\frac{c}{2}x+A_3},\qquad\forall\ t\geq 0,\ x\in\R,
$$
for $A_3=A_2+2c_1$. It follows that
$$
\left|2\left(\frac{\e}{\phi_\e }-2\phi_\e \right)\right|\leq A_4|x|+A_5,
$$
for some positive constants $A_4$ and $A_5$. From here, we deduce for $\vartheta$ large enough
\begin{equation}
\label{wp_eps_theta}
\frac{1}{\e}\partial_t|\Phi_\e |-c\partial_x|\Phi_\e |-\e\partial_{xx}|\Phi_\e |-\big(A_4|x|+A_5\big)\big\vert \Phi_\e \cdot\partial_x|\Phi_\e |\big\vert+2\left(|\Phi_\e |-\vartheta\right)^3\leq0.
\end{equation}
Let $T_M>2T$ and $A_6$ to be chosen later, define now, for $(t,x)\in\Big(0,\frac{T_M}{\varepsilon}\Big]\times [-R,R]$
$$
\Theta_\e(t,x)=\frac{1}{2\sqrt{t\varepsilon}}+\frac{A_6R^2}{R^2-|x|^2}+\vartheta.
$$
We next verify that $\Theta_\e$ is a strict supersolution of \eqref{wp_eps_theta} in $\Big(0,\frac{T_M}{\varepsilon}\Big]\times [-R,R]$. To this end we compute
$$
\partial_t \Theta_\e=-\frac{1}{4t\sqrt{t\e}},\quad \partial_x \Theta_\e=\frac{2A_6R^2x}{(R^2-|x|^2)^2},\quad \partial_{xx} \Theta_\e=\frac{2A_6R^2}{(R^2-|x|^2)^2}+\frac{8A_6R^2|x|^2}{(R^2-|x|^2)^3},
$$
and then replace in \eqref{wp_eps_theta} to obtain
$$
\begin{array}{l}
\frac{1}{\e}\partial_t\Theta_\e-c\partial_x\Theta_\e-\e\partial_{xx} \Theta_\e-\big(A_4|x|+A_5\big)|\Theta_\e\cdot\partial_x \Theta_\e|+2\left(\Theta_\e-\vartheta\right)^3\\\\

=-\frac{1}{4\e t\sqrt{\e t}}-\frac{2cA_6R^2x}{(R^2-|x|^2)^2}-\e\Big[\frac{2A_6R^2}{(R^2-|x|^2)^2}+\frac{8A_6R^2|x|^2}{(R^2-|x|^2)^3}\Big]\\
-\big(A_4|x|+A_5\big)\left(\frac{1}{2\sqrt{\e t}}+\frac{A_6R^2}{R^2-|x|^2}+\vartheta\right)\frac{2A_6R^2|x|}{(R^2-|x|^2)^2}+2\left(\frac{1}{2\sqrt{\e t}}+\frac{A_6R^2}{R^2-|x|^2}\right)^3\\\\

\geq -\e \Big[\frac{2A_6R^2d}{(R^2-|x|^2)^2}+\frac{8A_6R^4}{(R^2-|x|^2)^3}\Big]
-\big(A_4R+A_5\big)\left(\frac{1}{2\sqrt{\e t}}+\frac{A_6R^2}{R^2-|x|^2}+\vartheta\right)\frac{2A_6R^3}{(R^2-|x|^2)^2}\\\\

\quad+\frac{3A_6R^2}{R^2-|x|^2}\Big(\frac{1}{2t\e}+\frac{A_6R^2}{\sqrt{\e t}(R^2-|x|^2)}\Big)+\frac{2A_6R^3}{(R^2-|x|^2)^2}\left(\frac{A_6^2R^3}{R^2-|x^2|}-c\right),
\end{array}
$$
where, for the inequality, we have used that $|x|\leq R$. \\
One can verify that the RHS of the above inequality is strictly positive for $R > 1$, $\varepsilon\leq 1$, and $A_6>>\sqrt{T_M}$. Therefore, $\Theta_\e$ is a strict supersolution of \eqref{wp_eps_theta} in $\Big(0,\frac{T_M}{\e}\Big] \times [-R,R]$ and for $\varepsilon\leq 1$.\\
We next prove that
$$
|\Phi_\e (t,x)|\leq \Theta_\e(t,x)\quad\mathrm{in}\;\Big(0,\frac{T_M}{\e}\Big]\times [-R,R].
$$
To this end, we notice that $\Theta_\e(t, x)$ goes to $+\infty$ as $|x|\rightarrow R$ or as $t\rightarrow0$. Therefore, $|\Phi_\e |(t, x) - \Theta_\e(t, x)$ attains its maximum at an interior point of $\Big(0,\frac{T_M}{\varepsilon}\Big]\times [-R,R]$. We choose $t_{\rm max} \leq \frac{T_M}{\e}$ the smallest time such that the maximum of $|\Phi_\e |(t, x)-\Theta_\e(t, x)$ in the set $(0,t_{\rm max}]\times [-R,R]$ is equal to 0. If such $t_{\rm max}$ does not exist, we are done.\\\\
Let $x_{\rm max}$ be such that $|\Phi_\e |(t, x)- \Theta_\e(t, x)\leq |\Phi_\e |(t_{\rm max}, x_{\rm max})- \Theta_\e(t_{\rm max}, x_{\rm max})= 0$ for all $(t, x) \in(0, t_{\rm max})\times [-R,R]$. At such point, we have
$$
0\leq\partial_t\big(|\Phi_\e |- \Theta_\e\big)(t_{\rm max}, x_{\rm max}),\quad 0\leq -\p_{xx}\big( |\Phi_\e |- \Theta_\e\big)(t_{\rm max}, x_{\rm max}),$$
$$  |\Phi_\e |(t_{\rm max}, x_{\rm max})\p_x |\Phi_\e |(t_{\rm max}, x_{\rm max})= \Theta_\e(t_{\rm max}, x_{\rm max})\p_x \Theta_\e(t_{\rm max}, x_{\rm max}).
$$
Combining the above properties with the facts that $|\Phi_\e |$ and $\Theta_\e$ are respectively sub- and strict super-solution of \eqref{wp_eps_theta}, we obtain that
$$
(|\Phi_\e |(t_{\rm max}, x_{\rm max})-\vartheta)^3-(\Theta_\e(t_{\rm max}, x_{\rm max})-\vartheta)^3<0\Rightarrow |\Phi_\e |(t_{\rm max}, x_{\rm max})<\Theta_\e(t_{\rm max}, x_{\rm max}),
$$
which is in contradiction with the choice of $(t_{\rm max}, x_{\rm max})$. We deduce, then that
$$
|\Phi_\e (t,x)|\leq \frac{1}{2\sqrt{\e t}}+\frac{A_6R^2}{R^2-|x|^2}+\vartheta\quad\mathrm{for}\;(t,x)\in\Big(0,\frac{T_M}{\e}\Big] \times [-R,R],\;\forall\;R>1.
$$
We note that for $\e<\e_0$ small enough we have $\frac{T_M}{\e}>\frac{2T}{\e}>\frac{T}{\e}+T>\frac{T}{\e}$. Letting $R\rightarrow\infty$ we deduce that
$$
|\Phi_\e (t,x)|\leq \frac{1}{2\sqrt{\e t}}+A_6+\vartheta\leq \frac{1}{2\sqrt{T}}+A_6+\vartheta\quad\mathrm{for}\;(t,x)\in\left[\frac{T}{\e},\frac{T}{\e}+T \right] \times \R.
$$
Finally we use the periodicity of $\Phi_\e $ to extend the result for all $t\in[0,+\infty)$ and rewriting the result in terms of $\phi_\e$ we obtain for some positive constant $A_7$
\begin{equation}
\label{Lip_phi}
|\partial_x\phi_\e|\leq A_7,\quad\text{ in }[0,+\infty)\times\R.
\end{equation}

\subsection{Derivation of the Hamilton-Jacobi equation with constraint.}
\label{limit_equation}
In this section we derive the Hamilton-Jacobi equation with constraint \eqref{LimitEq} using the regularity estimates in Theorem \ref{regularity_theo}. 

\subsubsection{Convergence along subsequences of $\psi_\e$ and $\rho_\e$.}\label{subsubs:conv_psie_rhoe}
According to section \ref{Regularity}, $\{\psi_\e\}$ is locally uniformly bounded and  equicontinuous, so by the Arzela-Ascoli Theorem after extraction of a subsequence,  $\psi_\e(t,x)$ converges locally uniformly to a continuous function $\psi(t,x)$. Moreover from \eqref{Equi_u}, we obtain that $\psi$ does not depend on $t$, i.e $\psi(t,x)=\psi(x)$.\\
Furthermore, from the uniform bounds on $\rho_\e$ in \eqref{Rho_bouded} we obtain that $|\frac{d\rho_\varepsilon}{dt}|$ is also bounded. Then we apply the Arzela-Ascoli Theorem to guarantee the locally uniform convergence along subsequences of $\rho_\e(t)$, to a function $\widetilde{\varrho}(t)$ as $\e\rightarrow 0$.
\subsubsection{The Hamilton-Jacobi equation with constraint.}
\label{Eq_psi}
Here we use a perturbed test function argument (see for instance \cite{EvansHomog}), in order to prove that, $\psi(x)=\lim_{\e\to0}\psi(t,x)$ is in fact a viscosity solution of the following Hamilton-Jacobi equation. 
\begin{equation}
\label{HJ_psi}
-\left|\p_x \psi+\frac{c}{2}\right|^2=\overline{a}(x)-\overline{\rho}-\frac{c^2}{4},
\end{equation}
where $\overline{\rho}=\frac{1}{T}\int_0^T\widetilde{\varrho}(t)dt$.
We prove that $\psi$ is a viscosity sub-solution and one can use the same type of argument to prove that it is also a super-solution.\\\\
Let us define the auxiliary ``cell problem":
\begin{equation}
\label{CellPb}
\left\{\begin{array}{cr}
\partial_t \phi=a(\SM{e(t)},x)-\widetilde{\varrho}(t)-\overline{a}(x)+\overline{\rho},&(t,x)\in[0,+\infty)\times\R^d,\\ 
\phi(0,x)=0,\\
\phi:\;T-periodic.
\end{array}
\right.
\end{equation}
This equation has a unique smooth solution, that we can explicitly write:
$$
\phi(t,x)=-t(\overline{a}(x)-\overline{\rho})+\int_0^t(a(\SM{e(t)},x)-\widetilde{\varrho}(t))dt.
$$
\\
Let $\varphi\in C^\infty(\R)$ be a test function and assume that $\psi-\varphi$ has a strict local maximum at some point $x_0\in\R$, with $\psi(x_0)=\varphi(x_0)$. We must prove:
\begin{equation}
\label{CondViscSub}
-\left|\p_x \varphi(x_0)+\frac{c}{2}\right|^2-\overline{a}(x_0)+\frac{c^2}{4}+\overline{\rho}\leq 0.
\end{equation}
We define the perturbed test function $\Psi_\e(t,x)=\varphi(x)+\e \phi(t,x)$, such that $\psi_\e-\Psi_\e$ attains a local maximum at some point $(t_\e,x_\e)$. We note that $\Psi_\e$ converges locally uniformly to $\varphi$ as $\e\rightarrow0$ since $\phi$ is locally bounded by definition, and hence one can choose $x_\e$ such that $x_\e\to x_0$ as $\e\to 0$, (see Lemma 2.2 in \cite{GuyBarles}). Then $\Psi_\e$ satisfies:
$$
\frac{1}{\e}\partial_t \Psi_\e(t_\e,x_\e)-\e\p_{xx} \Psi_\e(t_\e,x_\e)-\left|\p_x \Psi_\e(t_\e,x_\e)+\frac{c}{2}\right|^2-a(\SM{e(t_\e)},x_\e)+\frac{c^2}{4}+\rho_\e(t_\e)\leq 0,
$$
since $\psi_\e$ is a solution of \eqref{Eq_psi_eps}. The above line gives:
$$
\begin{array}{c}
\partial_t \phi(t_\e,x_\e)-\e\p_{xx} \varphi(x_\e)-\e^2\p_{xx} \phi(t_\e,x_\e)-\left|\p_x \varphi(x_\e)+\e\p_x \phi(t_\e,x_\e)+\frac{c}{2}\right|^2
\\
-a(\SM{e(t_\e)},x_\e)+\frac{c^2}{4}+\rho_\e(t_\e)\leq 0.
\end{array}
$$
Using \eqref{CellPb}, this last equation becomes:
\begin{equation}
\label{EqLim}
\begin{array}{c}
-\e\p_{xx} \varphi(x_\e)-\e^2\p_{xx} \phi(t_\e,x_\e)-\left|\p_x \varphi(x_\e)+\e\p_x \phi(t_\e,x_\e)+\frac{c}{2}\right|^2
\\
+(\rho_\e-\widetilde{\varrho})(t_\e)-\overline{a}(x_\e)+\overline{\rho}+\frac{c^2}{4}\leq 0.
\end{array}
\end{equation}
Next we pass to the limit as $\e\rightarrow0$. We know from Subsection 3.3.1 that $\rho_\e\rightarrow\widetilde{\varrho}$ locally uniformly as $\e\rightarrow0$. Moreover $\phi$ is smooth with locally bounded derivatives with respect to $x$, thanks to its definition. Using these arguments and letting $\e\rightarrow0$ in \eqref{EqLim} we obtain \eqref{CondViscSub} which implies that $\psi$ is a viscosity sub-solution of \eqref{HJ_psi}.\\\\
Furthermore, note that $\psi$ is also bounded from above, by taking the limit as $\e\rightarrow0$ in \eqref{Bounds_psi_eps}, i.e., 
\begin{equation}
\label{Bouds_psi}
\psi(x)\leq c_1-c_2|x|,
\end{equation}
and attains its maximum. We claim that
$$
\max_{x\in\R}\psi(x)=0.
$$
Indeed, from the upper bound for $\rho_\e$ in \eqref{Rho_bouded}, the definition of $\psi_\e$ in \eqref{HopfCole} and the continuity of $\psi$, we obtain that $\psi(x)\leq0$. Moreover, from the locally uniform convergence of $\psi_\e$ to $\psi$, as $\e\to0$, and \eqref{Bounds_psi_eps} we deduce that  $\max_{x\in\R} \psi(x)<0$ implies that $\psi_\e(x)<-\beta$, for all $x\in\R$ and $\e\leq\e_0$ and some positive constant $\beta$. This is in contradiction with the fact that $\rho_\e$ is bounded by below by a positive constant $\rho_m$ (we refer to section 4.3 of \cite{Mirrahimi&Figueroa_pprint} for more details). \\


\section{Uniqueness of the viscosity solution to (\ref{LimitEq}) and explicit identification}
\label{Uniqueness}

In this section we {provide the proof of Theorem \ref{Unique_Identif}.}   To this end, we first derive an equivalent Hamilton-Jacobi equation to \eqref{LimitEq} by mean of the Liouville transformation  and prove some properties of the eigenvalue $\lambda_{c,\e}$. We then prove the uniqueness of the viscosity solution to such equivalent equation. This allows us to establish the uniqueness of the solution to \eqref{LimitEq} and to identify it explicitly. {Finally, we provide the proof of the convergence of $n_\e$ to the Dirac mass given by \fer{Dirac_mass_conv}.}

\subsection{Derivation of an equivalent Hamilton-Jacobi equation.}\label{subs:equiv_HJ-eqtn}
In this subsection, we define a new function 
\begin{equation}
\label{psi_u}
u(x) := \psi(x)+\frac{c}{2}x,
\end{equation}
which solves the following Hamilton-Jacobi equation in the viscosity sense
\begin{equation}
\label{HJ_u}
-|\p_xu|^2=\overline{a}(x)-\overline{\rho}-\frac{c^2}{4}.
\end{equation}
Note that the   transformation \eqref{psi_u} is indeed analogous to the Liouville transformation presented in Section   \ref{Liouville}.\\
{We} have the following boundedness result for $u$.
\begin{lemma}
\label{lem:Upp_bound_u}
{The function $u(x)$,  defined by \eqref{psi_u}, is locally bounded} and satisfies
\begin{equation}
\label{Upp_bound_u}
-A_1|x|^2-A_2\leq u(x)\leq c_1-c_2|x|+\frac{c}{2}x,\quad \forall\ x\in\R,
\end{equation}
where  the constants $A_1$, $A_2$, $c_1$ and $c_2$ are given in \eqref{Bounds_psi_eps}. 
\end{lemma}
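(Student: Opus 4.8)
The plan is to obtain the bounds on $u$ directly from the corresponding bounds on $\psi$ that were already established while proving Theorem \ref{Conv_HJ}, exploiting the fact that, by \eqref{psi_u}, $u$ differs from $\psi$ only by the affine term $\frac{c}{2}x$.

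First I would recall that, by Theorem \ref{regularity_theo}(i), the family $(\psi_\e)_\e$ satisfies the uniform two-sided bound \eqref{Bounds_psi_eps}, i.e. $-A_1|x|^2-\frac{c}{2}x-A_2\leq \psi_\e(t,x)\leq c_1-c_2|x|$ on $\R_+\times\R$. Invoking the locally uniform convergence $\psi_\e\to\psi$ obtained in Subsection \ref{subsubs:conv_psie_rhoe} (together with the lower bound \eqref{Low_Bound_psi_e} and the upper bound \eqref{Bouds_psi}), and letting $\e\to0$, one gets
\[
-A_1|x|^2-\frac{c}{2}x-A_2\ \leq\ \psi(x)\ \leq\ c_1-c_2|x|,\qquad \forall\,x\in\R,
\]
which is exactly the pair of pointwise bounds already appearing in \eqref{LimitEq}.

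Next, from the definition $u(x)=\psi(x)+\frac{c}{2}x$ I would simply add $\frac{c}{2}x$ to each term of this three-part inequality, obtaining $-A_1|x|^2-A_2\leq u(x)\leq c_1-c_2|x|+\frac{c}{2}x$, which is precisely \eqref{Upp_bound_u}. Local boundedness of $u$ is then immediate: the lower bound $-A_1|x|^2-A_2$ and the upper bound $c_1-c_2|x|+\frac{c}{2}x$ are both continuous, hence bounded on every compact subset of $\R$.

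As for the main obstacle, there is essentially none of substance here: the lemma is a bookkeeping step that records the effect of the Liouville-type shift \eqref{psi_u} on the a priori bounds for $\psi$. The only point requiring (minimal) care is that the limiting bounds on $\psi$ used above are exactly those stated in \eqref{LimitEq}, which were justified when deriving the Hamilton--Jacobi equation with constraint; everything else is an algebraic rearrangement of a chain of inequalities.
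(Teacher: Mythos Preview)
Your proof is correct and follows essentially the same approach as the paper: recall the uniform bounds \eqref{Bounds_psi_eps} on $\psi_\e$, pass to the limit to obtain the corresponding bounds on $\psi$, and then add $\frac{c}{2}x$ using the definition \eqref{psi_u} of $u$. The paper's proof is slightly terser but identical in substance.
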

\begin{proof}
From Subsection \ref{Regularity} we got uniform bounds for $\psi_\e$ in  \eqref{Bounds_psi_eps}, which lead to bounds on $\psi$. That is
$$
-A_1|x|^2-\frac{c}{2}x-A_2\leq\psi(x)\leq c_1-c_2|x|.
$$
Then, the bounds \eqref{Upp_bound_u} follow directly from the definition of $u(x)$ in \eqref{psi_u}. 
\end{proof}\\\\
Therefore, we conclude that the function $u$ satisfies \eqref{Comp_Pb_u}. {In Subsection \ref{sec:uniq_u} we will prove a uniqueness result for \fer{Comp_Pb_u} which will imply the uniqueness of $\psi$, the solution to \eqref{LimitEq}.}


\subsection{Some properties of the eigenvalue $\lambda_{c,\e}$.}
\label{ConvEigenvalue}
In this  subsection we prove Theorem \ref{Unique_Identif}-(i). {We also establish that $\lambda_{c,\e}$ is uniformly bounded above and below by negative constants and derive some properties of the limit, along subsequences, of $\lambda_{c,\e}$.} \\

\noindent
From the equation \eqref{Eigenvalue_lambda_ceps} we can integrate in $\R$, divide by $\int_\R p_{c\e}(t,x)dx$ and integrate again in $t\in[0,T]$ and obtain  
\begin{equation}
\label{Q_eps&lambda_eps}
\lambda_{c,\e}=-\dfrac{1}{T}\dis\int_0^TQ_{c\e}(t)dt.
\end{equation}
where $Q_{c\e}(t)$ is defined analogously to \eqref{PyQ} from the periodic eigenfunction $p_{c\e}$. {We next use the relationship between  the solution $n_\e$ to \eqref{n_eps} and  the eigenfunction $p_{c\e}$ to {obtain the first claim of Theorem \ref{Unique_Identif}.} Indeed, from equation \eqref{n_eps} after an integration in $x\in\R$ we obtain:
$$
\frac{d \rho_\e(t)}{d t}=\int_\R n_\e(t,x)a(\SM{e(t)},x)dx- \rho^2_\e(t).
$$
We divide by $\rho_\e(t)$ and use the relation between $n_\e$ and  $p_{c\e}$ inside of the integral, that is:
$$
\rho_\e(t)+ \frac{d}{dt}\ln\rho_\e(t)=\frac{\int_\R p_{c\e}(t,x)a(\SM{e(t)},x)dx}{\int_\R p_{c\e}(t,y)dy}.
$$
Note that the RHS is exactly $Q_{c\e}$.  We then integrate in $[0,T]$ and using \eqref{Q_eps&lambda_eps} and the $T-$periodicity of $\rho_\e$ we deduce that
\begin{equation}
\label{rho_e_lamnda_e}
\overline{\rho}_\e=-\lambda_{c,\e},
\end{equation}
}

\noindent
{
We next prove that $\lambda_{c,\e}$ is uniformly bounded above and below by negative constants. Combining \ref{Q_eps&lambda_eps} and  \eqref{a_W3inf} we obtain that
$$
-d_0\leq \lambda_{c,\e}.
$$
Moreover, since we are in the case {$c<\dis\liminf_{\e\to0} c^*_{\e}$}, we can find a positive constant $\tau$ such that for every $\e\leq\e_0$, with $\e_0$ small enough we have $c< c^*_{\e}-\tau$. Then from the definition of $c^*_{\e}$ we deduce {that}
$$
c<2\sqrt{-\lambda_{0,\e^2}}-\tau=2\sqrt{-\lambda_{c,\e}+\frac{c^2}{4}}-\tau,
$$
which leads to 
$$
\lambda_{c,\e}<-\frac{c\tau}{2}-\frac{\tau^2}{4},
$$
and hence, for $\lambda_m =\frac{c\tau}{2}+\frac{\tau^2}{4}$ we obtain
\begin{equation}
\label{lambda_m}
\lambda_{c,\e}\leq -\lambda_m<0.
\end{equation}
Thus $(\lambda_{c,\e})_\e$ is  uniformly bounded {above and below by negative constants}. This implies that we can extract a subsequence, still called $\lambda_{c,\e}$, which converges as $\e\to 0$ to some negative value $\lambda_1$. Moreover passing to the limit as $\e\to0$ in assumption \eqref{a_lambda_neg} we obtain, for all such limit values $\lambda_1$,
\begin{equation}
\label{a_neg_lambda_limit}
\overline{a}(x)\leq -\delta-\lambda_1,\quad \forall|x|\geq R_0.
\end{equation}
Note that passing to the limit of $\overline\rho_\e$ as $\e\to 0$ along the same subsequence, we obtain that
\begin{equation}
\label{rho_lambda1}
\overline{\rho}=-\lambda_1.
\end{equation}
}

\subsection{Uniqueness and explicit formula for $u(x)$.}
\label{sec:uniq_u}
In this subsection we prove the uniqueness of the viscosity solution of the Hamilton-Jacobi equation \eqref{Comp_Pb_u}. To this end we consider the Hamilton-Jacobi equation as follows
\begin{equation}
\label{MyH}
-\vert \p_x u\vert^2=h(x),\quad x\in\Omega,
\end{equation}
where $h\in C^1(\R)$. Note that this corresponds to our problem for $h(x)=\bar h(x):= \overline{a}(x)-\overline{\rho}-\frac{c^2}{4}$. \\
We divide the  proof of the uniqueness result into several steps. We first prove that, in the case where $\Omega$ is an open bounded domain and $h<0$ in $\Omega$, a viscosity solution to \eqref{MyH} can be uniquely determined {by} its values on the boundary of $\Omega$. We then use this property and \eqref{Upp_bound_u} to prove that in our problem it is not possible that $\bar h(x)<0$ for all $x\in\R$. We prove indeed that $\max \bar h(x)=0$ and this maximum is attained only at the point $x_m$. Finally we use these properties to conclude that $u$ is indeed uniquely determined by an explicit formula.

\textbf{Step 1: If $h<0$ and $\Omega$ is bounded then the viscosity solution to \eqref{MyH} is uniquely determined by its values on the boundary of $\Omega$.} Suppose that $h(x)<0$, for every $x\in\Omega$. For this problem we obtain uniqueness of the viscosity solution thanks to {a representation formula for the function $u$. Indeed, for $\Omega$ bounded we define $L(x,y)$ as follows}
\begin{equation}
\label{Def_L}
\begin{array}{rcl}
L(x,y)&=&\sup\left\{\int_0^{T_0}-\sqrt{-h(\xi(t))}dt/ (T_0,\xi)\text{ such that }\xi(0)=x,\ \xi(T_0)=y, \right. \\ 
&&\left. \xi(t)\in\Omega, \forall t\in[0,T_0], \left\vert\frac{d\xi}{dt}\right\vert\leq 1\text{ a.e in }[0,T_0]\right\},
\end{array}
\end{equation}
{and in \cite{Lions}-Chapter 5 the following is proved.}
\begin{proposition}
\label{Uniq_bounded_domain}
Assume that $h(x)<0$, $\forall x\in\Omega$, {with $\Omega$ a bounded domain}. The function 
$$
u=\dis\inf_{y\in\p\Omega}[\varphi(y)+L(x,y)],
$$
is the unique viscosity solution of 
$$
\vert D u\vert=\sqrt{-h(x)} \text{ in } \Omega;\quad u=\varphi \text{ on } \p\Omega.
$$
\end{proposition}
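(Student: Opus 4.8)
The result is a classical one from the theory of first-order Hamilton--Jacobi equations and deterministic optimal control, and is essentially what is proved in \cite{Lions}, Chapter~5; the plan is the standard three-step scheme. First I would identify $u(x)=\inf_{y\in\p\Omega}[\varphi(y)+L(x,y)]$, with $L$ given by \eqref{Def_L}, as the value function of the exit-time control problem in which one drives a trajectory $\xi$ with $|\dot\xi|\le1$ out of $\overline\Omega$, accumulating the running cost $\sqrt{-h(\xi(t))}$ and paying the exit cost $\varphi$ at the first boundary point reached. Next I would show, via the dynamic programming principle, that $u$ is a viscosity solution of \eqref{MyH} attaining the datum $\varphi$ on $\p\Omega$. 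Finally I would deduce uniqueness from a comparison principle.

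For the first two steps: since $\Omega$ is bounded and $h\in C^1(\R)$ with $h<0$ on $\Omega$, the weight $\sqrt{-h}$ is continuous and strictly positive on $\Omega$, so $L(x,y)$ is finite, vanishes on the diagonal, depends continuously on $(x,y)$ on $\overline\Omega\times\overline\Omega$, and satisfies the concatenation inequality characteristic of a geodesic-type distance; hence $u$ is finite and continuous on $\overline\Omega$, and the concatenation inequality together with the usual compatibility of the boundary datum $\varphi$ gives $u=\varphi$ on $\p\Omega$. I would then establish the dynamic programming principle
\[
u(x)=\inf\Big\{\int_0^{\tau}\sqrt{-h(\xi(t))}\,dt+u(\xi(\tau))\;:\;\xi(0)=x,\ |\dot\xi|\le1,\ \xi([0,\tau])\subset\overline\Omega\Big\},
\]
the infimum being over admissible curves and stopping times $\tau$ not exceeding the first exit time, by concatenating near-optimal trajectories. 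The usual perturbed-test-function argument then gives the two viscosity inequalities at an interior point $x_0$: if $\phi\in C^1$ touches $u$ from above at $x_0$, inserting the straight trajectories $\xi(t)=x_0+tq$ for arbitrary $|q|\le1$, Taylor-expanding, dividing by $\tau$ and letting $\tau\to0$ yields $|D\phi(x_0)|\le\sqrt{-h(x_0)}$; if $\phi$ touches $u$ from below at $x_0$, near-optimal trajectories give $|D\phi(x_0)|\ge\sqrt{-h(x_0)}$.

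The uniqueness is the substantive part. The Hamiltonian $H(x,p)=|p|-\sqrt{-h(x)}$ is convex and coercive in $p$, with $\sqrt{-h}$ continuous on $\Omega$, so the classical comparison principle for the Dirichlet problem for such eikonal-type equations applies: if $u_1$ is a subsolution and $u_2$ a supersolution, both continuous on $\overline\Omega$ with $u_1\le u_2$ on $\p\Omega$, then $u_1\le u_2$ in $\Omega$; applied both ways to two solutions with datum $\varphi$, this gives uniqueness. Alternatively one can argue directly by verification: along any admissible trajectory a viscosity solution $v$ satisfies $\frac{d}{dt}v(\xi(t))\ge-\sqrt{-h(\xi(t))}$, so $v(x)\le\varphi(\xi(\tau))+\int_0^\tau\sqrt{-h(\xi)}\,dt$ and hence $v\le u$ after optimizing over $\xi$, while the reverse inequality follows from the supersolution property and near-optimal curves.

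The hard part will be this comparison/verification step for a general bounded $\Omega$, together with the degeneracy of $L$ at boundary points where $h$ vanishes: there $\sqrt{-h}$ is not bounded below, so the controllability to such points and the pointwise attainment of the boundary datum --- which requires the compatibility condition on $\varphi$ --- must be handled with care. In the one-dimensional situation where the proposition is actually used, $\Omega$ is an interval, $h<0$ on its interior and vanishes only at $x_m$, and the unique back-and-forth-free geodesic makes $L$ fully explicit, so these difficulties are benign there and the compatibility of $\varphi$ holds with equality.
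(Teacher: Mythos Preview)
The paper does not actually prove this proposition: it is quoted directly from \cite{Lions}, Chapter~5, and used as a black box in the subsequent uniqueness argument for \eqref{Comp_Pb_u}. Your sketch is precisely the classical optimal-control proof one finds there (value-function identification of $u$, dynamic programming principle, viscosity inequalities, comparison), so in that sense your approach coincides with what the citation points to.

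One remark: you worry at the end about degeneracy of $L$ where $h$ vanishes on the boundary and about the compatibility condition on $\varphi$. In the paper's setting the hypothesis is $h<0$ on $\overline\Omega$ (the intervals $(-R',x_m)$ and $(x_m,R')$ are chosen so that $\bar h<0$ in their interior, and at the endpoints $\pm R'$ one has $\bar h<0$ by \eqref{a_neg_lambda_limit}; the only zero of $\bar h$ is at $x_m$, which is treated as a boundary point), so the weight $\sqrt{-h}$ is bounded below on each subinterval and these subtleties do not arise. Likewise, the ``compatibility'' of the boundary datum is automatic here because $\varphi$ is the trace of a global viscosity solution, not an arbitrary prescribed function. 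So the hard part you flag is not present in the application the paper makes of the proposition.
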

\textbf{Step 2: $\max_{x\in\R} \bar h(x)=\bar h(x_m)=0$ and the maximum is only attained at this point.}
We assume in the contrary that $\max_{x\in\R}\bar h(x)<0$. We consider $\Omega=B_{R'}=(-R',R')$ for $R'>0$, to be chosen later. According to step 1, we can express the value of the viscosity solution  of \eqref{MyH} at the point $0$, for $h(x)=\bar h(x)$, as follows:
{$$
u(0)=\max \left\{u(-R')-\left\vert\int_{-R'}^0\sqrt{-\bar h(y)}dy\right\vert; u(R')-\left\vert\int^{R'}_0\sqrt{-\bar h(y)}dy\right\vert\right\}.
$$}
Note that thanks to \eqref{a_neg_lambda_limit} and \eqref{rho_lambda1}, we obtain that 
$$
\sqrt{\delta+\frac{c^2}{4}}\leq \sqrt{-\bar h(y)},\quad \forall \vert y\vert\geq R_0.
$$
We deduce that, for all $R'>R_0$,
$$
\int_{-R'}^0\sqrt{-\bar h(y)}dy\geq \sqrt{\delta+\frac{c^2}{4}}\,(R'-R_0),\quad
\text{ and }\quad
\int_0^{R'}\sqrt{-\bar h(y)}dy\geq \sqrt{\delta+\frac{c^2}{4}}\,(R'-R_0).
$$
Next we combine the above inequalities with the third line of \eqref{Comp_Pb_u} to obtain  
$$
\begin{array}{rl}
u(0)\leq\max &\left\{c_1-R'\sqrt{\delta+\frac{c^2}{2}}-\left(R'-R_0\right)\sqrt{\delta+\frac{c^2}{4}};\right.\ 
\\
& \left. \; \; c_1+cR'-R'\sqrt{\delta+\frac{c^2}{2}}-\left(R'-R_0\right)\sqrt{\delta+\frac{c^2}{4}}\right\},
\end{array}
$$
for $c_1$ given in \eqref{Comp_Pb_u}. This implies that, taking $R'$ arbitrarily large, $u(0)$  is arbitrarily small which is a contradiction. Therefore the assumption on $\bar h(x)$ of being strictly negative in $\Omega$ is false. \\
We have proved that $\bar h(x)$ vanishes at some point $x\in\R$. Note also from \eqref{HJ_u} that
$$
\bar{a}(x)-\bar{\rho}-\frac{c^2}{4}\leq0,
$$
and $\max_{x\in\R} \bar h(x)$ is attained at the unique maximum point of $\bar a$, which is $x_m$.\\

\textbf{Step 3: Identification of  $u$ in $\R$.} We now prove that the solution $u$ is uniquely determined by its value at the maximum point of $\bar h(x)$. That is, for all $x\in \R$\\
\begin{equation}
\label{u_x_m}
u(x)=-\left\vert\dis\int_{x_m}^x\sqrt{-\bar h(y)}dy\right\vert+u(x_m).
\end{equation}
To this end we choose $0<R$, and $0<R'$ such that $R<<R'$ and we consider the domain $\overline{B_{R'}}=[-R',x_m]\cup[x_m,R']$. Note that $\bar h<0$ in the sets $(-R',x_m)$ and $(x_m,R')$. We can thus apply Proposition \ref{Uniq_bounded_domain} in these domains:
$$
u(x)=\max \left\{u(-R')-\left\vert\int_{-R'}^{x}\sqrt{-\bar h(y)}dy\right\vert; u(x_m)-\left\vert\dis\int_{x_m}^x\sqrt{-\bar h(y)}dy\right\vert .  \right\}, \quad \forall x\in (-R,x_m),
$$
$$
u(x)=\max \left\{ u(R')-\left\vert\int_{R'}^{x}\sqrt{-\bar h(y)}dy\right\vert ; u(x_m)-\left\vert\dis\int_{x_m}^x\sqrt{-\bar h(y)}dy\right\vert .  \right\}, \quad \forall x\in (x_m,R),
$$
We next prove the following inequalities for $R'$ large enough,
$$
\left\vert\dis\int_{x_m}^x\sqrt{-\bar h(y)}dy\right\vert-u(x_m)\leq
\left\vert\int_{-R'}^x\sqrt{-\bar h(y)}dy\right\vert-u(-R'),\quad \forall x\in (-R,x_m),
$$
$$
\left\vert\dis\int_{x_m}^x\sqrt{-\bar h(y)}dy\right\vert-u(x_m)\leq 
\left\vert \int^{x}_{R'}\sqrt{-\bar h(y)}dy\right\vert-u(R'), \quad \forall x\in (x_m,R).
$$
 and combine them with the above lines to obtain \eqref{u_x_m} for all $x\in[-R,R]$. Since $R$ is arbitrary we thus obtain \eqref{u_x_m}.  \\
Suppose  that $x_m<x<R$. We prove the second inequality (the first one follows from an analogous argument). We claim that, for $R'$ large enough
\begin{equation}
\label{Sup_Ineq}
-\dis\int_{x_m}^x\sqrt{-\bar h(y)}dy+u(x_m)+\int_{x}^{R'}\sqrt{-\bar h(y)}dy-u(R')\geq 0. 
\end{equation}
Indeed, for $x\in [x_m,R]$ we have
\begin{equation}
\label{xm_x}
-\int_{x_m}^{x}\sqrt{-\bar h(y)}dy\geq -\int_{x_m}^{R}\sqrt{-\bar h(y)}dy.
\end{equation}
Moreover, from the upper bound for $u$ in \eqref{Upp_bound_u} we obtain for all $x\in B_{R'}(x_m)$,
\begin{equation}
\label{u_R'}
u(x)\leq c_1-c_2|x|+\frac{c}{2}x, \quad \Rightarrow u(R')\leq c_1+\frac{c}{2}R'.
\end{equation}
Furthermore,  following similar arguments as in the previous step we obtain that: 
\begin{equation}
\label{h_delta_c}
\sqrt{-\bar h(y)}\geq\sqrt{\delta+\frac{c^2}{4}},\quad \forall y\in(R_0,R').
\end{equation}
Finally, putting together \eqref{xm_x}, \eqref{u_R'} and \eqref{h_delta_c} we obtain:
$$
\begin{array}{l}
-\dis\int_{x_m}^x\sqrt{-\bar h(y)}dy+u(x_m)+\int_{x}^{R'}\sqrt{-\bar h(y)}dy-u(R')
\\
\geq -\dis\int_{x_m}^R\sqrt{-\bar h(y)}dy+u(x_m)+(R'-R_0)\sqrt{\delta+\frac{c^2}{4}}-\frac{c}{2}R'-c_1 
\\ \geq0,
\end{array}
$$
for $R'$ large enough.\\

\textbf{Step 4: Uniqueness of $u$.}
We finally  determine  the value of $u$ at $x_m$ which leads to the uniqueness and an explicit formula for   $u$   thanks to \fer{u_x_m}.\\
Replacing the value of $\overline h$ in \eqref{u_x_m}, we obtain
\begin{equation}
\label{Solution_u}
u(x)=u(x_m)-\left\vert \dis\int_{x_m}^{x}\sqrt{\overline{\rho}+\frac{c^2}{4}-\overline{a}(y)}dy\right\vert,\quad \forall x\in\R.
\end{equation}
This directly implies that $u$ is in fact a classical solution for $x\in\R$ which attains its maximum at $x=x_m$. We also recall the second property in \fer{Comp_Pb_u}, that is 
$$
\max_{x\in \R} u(x)-\f{c}{2}x=0.
$$
We denote the set of maximum points of $u(x)-\f{c}{2}x$ by $X^*$, i.e
$$
X^*:=\{x^* \in \R \text{ such that  $u(x^*)-\f{c}{2}x^*=0$}\}.
$$
Let $x^*\in X^*$, we evaluate the above formula of $u$ at $x^*$ in order to obtain an expression for $u(x_m)$.  This implies
\begin{equation}
\label{Solution_psi}
u(x)=\frac{c}{2} x^* +\left\vert \int_{x_m}^{x^*}\sqrt{\overline{a}(x_m)-\overline{a}(y)}dy\right\vert-\left\vert \int_{x_m}^{x}\sqrt{\overline{a}(x_m)-\overline{a}(y)}dy\right\vert,\quad \forall x\in\R.
\end{equation}
Moreover, we have
$$
u(x_m)-\f{c}{2}x_m \leq u(x^*)-\f{c}{2}x^*=0,
$$
which implies that 
$$
\frac{c}{2}(x^*-x_m)\leq 0,
$$
and hence $x^*\leq x_m$. Note also  that   we have
$\overline{a}(x_m)=\overline{\rho}+\frac{c^2}{4}$ from step 2
and  $\bar a(x^*)=\bar \rho$ thanks to \eqref{LimitEq}. Combining these properties with  assumption \eqref{x_var} it follows that $x^*=\overline{x}$, and hence $u$ is uniquely determined. As a consequence we obtain the explicit formula   \eqref{Exp_Sol_psi} for $\psi(x)$. This ends the proof of Theorem \ref{Unique_Identif}-(ii).

\subsection{Convergence to the Dirac mass.}\label{subs:conv_dirac_mass}
We deal in this subsection with the result for the convergence of $n_\varepsilon$, that is Theorem \ref{Unique_Identif}-(iii).\\
Call $f_\e(t,x)=\dfrac{n_\e(t,x)}{\rho_\e(t)}$, then $f_\e$ is uniformly bounded in $L^\infty(\R_+,L^1(\R))$. Next, we fix $t\geq0$, and we prove that $f_\e(t,\cdot)$, converges, along subsequences, to a measure, as follows
$$
f_\varepsilon(t,\cdot)\rightharpoonup\delta(\cdot-\bar{x})\quad\mathrm{as}\quad \varepsilon\rightarrow0,
$$
weakly in the sense of measures.\\
Indeed, we already know that 
$$
\max_{x\in\R}\psi(x)=\psi(\bar{x})=0\quad \text{ and }\quad \psi(x)\leq c_1-c_2\vert x\vert,
$$
for $\bar x$ given in Theorem \ref{Unique_Identif}. This implies that for any $\zeta>0$, there exists $\beta>0$ such that $\psi(x)\leq -\beta$ for every $x\in\R\setminus [\bar x-\zeta,\bar x+\zeta]$. \\
We denote $\mathcal{O}=\R\setminus [\bar{x}-\zeta,\bar{x}+\zeta]$  and choose $\chi\in C_c(\mathcal{O})$, such that $\mathrm{supp}\; \chi\subset \mathcal{K}$, for some compact set $\mathcal{K}$, then it follows that
$$
\left|\int_{\mathcal{O}}f_\e(t,x)\chi(x)dx\right|\leq \frac{1}{\rho_m}\int_{\mathcal{O}}e^{\frac{\psi_\e(t,x)}{\e}}|\chi(x)|dx\leq \frac{1}{\rho_m}\int_{\mathcal{K}}e^{{\frac{\psi_\e(t,x)}{\e}}}|\chi(x)|dx.
$$
From the locally uniform convergence of $\psi_\e$, to $ \psi(x)$, we obtain that there exists $\e_0>0$ such that $\forall\e<\e_0$,  $\psi_\e(t,x)\leq-\frac{\beta}{2}$, $\forall x\in\mathcal{K}$, and hence
$$
\int_{\mathcal{K}}e^{\frac{\psi_\e(t,x)}{\e}}|\chi(x)| dx\leq\int_{\mathcal{K}}e^{-\frac{\beta}{2\e}}|\chi(x)|dx\rightarrow0\quad\mathrm{as}\;\e\rightarrow0,
$$
since $\chi$ is bounded in $\mathcal{K}.$ Therefore, thanks to the  uniform $L^1$ bound of $f_\e$, we obtain that $f_\e$ converges weakly in the sense of measures and along subsequences to $\mu\delta(x-\bar{x})$ as $\e\to0$.  Then to prove that in fact, $\mu=1$ we can proceed as in Section 4.3 in \cite{Mirrahimi&Figueroa_pprint}. 

 \subsection{Identification of the limit of $\rho_\varepsilon$.}\label{identif_rhoe}
In order to identify the limit of $\rho_\varepsilon$ we first write thanks to Proposition \ref{ConvLambda} and Proposition \ref{prop2}  the following explicit expression for $\rho_\e$:
\begin{equation}
\label{Exp_rho_eps}
\rho_\e(t)=\frac{1-\exp\left[-\dis\int_0^{T}Q_{c\e}(s)ds\right]}{\exp\left[-\dis\int_0^{T}Q_{c\e}(s)ds\right]\dis\int_t^{t+T}\exp\left[\dis\int_t^s Q_{c\e}(\theta)d\theta\right]ds},
\end{equation}
where $Q_{c\e}$ is defined analogously to \eqref{PyQ}, using the periodic eigenfunction $p_{c\e}$ of problem \eqref{Eigenvalue_lambda_ceps}. \\
We then compute the limit of $Q_{c\e}$ as $\e\to0$. We know that $p_{c\e}(t,x)=\dfrac{n_\e(t,x)}{\rho_\e(t)}\dis\int_{\R}p_{c\e}(t,y)dy$. {Replacing $p_{c\e}$ by this quantity in the formula for $Q_{c\e}$} we obtain
$$
\begin{array}{rl}
Q_{c\e}(t)&=\dfrac{\dis\int_{\R}a(e(t),x)p_{c\e}(t,x)dx}{\dis\int_{\R}p_{c\e}(t,x)dx}
=\dfrac{\dis\int_{\R}a(\SM{e(t)},x)\dfrac{n_\e(t,x)}{\rho_\e(t)}\int_{\R}p_{c\e}(t,y)dy dx}{\dis\int_{\R}p_{c\e}(t,x)dx}\\
&=\dfrac{\dis\int_{\R}a(e(t),x)n_\e(t,x)dx}{\rho_\e(t)}.
\end{array}
$$
{From the previous subsection} we deduce that:
$$
\lim_{\e\rightarrow0}Q_{c\e}(t)=\lim_{\e\rightarrow0}\int_{\R}f_\e(t,x)a(\SM{e(t)},x)dx=a(\SM{e(t)},\bar x).
$$
Finally we can pass to the limit in the expression \eqref{Exp_rho_eps} for $\rho_\e$, to obtain the following explicit formula for $\tilde{\varrho}$
\begin{equation}
\label{Exp_rho}
\widetilde{\varrho}(t)=\frac{1-\exp\left[-\dis\int_0^{T}a(s,\bar x)ds\right]}{\exp\left[-\dis\int_0^{T}a(s,\bar x)ds\right]\dis\int_t^{t+T}\exp\left[\dis\int_{t}^s a(\theta,\bar x)d\theta\right]ds},
\end{equation} 
 which is in fact the unique periodic solution of the equation \eqref{eq_varrho} {thanks to Proposition \ref{prop2}}. Therefore using the convergence result for $\rho_\varepsilon$ we deduce finally \eqref{Dirac_mass_conv} and this ends the proof of Theorem \ref{Unique_Identif}.

\section{Approximations of the eigenvalue}
\label{Eigen_Approx}

In this section we prove Theorem \ref{Asymp_Expansion}, i.e., the asymptotic expansions \eqref{Eigenvalue_Approx} and \eqref{Crit_Speed_Approx}. Note that the first equality in \eqref{Eigenvalue_Approx} has been already obtained in Section \ref{ConvEigenvalue}. \\
To this end we develop an asymptotic approximation {of order $\e$ of the eigenvalue $\lambda_{c,\e}$ given by the eigenvalue problem \eqref{Eigenvalue_lambda_ceps}}.
To obtain such asymptotic expansion we construct an approximate eigenfunction
$\widetilde{p}_\e$ corresponding to an approximate eigenvalue $\widetilde{\lambda}_\e$ which solves an equation close to \eqref{Eigenvalue_lambda_ceps}. We then use Proposition \ref{Prop1} to prove that $\widetilde{\lambda}_\e$ approximates $\lambda_{c,\e}$ with an error of order $\e^2$. \\
To construct an approximate eigenfunction, we first try to approximate $w_\e$, obtained from the Hopf-Cole transformation of $p_{c\e}$ as follows:
\begin{equation}
\label{p_ce_e=w_e}
p_{c\e}(t,x)=\frac{1}{\sqrt{2\pi\e}}e^{\frac{w_\e(t,x)}{\e}}.
\end{equation}
One can verify that $w_\e$ solves:
\begin{equation}
\label{v_eps}
\frac{1}{\e}\partial_tw_{\e}-\e\partial_{xx}w_{\e}=\left\vert\partial_x w_\e+\frac{c}{2}\right\vert^2+a(\SM{e(t)},x)+\lambda_{c,\e}-\frac{c^2}{4}.
\end{equation}
We can obtain similar bounds for $w_\e$ as for $\psi_\e$, which guarantee the convergence along subsequences of $w_\e$ to certain function $w=w(x)$, which is in fact the limit of the whole sequence $w_\e$, and satisfies the following Hamilton-Jacobi equation in the viscosity sense
\begin{equation}
\label{HJ_w}
-\left\vert\partial_x w+\frac{c}{2}\right\vert^2=\overline{a}(x)+\lambda_1-\frac{c^2}{4}.
\end{equation}

\begin{remark}
Note that in order to obtain the limit equation \eqref{HJ_w} we can argue exactly as for $\psi_\e$ in Section \ref{Eq_psi}, by a "perturbed test function" argument, (see also \cite{Mirrahimi&Figueroa_pprint}). 
\end{remark}
{Note that, thanks to theorems \ref{Conv_HJ} and \ref{Unique_Identif},  $\lambda_1=-\overline \rho$ and   $ \psi(x)$ is a solution to \eqref{HJ_w}.}   We then, write (formally)
\begin{equation}
\label{Formal_exp_phi_lambda}
w_\e(t,x)=\psi(x)+\e \phi(t,x)+\e^2\omega(t,x)+o(\e^2)\quad\text{and} \quad \lambda_{c,\e}={-\overline \rho}+\e\lambda_2+o(\e),
\end{equation}
for some $T-$periodic functions $\phi$ and $\omega$, and we construct the following approximated eigenpair:
\begin{equation}
\label{Def_approx_eigenpair}
\widetilde{\psi_\e}=\psi+\e \phi\quad\text{and} \quad \widetilde{\lambda}_{\e}={-\overline \rho}+\e\lambda_2.
\end{equation} 
We then substitute this pair $(\widetilde{\psi_\e}, \widetilde{\lambda}_{\e})$ into \eqref{v_eps} and obtain:
\begin{equation}
\label{Eq_Approx_eigenpair}
\p_t \phi-c\psi_x-c\e\p_x\phi-\e\psi_{xx}-\e^2\p_{xx} \phi=|\psi_x +\e\p_x \phi|^2+a(\SM{e(t)},x) {-\overline \rho}+\e\lambda_2+o(\e),
\end{equation}
where the notations $\psi_x$ and $\psi_{xx}$ correspond respectively to the first and second derivative of $\psi$.\\ 
Regrouping {the terms with} similar powers of $\e$ we obtain the following system for $\phi$,
\begin{equation}
\label{System_approx}
\left\{
\begin{array}{l}
\dis\p_t \phi=\left|\psi_x +\frac{c}{2}\right|^2+a(\SM{e(t)},x)-\frac{c^2}{4} {-\overline \rho},\\\\
\dis-\psi_{xx}=[2\psi_x +c]\frac{1}{T}\int_0^T\p_x \phi(t,x)dt+\lambda_2.
\end{array}
\right.
\end{equation}
We remark that the previous system has a unique solution $\phi$ up to addition by a constant. Indeed, from equation \eqref{LimitEq} we obtain
$$ 
\dis\p_t \phi=a(\SM{e(t)},x)-\bar{a}(x).
$$
Integrating in $[0,t]$ leads to
$$
\phi(t,x)=\phi(0,x)+\int_0^ta(\tau,x)d\tau-t\bar{a}(x),
$$
and the value of $\phi(0,x)$ can be obtained from the second equation in \eqref{System_approx} once we fix $\phi(0,x_m)$.
Note that here we use the fact that $2\psi_x + c$ vanishes only at the point $x_m$. \\
We now define $\widetilde{p}_\e(t,x):=\frac{1}{\sqrt{2\pi\e}}e^{\frac{\widetilde{\psi}_\e(t,x)}{\e}}$, and use the system \eqref{System_approx}, to obtain the equality: 
\begin{equation}
\label{App_p_eps_tilde}
\mathcal{P_\e}\widetilde{p}_\e-\widetilde{\lambda}_\e\widetilde{p}_\e=-\e^2\left(|\p_x \phi|^2+\p_{xx}\phi\right)\widetilde{p}_\e,
\end{equation}
for $\mathcal{P_\e}$ the following parabolic operator
$$
\mathcal{P_\e}p=\p_tp-c\e\p_xp-\e^2\p_{xx}p-a(\SM{e(t)},x)p.
$$
We  denote
$$
\lambda_{\e^+}=\widetilde{\lambda}_\e+\e^2K, \quad\text{ and }\quad \lambda_{\e^-}=\widetilde{\lambda}_\e-\e^2K,
$$
with
\begin{equation}
\label{Def_K}
K=\Vert \vert\p_x\phi\vert^2+\p_{xx}\phi\Vert_{L^\infty},
\end{equation}
where the well definition of $K$ is guaranteed by the next lemma which is proved in the next subsection. 
\begin{lemma}
\label{Borne_operat}
The constant $K$ given in \eqref{Def_K} is well defined. Moreover the function $\phi$ computed above solves \eqref{System_approx} with 
{$\lambda_2=\sqrt{-\bar a_{xx}(x_m)/2}$}.
\end{lemma}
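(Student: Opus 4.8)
The plan is to extract explicit information about $\phi$ from the system \eqref{System_approx} and then read off both the boundedness of $K$ and the value of $\lambda_2$. First I would use the first equation of \eqref{System_approx}, which (using \eqref{LimitEq}) reduces to $\p_t\phi=a(e(t),x)-\bar a(x)$, to write $\phi(t,x)=\phi(0,x)+\int_0^t a(\tau,x)\,d\tau-t\,\bar a(x)$. Since $a\in L^\infty(E,C^3(\R))$ by \eqref{a_W3inf}, the $t$-dependent part of $\phi$ is $C^3$ in $x$ with derivatives bounded uniformly on all of $\R$; hence $\p_x\phi$ and $\p_{xx}\phi$ will be controlled as soon as we control $\p_x\phi(0,\cdot)$ and $\p_{xx}\phi(0,\cdot)$, which come from the second equation of \eqref{System_approx}.

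Next I would analyze the second equation $-\psi_{xx}=(2\psi_x+c)\,\frac1T\int_0^T\p_x\phi(t,x)\,dt+\lambda_2$. Write $g(x):=\frac1T\int_0^T\p_x\phi(t,x)\,dt$. Away from $x_m$ we have $2\psi_x+c\neq0$ (recall $\p_x\psi+\frac c2=\mp\sqrt{\bar a(x_m)-\bar a(x)}$ from the explicit formula \eqref{Exp_Sol_psi}, which vanishes only at $x_m$), so $g(x)=\dfrac{-\psi_{xx}(x)-\lambda_2}{2\psi_x(x)+c}$ for $x\neq x_m$. The only possible obstruction to boundedness of $g$, and through it of $\p_x\phi$ and therefore of $K$, is the behavior as $x\to x_m$, where the denominator vanishes; this forces $-\psi_{xx}(x_m)-\lambda_2=0$, i.e.
$$
\lambda_2=-\psi_{xx}(x_m).
$$
Differentiating $\bigl(\p_x\psi+\tfrac c2\bigr)^2=\bar a(x_m)-\bar a(x)$ twice and evaluating at $x_m$ (using $\bar a_x(x_m)=0$ and the fact that near $x_m$, $\p_x\psi+\frac c2$ vanishes to first order with $(\p_x\psi+\frac c2)'(x_m)^2=-\tfrac12\bar a_{xx}(x_m)$, so $-\psi_{xx}(x_m)=(\p_x\psi+\frac c2)'(x_m)=\sqrt{-\bar a_{xx}(x_m)/2}$ with the sign fixed by $\psi$ having a maximum-type profile after the Liouville shift) gives $\lambda_2=\sqrt{-\bar a_{xx}(x_m)/2}$, as claimed. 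With this choice of $\lambda_2$, l'Hôpital (or a Taylor expansion, legitimate since $\bar a\in C^3$) shows $g$ extends continuously across $x_m$; away from a neighborhood of $x_m$, assumption \eqref{a_lambda_neg} together with \eqref{HJ_w} gives $\bar a(x)+\lambda_1-\frac{c^2}4\le-\delta-\frac{c^2}4<0$ for $|x|\ge R_0$, so $|2\psi_x+c|=2\sqrt{\bar a(x_m)-\bar a(x)}$ is bounded below there and $\psi_{xx}$ is bounded (it equals $-\tfrac12\bar a_x(x)/\sqrt{\bar a(x_m)-\bar a(x)}$, again bounded by the $C^3$ regularity of $\bar a$ and the lower bound on the square root), so $g$ is bounded on $\R$. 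Consequently $\p_x\phi(0,\cdot)=g+(\text{smooth bounded correction})$ and $\p_{xx}\phi(0,\cdot)$ are bounded, hence $\p_x\phi,\p_{xx}\phi\in L^\infty(\R_+\times\R)$ and $K$ in \eqref{Def_K} is finite.

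The main obstacle is the removable-singularity analysis at $x=x_m$: one must show that the apparent blow-up of $\frac{1}{2\psi_x+c}$ is exactly cancelled, which both pins down $\lambda_2$ and requires a careful local expansion of $\psi$ near $x_m$ using that $\bar a$ has a nondegenerate maximum there (implicitly $\bar a_{xx}(x_m)<0$, consistent with \eqref{x_m}). A secondary point to check is that $\p_{xx}\phi$ — not just $\p_x\phi$ — stays bounded near $x_m$; this follows by differentiating the relation for $g$ once more and using $\bar a\in C^3$, so that after one l'Hôpital step the resulting quotient is still continuous at $x_m$. Everything else is routine bookkeeping with the explicit formula \eqref{Exp_Sol_psi} and the uniform bounds already established.
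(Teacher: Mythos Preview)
Your proposal is correct and follows essentially the same route as the paper: reduce the first equation of \eqref{System_approx} to $\p_t\phi=a(e(t),x)-\bar a(x)$, use the second equation to express the time-average of $\p_x\phi$ as a quotient with denominator $2\psi_x+c$, read off $\lambda_2=-\psi_{xx}(x_m)=\sqrt{-\bar a_{xx}(x_m)/2}$ from the vanishing of the denominator at $x_m$, and then verify by local expansion that the apparent singularity at $x_m$ is removable for both $\p_x\phi$ and $\p_{xx}\phi$. The paper carries out the last step by explicit Taylor expansions of $f(x)=\bar a(x_m)-\bar a(x)$ to third order (obtaining concrete values $\p_x\phi(0,x_m)=G(x_m)-\bar a_{xxx}(x_m)/(6\bar a_{xx}(x_m))$ and $\p_{xx}\phi(0,x_m)=G'(x_m)+\bar a_{xxx}^2(x_m)/(18\,\bar a_{xx}^2(x_m))$), whereas you invoke l'H\^opital more abstractly; your sketch is adequate, but note that the $\p_{xx}\phi$ step genuinely uses the full $C^3$ regularity of $\bar a$ (the cubic coefficient enters), so ``one l'H\^opital step'' should be understood as a second-order expansion rather than a single differentiation.
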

{We then deduce from \eqref{App_p_eps_tilde} that}
$$
\widetilde{p}_\e\lambda_{\e^-}\leq \p_t \widetilde{p}_\e-c\e \p_x \widetilde{p}_\e -\e^2\p_{xx}\widetilde{p}_\e-a(\SM{e(t)},x)\widetilde{p}_\e\leq\widetilde{p}_\e\lambda_{\e^+}.
$$
{We next} define the functions 
$$
\overline{q}_\e(t,x)=\widetilde{p}_\e(t,x)e^{- t\lambda_{\e^-}},\quad \underline{q}_\e(t,x)=\widetilde{p}_\e(t,x)e^{-t \lambda_{\e^+}}.
$$
One can verify that $\overline{q}_\e$ and $\underline{q}_\e$ are super- and sub-solution of the linear problem \eqref{modelLap_in_m} with $\sigma=\e^2$ and $\widetilde{c}=c\e$, that is
$$
\partial_t \underline{q}_\e-c\e\p_x \underline{q}_\e-\e^2\p_{xx} \underline{q}_\e \leq \underline{q}_\e a(\SM{e(t)},x),
$$
$$
\partial_t \overline{q}_\e-c\e\p_x \overline{q}_\e-\e^2\p_{xx} \overline{q}_\e \geq \overline{q}_\e a(\SM{e(t)},x).
$$
We then apply a Comparison Principle and obtain that the solution $q_\e(t,x)$ to the following linear problem
\begin{equation}
\label{linear_eps}
\left\{
\begin{array}{ll}
\partial_t q_\e-c\e\p_x q_\e-\e^2\p_{xx} q_\e = q_\e a(\SM{e(t)},x),\\
q_\e(0,x)=\widetilde{p}_\e(0,x),
\end{array}
\right.
\end{equation}
satisfies
$$
\underline{q}_\e(t,x)\leq q_\e(t,x)\leq \overline{q}_\e(t,x),\quad\forall(t,x)\in\R_+\times \R.
$$
From the proof of Proposition 1 in Section 2.2 (see equation \eqref{Exp_Conv_m}), applied to the case $\sigma=\e^2$ and $c=c\e$ we know that $q_\e$
converges exponentially fast as $t\to +\infty$ to the periodic eigenfunction in \eqref{Eigenvalue_lambda_ceps},  (see also \cite{huska08}); that is, we can write for some positive constants $\alpha$ and $\beta$, 
\begin{equation}
\label{Conv_linear_eps}
\Vert q_\e e^{t\lambda_{c,\e}}-\alpha p_{c\e}\Vert_{L^\infty}\leq  e^{-\beta t}.
\end{equation}
We recall that $q_\e e^{t\lambda_{c,\e}}$ can indeed be written as 
$$
q_\e e^{t\lambda_{c,\e}}=\widetilde{q}_{\e,1}+\widetilde{q}_{\e,2},
$$
with $\widetilde{q}_{\e,1}(t,\cdot)\in \text{span}\{p_{c\e}(t,\cdot)\}$, $\widetilde{q}_{\e,2}\to 0$ exponentially fast and
$$
\int_\R \widetilde{q}_{\e,2}(t,x)p_{c\e}^*(t,x)dx=0,
$$
where $p_{c\e}^*$ is the principal eigenfunction to the adjoint problem
\begin{equation}
\label{Pb_adjoint}
-\p_t p_{c\e}^*+c\e\p_x p_{c\e}^*-\e^2\p_{xx}p_{c\e}^*=(a(\SM{e(t)},x)+\lambda_{c,\e})p_{c\e}^*,
\end{equation}
(see Theorem 2.2 {in} \cite{huska08} and the proof of Lemma 6  {in} \cite{Mirrahimi&Figueroa_pprint}). The positivity of $\alpha$ is then derived from the fact that $q_\e(0,x)$ and $p_{c\e}^*$ are positive functions. \\ 
On the one hand equation \eqref{Conv_linear_eps} implies that,
$$
0\leq {\widetilde{p}_\e} e^{(-\lambda_{\e^+}+\lambda_{c,\e})t}\leq \alpha p_{c\e}+e^{-\beta t}.
$$
Since $p_{c\e}$ and $\widetilde{p}_\e$ are time-periodic functions, then necessarily 
$$
\lambda_{c,\e}-\lambda_{\e^+}\leq 0,
$$
otherwise we get a contradiction as $t\to+\infty$. Therefore\
\begin{equation}
\label{Left_eigenvalue}
\lambda_{c,\e}-\widetilde{\lambda}_\e\leq K \e^2,
\end{equation}
where $K$ is defined in \eqref{Def_K}.\\
On the other hand, from \eqref{Conv_linear_eps} we obtain
$$
{\widetilde{p}_\e}e^{(-\lambda_{\e^-}+\lambda_{c,\e})t}\geq \alpha p_{c\e}-e^{-\beta t}.
$$
Note that if $\lambda_{c,\e}-\lambda_{\e^-}\leq 0$ we obtain from the $T-$periodicity of the eigenfunctions, as $t\to+\infty$, that  $p_{c\e}\leq 0$, which is also a contradiction. {We deduce} that
$$
\lambda_{c,\e}-\lambda_{\e^-}\geq 0.
$$
Therefore we have
\begin{equation}
\label{Right_eigenvalue}
\lambda_{c,\e}-\widetilde{\lambda}_\e\geq -K\e^2. 
\end{equation}
Combining both inequalities \eqref{Left_eigenvalue} and \eqref{Right_eigenvalue} we write
\begin{equation}
\label{Borne_eigenvalues}
\big \vert \lambda_{c,\e}-({-\overline \rho}+\e\lambda_2)\big \vert\leq K\e^2,
\end{equation}
which leads thanks to {Theorem \ref{Unique_Identif}-(ii) and} Lemma \ref{Borne_operat} to an approximation for the eigenvalue of order $\e^2$ as follows:
$$
\lambda_{c,\e}=-\overline{a}(x_m)+\frac{c^2}{4}+\e \sqrt{-\bar{a}_{xx}(x_m)/2}+o(\e).
$$
The approximation \eqref{Crit_Speed_Approx} for the critical speed $c_\e^*$ can be derived from the above approximation and \eqref{Critical_c}. Indeed, from \eqref{Critical_c} and the definition of $c_\e^*$ we obtain
$$
c_\e^*=2\sqrt{\bar a(x_m)-\e\sqrt{-\frac{\bar a_{xx}(x_m)}{2}}+o(\e)}=2\sqrt{\bar a(x_m)}-\e {\sqrt{-\frac{\bar a_{xx}(x_m)}{2\,\bar a(x_m)}}}+o(\e).
$$
\subsection{Boundedness of $K$.} In this subsection we prove Lemma \ref{Borne_operat}. We provide the proof in several steps.\\
\begin{proof}(\textbf{Proof of Lemma \ref{Borne_operat}})
\subparagraph{{Step 1:} $\vert \p_x \phi\vert$ is bounded.}
An integration in $[0,T]$ of the first equation in \eqref{System_approx} gives us the
already known equation for $\psi$ in {\eqref{HJ_psi}. This allows us to rewrite the equation} as follows:
\begin{equation}
\label{Deriv_x_phi}
\p_t\phi=a(\SM{e(t)},x)-\overline{a}(x)\Rightarrow \p_x \phi(t,x)=\p_x \phi(0,x)+\int_0^t a_x(\SM{e(\tau)},x)d\tau-t\overline{a}_x(x),
\end{equation}
where $a_x$ and $\overline{a}_x$ denote the derivatives with respect to $x$ of $a(\SM{e(t)},x)$ and $\overline{a}(x)$   respectively. This implies that in order to bound $\p_x\phi$ we just need to bound the derivative of $\phi$ at point $t=0$ since $a(\SM{e(t)},x)\in L^\infty(\R_+,C^3(\R))$.\\

Then from the second equation in \eqref{System_approx} we obtain:
\begin{equation}
\label{Deriv_phi_def}
\frac{1}{T}\int_0^T\p_x\phi(t,x)dt=\dfrac{-\psi_{xx}(x)+\psi_{xx}(x_m)}{2\psi_x(x)+c},
\end{equation}
if the last formula is well defined, i.e., if all the derivatives exist.\\
Note that, an integration for $t\in[0,T]$ in the equation \eqref{Deriv_x_phi} leads to, (after dividing by $T$)
$$
\frac{1}{T}\int_0^T\p_x \phi(t,x)dt=\p_x \phi(0,x)+\frac{1}{T}\int_0^T\int_0^t a_x(\SM{e(\tau)},x)d\tau dt-\frac{T}{2}\overline{a}_x(x),
$$
since $\p_x \phi(0,x)$ does not depend on $t$. We then deduce from the last formula and \eqref{Deriv_phi_def}
\begin{equation}
\label{Deriv_phi_0}
\p_x\phi(0,x)=\dfrac{-\psi_{xx}(x)+\psi_{xx}(x_m)}{2\psi_x(x)+c}+G(x),
\end{equation}
where 
\begin{equation}
\label{G(x)}
G(x)=-\frac{1}{T}\int_0^T\int_0^t a_x(\SM{e(\tau)},x)d\tau dt+\frac{T}{2}\overline{a}_x(x),
\end{equation}
is a regular function. We next prove that the derivatives involved in \eqref{Deriv_phi_def} exist. To this end we claim the following technical result.
\begin{lemma}
\label{Diff_psi}
The function $\psi(x)$ is twice differentiable for every $x\in\R$ and 
\begin{equation}
\label{Deriv2_psi}
\psi_{xx}(x)=\left\{
\begin{array}{lr}
-\dfrac{\overline{a}_x(x)}{2\sqrt{\overline{a}(x_m)-\overline{a}(x)}},& x<x_m,\\\\
-\sqrt{-\overline{a}_{xx}(x_m)/2},& x=x_m,\\\\
\dfrac{\overline{a}_x(x)}{2\sqrt{\overline{a}(x_m)-\overline{a}(x)}},& x>x_m.
\end{array}
\right.
\end{equation}
\end{lemma}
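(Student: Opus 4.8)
The plan is to work directly from the explicit formula \eqref{Exp_Sol_psi} for $\psi$ obtained in Theorem \ref{Unique_Identif}(ii), which I rewrite as
$$
\psi(x)=\frac{c}{2}(\overline{x}-x)+\int_{\overline{x}}^{x_m}\sqrt{\overline{a}(x_m)-\overline{a}(y)}\,dy-\big|F(x)\big|,\qquad F(x):=\int_{x_m}^{x}\sqrt{\overline{a}(x_m)-\overline{a}(y)}\,dy ,
$$
and to treat separately the easy region $x\neq x_m$ and the single point $x=x_m$, where the integrand of $F$ vanishes and the naive chain-rule formula for $\psi_{xx}$ is of the indeterminate form $0/0$. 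First I would record the structural facts: since $a\in L^\infty(E,C^3(\R))$, differentiation under the integral gives $\overline{a}\in C^3(\R)$; assumption \eqref{x_m} gives $\overline{a}(x)<\overline{a}(x_m)$ for every $x\neq x_m$, so the radicand is strictly positive off $x_m$; and $x_m$ being the maximizer forces $\overline{a}_x(x_m)=0$ and $\overline{a}_{xx}(x_m)\le 0$. Since $F'(y)=\sqrt{\overline{a}(x_m)-\overline{a}(y)}\ge 0$, vanishing only at $y=x_m$, the function $F$ is nondecreasing with $F(x_m)=0$, hence $F<0$ on $(-\infty,x_m)$ and $F>0$ on $(x_m,+\infty)$, so $|F|=-F$ on $(-\infty,x_m]$ and $|F|=F$ on $[x_m,+\infty)$.

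For $x\neq x_m$ this is then a routine computation. Differentiating \eqref{Exp_Sol_psi} on each side gives $\psi_x(x)=-\tfrac c2+\sqrt{\overline{a}(x_m)-\overline{a}(x)}$ for $x<x_m$ and $\psi_x(x)=-\tfrac c2-\sqrt{\overline{a}(x_m)-\overline{a}(x)}$ for $x>x_m$; differentiating once more, using that $\overline{a}$ is $C^3$ and the radicand is positive, produces exactly the two outer branches of \eqref{Deriv2_psi}. This settles the statement on $\R\setminus\{x_m\}$, and one also reads off that $\psi_x$ extends continuously to $x_m$ with value $-\tfrac c2$.

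The crux is the behaviour at $x=x_m$, and the hard part is extracting the quadratic factor from the radicand. Using Taylor's formula with integral remainder together with $\overline{a}_x(x_m)=0$,
$$
\overline{a}(x_m)-\overline{a}(x)=(x-x_m)^2\,G(x),\qquad G(x):=-\int_0^1(1-s)\,\overline{a}_{xx}\big(x_m+s(x-x_m)\big)\,ds,
$$
where $G$ is continuous, $G(x)=\big(\overline{a}(x_m)-\overline{a}(x)\big)/(x-x_m)^2>0$ for $x\neq x_m$ and $G(x_m)=-\tfrac12\overline{a}_{xx}(x_m)\ge 0$, so $\sqrt{G}$ is continuous near $x_m$. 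Since $\sqrt{\overline{a}(x_m)-\overline{a}(x)}=|x-x_m|\sqrt{G(x)}$, the two one-sided expressions for $\psi_x$ found above collapse, near $x_m$, into the single identity $\psi_x(x)=-\tfrac c2-(x-x_m)\sqrt{G(x)}$; moreover $|F(x)|=O(|x-x_m|^2)$ gives at once that $\psi$ is differentiable at $x_m$ with $\psi_x(x_m)=-\tfrac c2$, so this identity holds on a full neighbourhood of $x_m$. Therefore
$$
\frac{\psi_x(x)-\psi_x(x_m)}{x-x_m}=-\sqrt{G(x)}\ \longrightarrow\ -\sqrt{G(x_m)}=-\sqrt{-\overline{a}_{xx}(x_m)/2}\qquad\text{as }x\to x_m ,
$$
which shows $\psi$ is twice differentiable at $x_m$ with the stated value. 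Combining the three cases yields \eqref{Deriv2_psi}. I expect the only delicate point to be this $0/0$ cancellation at $x_m$; it is resolved entirely by the nonnegativity of $G$ (coming from $x_m$ being the maximizer) and the $C^3$ regularity of $\overline{a}$, and in particular no further nondegeneracy of $\overline{a}_{xx}(x_m)$ is required for this lemma.
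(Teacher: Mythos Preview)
Your proposal is correct and follows the same overall strategy as the paper: differentiate the explicit formula \eqref{Exp_Sol_psi} directly for $x\neq x_m$, and handle the indeterminate point $x=x_m$ by a Taylor expansion of $\overline{a}(x_m)-\overline{a}(x)$ that extracts the quadratic factor $(x-x_m)^2$.

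The only difference is in the execution at $x_m$. The paper writes the finite Taylor polynomial $f(x)=-\tfrac12\overline a_{xx}(x_m)(x-x_m)^2-\tfrac16\overline a_{xxx}(x_m)(x-x_m)^3+o((x-x_m)^3)$ and computes the two one-sided limits of $(\psi_x(x)-\psi_x(x_m))/(x-x_m)$ separately. Your use of the integral-remainder form $\overline a(x_m)-\overline a(x)=(x-x_m)^2G(x)$ is a bit cleaner: it packages the two branches of $\psi_x$ into the single formula $\psi_x(x)=-\tfrac c2-(x-x_m)\sqrt{G(x)}$ valid on both sides, so the limit is read off immediately without treating $x\to x_m^+$ and $x\to x_m^-$ by hand. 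Either way works; your version avoids the bookkeeping of matching the signs of $|x-x_m|$ on the two sides.
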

\begin{proof}(\textbf{Proof of Lemma \ref{Diff_psi}})\\
Indeed, from the explicit formula \eqref{Exp_Sol_psi} we differentiate and obtain:
\begin{equation}
\label{Deriv_psi}
\psi_x(x)=\left\{
\begin{array}{lr}
-\frac{c}{2}+\sqrt{\overline{a}(x_m)-\overline{a}(x)},& x<x_m,\\
-\frac{c}{2}, & x=x_m,\\
-\frac{c}{2}-\sqrt{\overline{a}(x_m)-\overline{a}(x)},& x>x_m.
\end{array}
\right.
\end{equation}
We next compute 
$$
\lim_{x\to x_m^+}\frac{\psi_x(x)-\psi_x(x_m)}{x-x_m}=\lim_{x\to x_m^+}\frac{-\sqrt{\bar{a}(x_m)-\bar a(x)}}{x-x_m}=\lim_{x\to x_m^+}\frac{-\sqrt{f(x)}}{x-x_m},
$$
where we have denoted $f(x)=\overline{a}(x_m)-\overline{a}(x)$.  We write a Taylor expansion of $f$ around $x=x_m$, i.e.:
$$
f(x)=-\frac{1}{2}\bar a_{xx}(x_m)(x-x_m)^2-\frac{1}{6}\bar a_{xxx}(x_m)(x-x_m)^3+o((x-x_m)^3),
$$
since $f(x_m)=0$ and $x_m$ is a maximum point. It implies that:
$$
\lim_{x\to x_m^+}\frac{\psi_x(x)-\psi_x(x_m)}{x-x_m}=\lim_{x\to x_m^+}\dfrac{\bar a_x(x)}{2\sqrt{\bar a(x_m)-\bar a(x)}}=-\sqrt{-\overline{a}_{xx}(x_m)/2}.
$$
Note that $x_m$ being a maximum point, $\overline{a}(x_m)\geq\overline{a}(x)$, $\forall x\in\R$ and $\overline{a}_{xx}(x_m)\leq 0$. Following similar arguments one can prove that 
$$
\lim_{x\to x_m^-}\frac{\psi_x(x)-\psi_x(x_m)}{x-x_m}=\lim_{x\to x_m^-}\dfrac{-\bar a_x(x)}{2\sqrt{\bar a(x_m)-\bar a(x)}}=-\sqrt{-\overline{a}_{xx}(x_m)/2}.
$$

\end{proof}
We pursue with the proof of Lemma \ref{Borne_operat}.\\
By substituting the derivatives of $\psi$ in \eqref{Deriv_phi_def} we obtain for every $x\neq x_m$:
{
\begin{equation}
\label{Deriv_phi}
\p_x\phi(0,x)=G(x)+\left\{
\begin{array}{lr}
\dfrac{\overline{a}_x(x)-\sqrt{-2\overline{a}_{xx}(x_m)(\overline{a}(x_m)-\overline{a}(x))}}{4(\overline{a}(x_m)-\overline{a}(x))},& x<x_m,\\\\
\dfrac{\overline{a}_x(x)+\sqrt{-2\overline{a}_{xx}(x_m)(\overline{a}(x_m)-\overline{a}(x))}}{4(\overline{a}(x_m)-\overline{a}(x))},& x>x_m.
\end{array}
\right.
\end{equation}
\\\\
We can bound $\p_x\phi(0,x)$ near to $x=x_m$. We write the limits as $x\to x_m$ in \eqref{Deriv_phi} in terms of $f$ and its derivatives $A=-\overline{a}_{xx}(x_m)/2$ and $B=-\overline{a}_{xxx}(x_m)/6$, and compute:
\begin{equation}
\label{Limit_d_phi}
\dis\lim_{x\to x_m^{\mp}}\p_x\phi(0,x)=\lim_{x\to x_m^{\mp}} G(x)+\lim_{x\to x_m^{\mp}} \frac{- f'(x)\mp 2\sqrt{A}\sqrt{f(x)}}{4 f(x)},
\end{equation}
if both limits exist. Note that from the definition of $G$ in \eqref{G(x)} {we deduce that the first limit in \eqref{Limit_d_phi} exists and is equal to $G(x_m)$}. We, then, only need to compute the second one to guarantee the existence of $\p_x\phi(0,x_m)$. We compute both lateral limits separately:
$$
\begin{array}{l}
\lim_{x\to x_m^{-}} \dfrac{- f'(x)-2\sqrt{A}\sqrt{f(x)}}{4 f(x)}\\\\
=\lim_{x\to x_m^{-}}\frac{-[2A(x-x_m) +3B( x-x_m)^2+o((x-x_m)^2)]- 2A\vert x-x_m\vert\sqrt{1+\frac{B}{A}( x-x_m)+o(( x-x_m) )}}{4\left(A(x-x_m)^2+B(x-x_m)^3+o((x-x_m)^3\right))}\\\\
=\lim_{x\to x_m^{-}}\frac{-2A(x-x_m) -3B( x-x_m)^2+o((x-x_m)^2)+ 2A(x-x_m)(1+\frac{B}{2A}( x-x_m)+o(( x-x_m) )}{4\left(A(x-x_m)^2+B(x-x_m)^3+o((x-x_m)^3\right))}\\\\
=-\dfrac{B}{2A}.
\end{array}
$$
Following similar arguments one can prove that 
$$
\dis\lim_{x\to x_m^{+}} \dfrac{- f'(x)-2\sqrt{A}\sqrt{f(x)}}{4 f(x)}=-\dfrac{B}{2A}.
$$
From this last computation and formula \eqref{Deriv_phi} we deduce that $\p_x\phi(t,x)$ is bounded for every $(t,x)\in\R_+\times\R$ and
$$
\p_x\phi(0,x_m)=G(x_m)-\frac{\overline{a}_{xxx}(x_m)}{6\overline{a}_{xx}(x_m)}.
$$  
}
\subparagraph{{Step 2:} $\vert \p_{xx} \phi\vert$ is bounded.} {Again in order to bound $\p_{xx} \phi(t,x)$ we can bound $\p_{xx} \phi(0,x)$ according to formula \eqref{Deriv_x_phi}. Note that far from $x_m$ this derivative exists and it is bounded because of the regularity of $a(\SM{e(t)},x)$. To verify the boundedness near of $x_m$ we follow the same arguments as above for the first derivative, that is, we denote $f(x)=\bar a(x_m)-\bar a(x)$ as before and we compute 
\begin{equation}
\begin{array}{rcl}
\label{limit_d2_phi}
\dis\lim_{x\to x_m^{\mp}}\frac{\p_x\phi(0,x)-\p_x\phi(0,x_m)}{x-x_m}&=&\dis\lim_{x\to x_m^{\mp}}\frac{G(x)-G(x_m)}{x-x_m} \\
&+& \dis\lim_{x\to x_m^{\mp}} \frac{- f'(x)\mp 2\sqrt{A}\sqrt{f(x)}+\frac{2B}{A}f(x)}{4 f(x)(x-x_m)},
\end{array}
\end{equation}
if both limits in the RHS exist and are bounded. {Note that the first limit in the RHS of \eqref{limit_d2_phi} exist and is equal to $G'(x_m)$ because of the definition of $G$ in \eqref{G(x)} and the regularity of $a(\SM{e(t)},x)$.} 
Moreover, using the Taylor expansion for $f(x)$ around $x=x_m$ the  terms in the numerator of \eqref{limit_d2_phi} can be developed as follows
$$\begin{array}{rcl}
f'(x)&=&2A(x-x_m) +3B( x-x_m)^2+O((x-x_m)^3),\\\\
2\sqrt{A}\sqrt{f(x)}&=& 2A\vert x-x_m\vert\sqrt{1+\frac{B}{A}(x-x_m)+O((x-x_m))^2}\\
&=&2A\vert x-x_m\vert\big(1+\frac{B}{2A}(x-x_m)+O((x-x_m)^2)\big),\\\\
\frac{2B}{A}f(x)&=&2B(x-x_m)^2+O((x-x_m)^3).
\end{array}
$$
We substitute into \eqref{limit_d2_phi} and it holds that the terms remaining in the numerator are of order $(x-x_m)^3$.  Indeed,
$$
\begin{array}{l}
\dis\lim_{x\to x_m^-} \frac{ - f'(x)- 2\sqrt{A}\sqrt{f(x)}+\frac{2B}{A}f(x)}{4 f(x)(x-x_m)}\\\\
=\dis\lim_{x\to x_m^-} \left[\frac{-2A(x-x_m) -3B( x-x_m)^2+2A(x-x_m)(1+\frac{B}{2A}(x-x_m))}{4A(x-x_m)^3+4B(x-x_m)^4+O((x-x_m)^4)}\right.\\\\
\qquad \dis\left.+\frac{2B(x-x_m)^2+\frac{2B^2}{A}(x-x_m)^3+O((x-x_m)^3)}{4A(x-x_m)^3+4B(x-x_m)^4+O((x-x_m)^4)}\right] \\\\
=\dfrac{B^2}{2A^2},
\end{array}
$$ 
and by an analogous procedure we can obtain
$$
\dis\lim_{x\to x_m^+} \frac{ - f'(x)+ 2\sqrt{A}\sqrt{f(x)}+\frac{2B}{A}f(x)}{4 f(x)(x-x_m)}=\dfrac{B^2}{2A^2}.
$$ 
We then conclude that the second derivative of $\phi$ at point $(0,x_m)$ is bounded and
$$
\p_{xx}\phi(0,x_m)=G'(x_m)+\frac{\overline{a}_{xxx}^2(x_m)}{18\ \overline{a}_{xx}^2(x_m)}.
$$
 }

\subparagraph{{Step 3:} $\lambda_2=\sqrt{-\bar a_{xx}(x_m)/2}$.}
We next evaluate the second equation in \eqref{System_approx} at $x=x_m$ to obtain $\lambda_2=\sqrt{-\bar a_{xx}(x_m)/2}$.

\end{proof}

\section{An illustrating biological example}
\label{Bio_example}

In this section we discuss the effect of the periodic fluctuations on the critical speed of survival and the {phenotypic distribution of the population} for the following particular growth rate

\begin{equation}
\label{a_no_clim}
\SM{a(\SM{e},x)=r-g(e)(x-\theta(e))^2,}
\end{equation}
\SM{where $r$ is a positive constant corresponding to the maximal growth rate.   The positive function $g$  
  represents {the pressure of   selection} and the function $\theta$ represents the optimal trait, both being functions of the   environmental state $e$. As above, we assume that  $e(t):\R_+\to E$ is a  periodic function with period $T=1$.}\\

\noindent
We compute the mean of $a(\SM{e(t)},x)$ 
$$
\overline{a}(x)=\int_0^1a(\SM{e(t)},x)dt=r-x^2\bar g+2xg_1-g_2,
$$
where
\begin{equation}
\label{g}
\bar{g}=\int_0^1g(\SM{e(t)})dt,\quad g_1=\int_0^1g(\SM{e(t)})\theta(\SM{e(t)})dt,\quad g_2=\int_0^1g(\SM{e(t)})\theta^2(\SM{e(t)})dt,
\end{equation}
and we observe that the maximum of $\overline{a}(x)$ is attained at $x_m=\frac{g_1}{\bar{g}}$, with
$$
\bar a(x_m)=r+\frac{g_1^2}{\bar{g}}-g_2.
$$
\SM{In what follows, we try to characterize the phenotypic density $n_\e$, the solution to \fer{n_eps}.}

\SM{From} {Theorem} \ref{Unique_Identif}-(ii) we obtain that $\psi(x)$ the solution of the Hamilton-Jacobi equation \eqref{LimitEq} attains its maximum at 
$$
\bar x=x_m-\frac{c}{2\sqrt{\bar{g}}}=\frac{g_1}{\bar{g}}-\frac{c}{2\sqrt{\bar{g}}}.
$$
Let $\psi(x)$ be given by \eqref{Exp_Sol_psi}, then for this specific growth rate it can be written as follows
$$
\begin{array}{rcl}
\psi(x)&=&\dis\frac{c}{2}\left(x_m-\frac{c}{2\sqrt{\bar g}}-x\right)+\int_{x_m-\frac{c}{2\sqrt{\bar g}}}^{x_m}\sqrt{\bar{g}(y-x_m)^2}dy-\left|\int_{x_m}^x\sqrt{\bar{g}(y-x_m)^2}dy\right|\\
&=&-\dis\frac{\sqrt{\bar g}}{2}\left(x+\frac{c}{2\sqrt{\bar g}}-\frac{g_1}{\bar{g}}\right)^2\\
&=&-\dis\frac{\sqrt{\bar g}}{2}\left(x-\bar{x}\right)^2.
\end{array}
$$
Moreover, the asymptotic expansions in Theorem \ref{Asymp_Expansion} {imply that}
{$$
\overline{\rho}_\e= r+\frac{g_1^2}{\bar{g}}-g_2-\dfrac{c^2}{4}-\e\sqrt{\bar{g}}+o(\e),\quad 
c_\e^*= 2\sqrt{r+\dfrac{g_1^2}{\bar{g}}-g_2}-{\e\sqrt{\frac{\bar{g}}{r+\dfrac{g_1^2}{\bar{g}}-g_2}}}+o(\e).
$$}
Furthermore, following the arguments in \cite{Mirrahimi&Figueroa_pprint}-Section 5, we can also obtain an approximation of order $\e$ for the phenotypic mean $\mu_\e$ and the variance $\sigma^2_\e$ of the population's distribution, that is:
$$
\begin{array}{rcl}
\mu_\e(t) &=&\dis\frac{1}{\rho_\e(t)}\int_{\R}x\ n_\e(t,x)dx   =\frac{g_1}{\bar{g}}-\frac{c}{2\sqrt{\bar{g}}} +\e D(t)+o(\e), 
\\\\
\sigma^2_\e &=&\dis\frac{1}{\rho_\e(t)}\int_{\R}(x-\mu_\e)^2n_\e(t,x)dx    = \frac{\e}{\sqrt{\bar{g}}}+o(\e), 
\end{array}
$$
where $D(t)=\p_x \phi(\bar x,t)$ for  $\phi$ the solution of the system \eqref{System_approx}. {We refer the readers to the Appendix B for more details on the derivation of the moments}.\\
One can verify that for this growth rate we have
$$
D(t)=-c\sqrt{\bar{g}}\left(t-\frac{1}{2}\right)+2\int_0^1\int_0^\tau g(\SM{e(s)})(\bar{x}-\theta(\SM{e(s)}))dsd\tau-2\int_0^tg(\SM{e(s)})(\bar{x}-\theta(\SM{e(s)}))ds.
$$
Note that the phenotypic mean is $1-$periodic since $D(0)=D(1)$. Moreover $\langle \mu_\e(t)\rangle=\frac{g_1}{\bar{g}}-\frac{c}{2\sqrt{\bar{g}}} {+o(\e)}$ since $\int_0^1D(t)dt=0$.\\\\
We are now interested in comparing these quantities with the case where there is no fluctuations. To do {so} we first consider a case where $\SM{g(e)}=g>0$ is constant and then a case where {$\theta$} is constant.\\

\begin{itemize}
\item[Case 1.] $\SM{g(e)}=g$ constant.  Note that, in such a case $g_1=g\bar\theta$ and $g_2=g\int_0^1\theta^2(\SM{e(t)})dt$ with $\bar \theta=\int_0^1\theta(\SM{e(t)})dt$. We compute 
\begin{equation}
\label{moments-thetap}
\begin{array}{l}
  \overline{\rho}_{\e,g(e)=g}= r+g\left[\bar{\theta}^2-\int_0^1\theta^2(\SM{e(t)})dt\right]-\dfrac{c^2}{4}-\e\sqrt{g}+o(\e),  \\\\
   \big\langle \mu_{\e,g(e)=g}(t)\big\rangle=\bar \theta-\frac{c}{2\sqrt{g}}+o(\e), \\\\
c_{\e,g(e)=g}^*= 2\sqrt{r+g\left[\bar{\theta}^2-\int_0^1\theta^2(e(t))dt\right]}-\e\sqrt{\dfrac{g}{r+g\left[\bar{\theta}^2-\int_0^1\theta^2(\SM{e(t)})dt\right]}}+o(\e).
\end{array}
\end{equation}
We compare then, the sub-cases where \SM{$e$} is constant or periodic.
\begin{itemize}
\item[a)] If $e(t)$ is a $1-$periodic function then, $\bar \theta^2<\int_0^1\theta^2(e(t))dt$  and we obtain 
$$
\begin{array}{c}
    \overline{\rho}_{\e,p}<  r-\dfrac{c^2}{4}-\e\sqrt{g}+o(\e),\quad  \big\langle \mu_{\e,p}(t)\big\rangle=\bar \theta-\dfrac{c}{2\sqrt{g}}+o(\e), \\
 c_{\e,p}^*< 2\sqrt{r}-\e\sqrt{\dfrac g r}+o(\e).
\end{array}
$$
\item[b)] If $\SM{e(t)\equiv   \overline e}$ is constant, typically equal to the averaged state of the periodic environment above, (so that \SM{$\theta\equiv\theta({\overline e})=:\theta_{\overline e}$})   we obtain in particular that $\bar \theta^2=\int_0^1\theta^2(\SM{\overline e})dt$ and hence
$$
\begin{array}{c}
     \overline{\rho}_{\e,c}= r-\dfrac{c^2}{4}-\e\sqrt{g} +o(\e),\quad \big\langle \mu_{\e,c}(t)\big\rangle=\theta_{\overline e}-\dfrac{c}{2\sqrt{g}}+o(\e),\\
     c_{\e,c}^*= 2\sqrt{r}-\e\sqrt{\dfrac g r}+o(\e).
\end{array}
$$
\end{itemize}
Thus, by keeping the pressure of selection constant, we deduce that, {for $\e$ small,}
$$
\overline{\rho}_{\e,p}\leq \overline{\rho}_{\e,c}\quad \text{ and } \quad c_{\e,p}^*\leq c_{\e,c}^*. 
$$
This means that having an oscillating optimal trait is not beneficial for the population, in the sense that the mean total size of the population decreases with respect to the case with a constant optimal trait and the critical speed which leads the population to extinct is smaller in the periodic case. Note also from \eqref{moments-thetap} that in the periodic case the mean population size is reduced by the product of the selection pressure and the variance of the optimal trait (that is $\int_0^1\theta^2(\SM{e(t)})dt-\bar{\theta}^2$).\\

\item[Case 2.] $\theta(\SM{e(t)})=\theta$ constant. Note that, in such a case $g_1=\bar g\theta$ and $g_2=\bar g\theta^2$. We compute 
$$
\begin{array}{c}
\overline{\rho}_{\e,\theta(e(t))=\theta}= r-\dfrac{c^2}{4}-\e\sqrt{\bar g} +o(\e),
\quad \big\langle \mu_{\e,\theta(e(t))=\theta}(t)\big\rangle= \theta-\dfrac{c}{2\sqrt{\bar g}} +o(\e),\\
c_{\e,\theta(e(t))=\theta}^*= 2\sqrt{r}-\e\sqrt{\dfrac{\bar g}{r}} +o(\e).
\end{array}
$$
We compare then, the sub-cases where \SM{$e$} is constant or periodic.
\begin{itemize}
\item[a)] If $\SM{e(t)}$ is a $1-$periodic function then we obtain 
$$
\begin{array}{c}
\overline{\rho}_{\e,p}= r-\dfrac{c^2}{4}-\e\sqrt{\bar g} +o(\e),\quad  \big\langle \mu_{\e,p}(t)\big\rangle= \theta-\dfrac{c}{2\sqrt{\bar g}}+o(\e),  \\
\dis c_{\e,p}^*= 2\sqrt{r}-\e\sqrt{\dfrac{\bar g}{r}}+o(\e).
\end{array}
$$
\item[b)] If $\SM{e(t)\equiv \overline e}$ is constant, typically equal to the averaged state of the periodic environment above,    (\SM{so that $g\equiv g({\overline e})=:g_{\overline e}$}), we obtain 
$$
\begin{array}{c}
\overline{\rho}_{\e,c}= r-\dfrac{c^2}{4}-\e\sqrt{g_{\overline e}} +o(\e),\quad \big\langle \mu_{\e,c}(t)\big\rangle=\theta-\dfrac{c}{2\sqrt{g_{\overline e}}}+o(\e),\\
c_{\e,c}^*= \dis 2\sqrt{r}-\e\sqrt{\f{g_{\overline e}}{r}}+o(\e).
\end{array}
$$
\end{itemize}
If we choose an oscillating selection pressure function $\SM{g(e)}$ which satisfies:
\begin{equation}
\label{Cond_g}
\bar g < \SM{g(\overline e)},
\end{equation}
which holds for instance if $g(\cdot)$ is a concave function, then we obtain that
$$
\overline{\rho}_{\e,c} < \overline{\rho}_{\e,p}\quad \text{ and } \quad c_{\e,c}^* < c_{\e,p}^*. 
$$
This means that the mean total size of the population increases with respect to the case with a constant \SM{environmental state}. Moreover, the critical speed above which the population goes extinct is larger in the periodic case. This means that the periodic fluctuations can help the population to follow the environment change. \\
Note that the condition \eqref{Cond_g} imposed to \SM{$g(e(t))$} is the opposite to the one imposed in  \cite{Mirrahimi&Figueroa_pprint} (equation 51 of Section 6.2), leading to more performant populations. There,  it was proved that in presence of the mutations and while the fluctuations act on the pressure of the selection (that is with a similar growth rate, however with $c=0$ and under the condition $\bar g>g(\SM{\overline e })$), a fluctuating environment can select for a population with smaller variance and in this way lead to more performant populations. What is beneficial in a (in average) constant environment may indeed be disadvantageous in a changing environment. \\
Note also that in the present example under condition \eqref{Cond_g}, we have
$$
\Big\vert\big\langle \mu_{\e,p}(t)\big\rangle-\theta\Big\vert > \Big\vert \big\langle \mu_{\e,c}(t)\big\rangle-\theta\Big\vert.
$$
This means that even if the population can follow the \SM{environmental} change in a better way by considering a fluctuating environment, this population is \SM{less   adapted}.
\end{itemize}

\appendix
\section{The proofs of some regularity estimates}
\subsection{Uniform bounds for $\rho_\e$: the proof of Proposition \ref{bounds_rhoe}.}
\begin{proof} \\
From equation \eqref{n_eps} integrating in $x\in\R$ and using assumption \eqref{a_W3inf} we obtain:
\begin{equation}
\label{bornDerRho}
\frac{d\rho_\varepsilon}{dt}=\int_{\R}n_\varepsilon(t,x)[a(\SM{e(t)},x)-\rho_\varepsilon(t)]dx\leq \rho_\varepsilon(t)[d_0-\rho_\varepsilon(t)].
\end{equation}
This implies that 
$$
\rho_\varepsilon(t)\leq\rho_M:=\max(\rho_\varepsilon^0,d_0).
$$
For the lower bound we use the explicit expression \eqref{Exp_rho_eps}  for $\rho_\e$, the solution of \eqref{Rho_equation}.  We come back to equation \eqref{Q_eps&lambda_eps}, which gives, thanks to \eqref{a_W3inf}, \eqref{lambda_m} and \eqref{Exp_rho_eps}  the following lower bound for $\rho_\e$
$$
0< \rho_m:=\frac{1}{T}e^{-d_0T}\left(e^{\lambda_mT}-1\right)\leq \rho_\varepsilon(t),\quad \forall\;t\geq0.
$$
\end{proof}
\subsection{Upper bound for $\psi_\e$: the proof of the r.h.s of \eqref{Bounds_psi_eps}}
We prove that $\psi_\e$ is bounded from above using the equation for $n_\e$.
From \eqref{bound_p}, we have for $p_{c\e}$:
\begin{equation}
\label{bound_p_eps}
p_{c\e}(t,x)\leq\Vert p_{c\e}\Vert_{L^\infty}e^{-\frac{1}{\e}\left(-\frac{c}{2}+\sqrt{\delta+\frac{c^2}{2}}\right)(|x|-R_0)},\quad\forall(t,x)\in[0,+\infty)\times\R.
\end{equation}
Define $q_{c\e}(t,x)=p_{c\e}(t,x\e)$, which satisfies 
\begin{equation}
\label{q_eps}
\left\{
\begin{array}{cr}
\partial_t q_{c\e}-c\p_x q_{c\e}-\p_{xx} q_{c\e}=a_\e(\SM{e(t)},x)q_{c\e},&\mathrm{in}\;[0,+\infty)\times \R,\\
0<q_{c\e}(t,x)=q_{c\e}(t+ T,x)
\end{array}
\right.
\end{equation}
for $a_\e(\SM{e(t)},x)=a(\SM{e(t)},x\e)+\lambda_{c,\e}$. Note that $a_\e$ is uniformly bounded thanks to the $L^\infty-$norm of $a$. Moreover we have the following bounds for $\lambda_{c,\e}$ coming from \eqref{Q_eps&lambda_eps},
\begin{equation}
\label{Bound_lambda_ce}
-d_0\leq\lambda_{c,\e}\leq-\lambda_m.
\end{equation}
We recall that $p_{c\e}$ is uniquely determined once $\Vert p_{c\e}(0,x)\Vert_{L^\infty(\R)}=1$ is fixed. Then, one can choose $x_\e$ such that $ p_{c\e}(0,x_\e)=1$. Note also that $q_\e$ is a nonnegative solution of \eqref{q_eps} in the bounded domain $(0,2T)\times B(\frac{x_\e}{\e},1)$. \\
Here we apply an elliptic-type Harnack inequality for positive solutions of \eqref{q_eps} in a bounded domain, (see for instance Theorem 2.5 \cite{huska06}). Let $\delta_0$, be such that $0<\delta_0<T$, then we have: 
$$
\sup_{x\in B(\frac{x_\e}{\e},1)}q_{c\e}(t,x)\leq C\inf_{x\in B(\frac{x_\e}{\e},1)}q_{c\e}(t,x),\quad \forall\;t\in[\delta_0,2T],
$$
where $C$ is a positive constant depending on $\delta_0$ and $d_0$. Coming back to $p_{c\e}$ this implies
\begin{equation}
\label{BoundInf_p_App}
p_{c\e}(t_0,x_\e)\leq \sup_{y\in B(x_\e,\e)}p_{c\e}(t_0,y)\leq C p_{c\e}(t_0,x),\quad \forall(t_0,x)\in[\delta_0,2T]\times B(x_\e,\e).
\end{equation}
And we use the $T-$periodicity of $p_{c\e}$ to conclude that the last inequality is satisfied for $t\in[0,T]$.\\
From  \eqref{Rho_bouded}, \eqref{bound_p_eps} and \eqref{BoundInf_p_App} we obtain 
$$
n_\e(0,x)\leq \rho_M \dfrac{p_{c\e}(0,x)}{\int_{\R}p_{c\e}(0,x)dx}\leq \rho_M\frac{Cp_{c\e}(0,x)}{\int_{B(x_\e,\e)}p_{c\e}(0,x_\e)dx}= \rho_M \dfrac{Cp_{c\e}(0,x)}{|B(x_\e,\e)|}\leq C'\e^{-1}e^{\frac{c_1-c_2|x|}{\e}},
$$
for all $\varepsilon\leq\varepsilon_0$, with $\varepsilon_0$ small enough, where the constant $c_1$ depends on $\rho_M$, $\delta$, $R_0$ and $c$, and $c_2=-\frac{c}{2}+\sqrt{\delta+\frac{c^2}{2}}$. Next we proceed with a maximum principle argument to obtain for every $(t,x)\in[0,+\infty)\times\R$ and $c_3=c_2(c+c_2)+d_0$, 
$$
n_\varepsilon(t,x)\leq C'e^{\frac{c_1-c_2|x|}{\varepsilon}+c_3 t}.
$$
From the latter inequality and the periodicity of $\psi_\e$, with an abuse of notation for constant $c_1$, we deduce that:
\begin{equation}
\label{Up_Bound_psi_e}
\psi_\e(t,x)\leq c_1-c_2|x|,\quad \forall(t,x)\in[0,+\infty)\times\R.
\end{equation}
\subsection{Equicontinuity in time for $\psi_\e$.}
We will use the arguments in \cite{Mirrahimi&Figueroa_pprint}, which follow a method introduced in \cite{BarlesEqui}, in order to deduce uniform equicontinuity in time for the family $\psi_\e$ on compact subsets of $(0,+\infty)\times\R$.\\
The goal will be to find for any $\eta>0$, constants $\Lambda_1$, $\Lambda_2$ large enough such that: for any $x \in B(0, R/2)$, $s\in[0,T]$, and for all $\varepsilon<\varepsilon_0$ we have
\begin{equation}
\label{supersolution}
\psi_\e(t,y)-\psi_\e(s,x)\leq \eta + \Lambda_1|x-y|^2+\varepsilon \Lambda_2(t-s), \forall (t,y)\in[s,T]\times B_R(0),
\end{equation}
and
\begin{equation}
\label{subsolution}
\psi_\e(t,y)-\psi_\e(s,x)\geq -\eta- \Lambda_1|x-y|^2-\varepsilon \Lambda_2(t-s), \forall (t,y)\in[s,T]\times B_R(0).
\end{equation}
Because of the analogy between the both inequalities above we only prove \eqref{supersolution}. 
\\We fix $(s, x)$ in $[0, T[\times B_{R/2}(0)$ and define
$$
\widehat{\xi}(t,y)=\psi_\e(s,x) +\eta+ \Lambda_1|x-y|^2+\varepsilon \Lambda_2(t-s), \quad (t,y)\in[s,T[\times B_R(0),
$$
with $\Lambda_1$ and $\Lambda_2$ positive constants to be determined. We prove that, for $\Lambda_1$ and $\Lambda_2$ large enough, $\widehat{\xi}$ is a super-solution of the equation \eqref{Eq_psi_eps} on $[s,T]\times B_R(0)$ and $\widehat{\xi}(t,y)>\psi_\e(t,y)$ for $(t,y)\in \{s\}\times B_R(0)\cup [s,T]\times \partial B_R(0)$.\\\\
According to Section 3.2.1, $\{\psi_\e\}_\e$ is locally uniformly bounded, so we can find a constant $\Lambda_1$ such that for all $\e < \e_0$,
$$
\frac{8\Vert \psi_\e\Vert_{L^\infty([0,T]\times B_R(0))}}{R^2}\leq \Lambda_1. 
$$
With this choice, $\widehat{\xi}(t, y) > \psi_\e(t, y)$ on $[s, T ]\times \partial B_R(0)$, for all $\eta>0$, $\Lambda_2>0$ and $x\in B_{R/2}(0)$. \\
Next we prove that, for $\Lambda_1$ large enough, $\widehat{\xi}(s, y) > \psi_\e(s, y)$ for all $y\in B_R(0)$. We
argue by contradiction. Assume that there exists $\eta > 0$ such that for every constant $\Lambda_1$ there exists $y_{\Lambda_1,\varepsilon}\in B_R(0)$ such that
\begin{equation}
\label{Absurd}
\psi_\e(s,y_{\Lambda_1,\varepsilon)}-\psi_\e(s,x)>\eta+\Lambda_1|y_{\Lambda_1,\varepsilon}-x|^2.
\end{equation}
This implies
$$
|y_{\Lambda_1,\varepsilon}-x|\leq\sqrt{\frac{2\Psi_M}{\Lambda_1}}\longrightarrow0,\quad\mathrm{as}\; {\Lambda_1\rightarrow\infty},
$$
where we have denoted $\Psi_M$  a uniform upper bound for $\Vert \psi_\e\Vert_{L^\infty([0,T]\times B_R(0))}$. Then for all $\delta_1>0$, there exist $\Lambda_1$ large enough and $\varepsilon_0$ small enough, such that $\forall\varepsilon<\varepsilon_0$,
$$
|y_{\Lambda_1,\varepsilon}-x|\leq \delta_1.
$$
Therefore, from the uniform continuity in space of $\psi_\e$ taking  $\delta_1$ small enough, we obtain
$$
|\psi_\e(s,y_{\Lambda_1,\varepsilon})-\psi_\e(s,x)|<\eta/2\quad \forall\varepsilon\leq \varepsilon_0,
$$
but this is a contradiction with \eqref{Absurd}. Therefore $\widehat{\xi}(s, y) > \psi_\e(s, y)$ for all $y\in B_R(0)$. \\
Finally, noting that $R<+\infty$ we deduce that for $\Lambda_2$ large enough, $\widehat{\xi}$ is a super-solution to \eqref{Eq_psi_eps} in $[s, T ] \times B_R(0)$. \\
Using a comparison principle, we have
$$
\psi_\e(t,y)\leq \widehat{\xi}(t,y)\quad \forall(t,y)\in[s,T]\times B_R(0).
$$
Thus \eqref{supersolution} is satisfied for $t\geq s\geq 0$. To conclude we put $x = y$ and obtain that for all $\eta>0$ there exists $\varepsilon_0>0$ such that for all $\e<\e_0$
$$
|\psi_\e(t,x)-\psi_\e(s,x)|\leq \eta + \varepsilon \Lambda_2(t-s),
$$
for every $(t,x)\in[0,T]\times B_R(0)$.
This implies that $\psi_\e$ is locally equicontinuous in time. Moreover we obtain that
\begin{equation}
\label{Equi_psi}
\forall R>0,\ \sup_{t\in[0,T],\ x\in B_R}|\psi_\e(t,x)-\psi_\e(s,x)|\rightarrow0\quad\mathrm{as }\:\e\rightarrow 0.
\end{equation}

\section{Computations of the moments of the population's distribution}
\label{Moments}

In this section we estimate the moments of the population's distribution with a small error, {following \cite{Mirrahimi2017, Mirrahimi&Figueroa_pprint}}. To this end, we use the arguments in Section \ref{Eigen_Approx}.\\
Using \eqref{Exp_Sol_psi}, one can compute a Taylor expansion of order 4 around the point of maximum $\overline{x}$.
\begin{equation}
\label{ExpTay_psi}
\psi(x)=-\frac{A}{2}(x-\bar x)^2+B(x-\bar x)^3+C(x-\bar x)^4+o(x-\bar x)^4.
\end{equation}
Note also that using the formal expansions in \eqref{Formal_exp_phi_lambda} one can obtain $\phi$ from an equivalent system to \eqref{System_approx}, that is
\begin{equation}
\label{System_approx_App}
\left\{
\begin{array}{l}
\dis\p_t \phi=\left|\psi_x +\frac{c}{2}\right|^2+a(\SM{e(t)},x)-\frac{c^2}{4}-\bar{\rho},\\\\
\dis-\psi_{xx}=[2\psi_x +c]{\frac{1}{T}\int_0^T\p_x \phi(t,x)dt}-\psi_{xx}(x_m).
\end{array}
\right.
\end{equation}
and write (formally)
$$
\phi(t,x)=\phi(t,\bar x)+D(t)(x-\bar x)+E(t)(x-\bar x)^2+o(x-\bar x)^2,\quad \omega(t,x)=F(t)+o(x-\bar x).
$$
The above approximations of $\psi$, $\phi$ and $\omega$ around the maximum point of $\psi$ allow us to estimate the moments
of the population's distribution with an error of order $\e^2$ as $\e\rightarrow 0$. Indeed, we use the following  approximation for the phenotypic density of the population
$$
{n_\e(t,x)=\frac{1}{\sqrt{2\pi\e}}e^{\frac{\psi(x)}{\e}+\phi(t,x)+O(\e)},}
$$ 
and replacing by the approximations of $\psi$, $\phi$ and $\omega$ given above, we can obtain
$$
\begin{array}{lcl}
\dis\int_{\R}(x-\bar x)^kn_\e(t,x)dx&=&\dfrac{e^{\phi(t,\bar x)}\e^{\frac{k}{2}}}{\sqrt{2\pi}}\dis\int_{\R}\left[y^k e^{\frac{-Ay^2}{2}}\big[1+\sqrt{\e}\left(By^3+D(t)y\right)\right.\\\\ && \left.\dis\qquad +\e\left(Cy^4+E(t)y^2+F(t)+\frac{1}{2}(By^3+D(t)y)^2\right)+o(\e)\big]dy\right].
\end{array}
$$
Note that, {we performed above} a change of variable $x-\bar x=\sqrt{\e}\ y$. Therefore each term $x-\bar x$ can be considered as of order $\sqrt{\e}$ in the integration. The above computation leads in particular to the following approximations of the {mean phenotypic trait and the variance:}
$$
\begin{array}{l}
\mu_\e(t)=\dis\frac{1}{\rho_\e(t)}\int_{\R}x\ n_\e(t,x)dx= \bar x+\e\left(\frac{3B}{A^2}+\frac{D(t)}{A}\right)+O(\e^2),\\
\sigma^2_\e(t)=\dis\frac{1}{\rho_\e(t)}\int_{\R}(x-\mu_\e)^2n_\e(t,x)dx=\frac{\varepsilon}{A}+O(\e^2).
\end{array}
$$
\subsection{Application to the biological example.}
We now apply the previous computations to the particular growth rate  $a$ given in \eqref{a_no_clim}.
{Thanks to \eqref{LimitEq}, we obtain }
$$
\overline{\rho}=\overline{a}(\bar x)=r-\dfrac{g_1^2}{\bar{g}}-\dfrac{c^2}{4}-g_2,
$$
for $\bar g$, $g_1$ and $g_2$ given in \eqref{g}. Next {from the second equation of \eqref{System_approx_App} and the fact that $\psi_{xx}(x)=\psi_{xx}(x_m)$, $\forall\ x\in\R$, we obtain that}
$$
0=\left(\int_0^1\p_x\phi (t,x)dt\right)\left[-2\sqrt{\bar{g}}\left(x-\bar{x}\right)+c\right],\quad {\forall\ x\in\R,}
$$
which implies directly,  that
\begin{equation}
\label{Moyenne_null}
\int_0^1\p_x \phi(t,x)dt=0.
\end{equation}
On the other hand, by substituting $\psi(x)$ in \eqref{ExpTay_psi} we find $A=\sqrt{g}$, $B=C=0$.\\
Moreover, we differentiate with respect to $x$ in  the first equation of \eqref{System_approx_App}  and obtain:
$$
\p_x\p_t\phi=\p_x\big(a(\SM{e(t)},x)-\bar a(x)\big)=2x(\bar{g}-g(\SM{e(t)}))+2g(\SM{e(t)})\theta(\SM{e(t)})-2g_1,
$$
and an integration in $[0,t]$ gives
$$
\p_x\phi(t,x)-\p_x\phi(0,x)=2t(x\bar{g}-g_1)-2\int_0^tg(\SM{e(s)})(x-\theta(\SM{e(s)}))ds.
$$
We next integrate the last equality in $\left[0,1\right]$ and use \eqref{Moyenne_null} to obtain 
$$
\p_x \phi(0,x)=-x\bar{g}+g_1+2\int_0^1\int_0^tg(\SM{e(s)})(x-\theta(\SM{e(s)}))dsdt,
$$
from where we deduce that
$$
\begin{array}{rl}
\p_x\phi(t,x)&=(2t-1)(x\bar{g}-g_1)+2\int_0^1\int_0^\tau g({e(s)})(x-\theta({e(s)}))dsd\tau\\
&-2\int_0^tg({e(s)})(x-\theta({e(s)}))ds.
\end{array}
$$
Evaluating in $\bar{x}$ gives
$$
\begin{array}{rl}
   D(t)=\p_x\phi(t,\bar{x})=&-c\sqrt{\bar{g}}\left(t-\frac{1}{2}\right)+2\int_0^1\int_0^\tau g(\SM{e(s)})(\bar{x}-\theta(\SM{e(s)}))dsd\tau  \\\\
&-2\int_0^tg(\SM{e(s)})(\bar{x}-\theta(\SM{e(s)}))ds.
\end{array}
$$
Now we are able to give an approximation of the population mean size $\overline{\rho}_\e$, the phenotypical mean $\mu_\e$ and the variance $\sigma^2_\e$ of the population's distribution, following the previous computations, that is:
$$
 \mu_\e(t)\approx\frac{g_1}{\bar{g}}-\frac{c}{2\sqrt{\bar{g}}} +\e D(t) 
,\quad \sigma^2_\e\approx \frac{\e}{\sqrt{\bar{g}}}.
$$

\bigskip

 {\bf Acknowledgments.}
The authors thank Jean-Marc Bouclet for fruitful discussions. The second author is also grateful for partial funding from the European Research Council (ERC) under the European Union's Horizon 2020 research and innovation programme (grant agreement No 639638), held by Vincent Calvez, and {from the chaire Mod\'elisation Math\'ematique et Biodiversit\'e of V\'eolia Environment-\'Ecole Polytechnique -Museum National d'Histoire Naturelle- Fondation X. }

\bibliographystyle{plain}	
\bibliography{biblio}

\begin{thebibliography}{10}

\bibitem{Raoul_Berest_Alfaro}
\textsc{Alfaro, M., Berestycki, H. and Raoul, G.}
\newblock The effect of climate shift on a species submitted to dispersion,
  evolution, growth, and nonlocal competition.
\newblock {\em SIAM Journal on Mathematical Analysis}, 49(1):562--596, 2017.

\bibitem{Almeida}
\textsc{Almeida, L., Bagnerini, P., Fabrini, G., Hughes, B.D. and Lorenzi, T.}
\newblock Evolution of cancer cell populations under cytotoxic therapy and
  treatment optimisation: insight from a phenotype-structured model.
\newblock {\em ESAIM: Mathematical Modelling and Numerical Analysis (ESAIM:
  M2AN)}, In press, 1 2019.

\bibitem{GuyBarles}
\textsc{Barles, G.}
\newblock {S}olutions de viscosit\'e des \'equations de {H}amilton-{J}acobi.
\newblock {\em Math\'ematiques \& Applications}, vol 17, 1994.

\bibitem{BarlesEqui}
\textsc{Barles, G., Biton, S. and Ley, O.}
\newblock {A} geometrical approach to the study of unbounded solutions of
  quasilinear parabolic equations.
\newblock {\em Arch. Rational Mech. Anal}, vol. 162,\::287--325, 2002.

\bibitem{MAA}
\textsc{Barles, G., Mirrahimi, S. and Perthame, B.}
\newblock {C}oncentration in {L}otka-{V}olterra parabolic or integral
  equations: {A} general convergence result.
\newblock {\em Methods \& Applications of Analysis}, vol. 16,\:(No.
  3):321--340, 2009.

\bibitem{Berestycki_Fang}
\textsc{Berestycki, H. and Fang, J.}
\newblock Forced waves of the {F}isher-{KPP} equation in a shifting
  environment.
\newblock {\em Journal of Differential Equations}, 264(3):2157 -- 2183, 2018.

\bibitem{Berestycki_Rossi1}
\textsc{Berestycki, H. and Rossi, L.}
\newblock Reaction-diffusion equations for population dynamics with forced
  speed i - the case of the whole space, disc.
\newblock {\em Discrete and Continuous Dynamical Systems}, 21, 05 2008.

\bibitem{Berestycki_Rossi2}
\textsc{Berestycki, H. and Rossi, L.}
\newblock Reaction-diffusion equations for population dynamics with forced
  speed ii - cylindrical-type domains.
\newblock {\em Discrete and Continuous Dynamical Systems - A},
  25(1078-0947):19, 2009.

\bibitem{Berestycki-et-al}
\textsc{Berestycki, H., Diekmann, O., Nagelkerke, C. J. and Zegeling, P. A.}
\newblock Can a species keep pace with a shifting climate?
\newblock {\em Bulletin of Mathematical Biology}, 71(2):399, Dec 2008.

\bibitem{Bouhours2016}
\textsc{Bouhours, J. and Lewis, M. A.}
\newblock Climate change and integrodifference equations in a stochastic
  environment.
\newblock {\em Bulletin of Mathematical Biology}, 78(9):1866--1903, Sep 2016.

\bibitem{RB.LM:95}
\textsc{Burger, R., Lynch, M.}
\newblock Evolution and extinction in a changing environment: A
  quantitative-genetic analysis.
\newblock {\em Evolution; international journal of organic evolution},
  49(1):151--163, 1 1995.

\bibitem{Carrere_Nadin_pprint}
\textsc{Carr\`ere, C., and Nadin, G.}
\newblock Influence of mutations in phenotypically-structured populations in
  time periodic environment.
\newblock (preprint), https://hal.archives-ouvertes.fr/hal-02273079v2.

\bibitem{chevin-et-al}
\textsc{Chevin, L-M., Visser, M.E. and Tufto, J.}
\newblock Estimating the variation, autocorrelation, and environmental
  sensitivity of phenotypic selection.
\newblock {\em Evolution}, 69(9):2319--2332, 2015.

\bibitem{DJMP}
\textsc{Diekmann, O., Jabin, P-E., Mischler, S., and Perthame, B.}
\newblock {T}he dynamics of adaptation: {A}n illuminating example and a
  {H}amilton-{J}acobi approach.
\newblock {\em Theoretical Population Biology}, 67(4):257--271, 2005.

\bibitem{EvansHomog}
\textsc{Evans, L.C.}
\newblock {P}eriodic homogenisation of certain fully nonlinear partial
  differential equations.
\newblock {\em Proceedings of the Royal Society of Edinburgh},
  120A(1):245--265, 1992.

\bibitem{Mirrahimi&Figueroa_pprint}
\textsc{Figueroa Iglesias, S. and Mirrahimi, S.}
\newblock {L}ong time evolutionary dynamics of phenotypically structured
  populations in time-periodic environments.
\newblock {\em SIAM Journal on Mathematical Analysis}, 50(5):5537--5568, 2018.

\bibitem{JG.BT.HD:20}
\textsc{Gauzere, J., Teuf, B., Davi, H., Chevin, L.-M., Caignard, T., Leys, B.,
  Delzon, S., Ronce, O. and Chuine, I.}
\newblock Where is the optimum? predicting the variation of selection along
  climatic gradients and the adaptive value of plasticity. a case study on tree
  phenology.
\newblock {\em Evolution Letters}, 4(2):109--123, 2020.

\bibitem{Helffer}
\textsc{Helffer, B.}
\newblock Introduction to semi-classical methods for the {S}chr\"odinger
  operator with magnetic field.
\newblock {\em S\'eminaires et Congr\'es}, 05 2004.

\bibitem{hess}
\textsc{Hess, P.}
\newblock {\em {P}eriodic-{P}arabolic Boundary Value Problems and Positivity}.
\newblock Series 247. Longman Scientific \& Technical, {H}arlow, Essex, UK,
  1991.

\bibitem{huska06}
\textsc{H\'uska, J.}
\newblock {H}arnack inequality and exponential separation for oblique
  derivative problems on {L}ipschitz domains.
\newblock {\em J. Differential Equations}, vol. 226,\::541--557, 2006.

\bibitem{huska08}
\textsc{H\'{u}ska, J. and Pol\'{a}\v{c}ik, P.}
\newblock Exponential separation and principal {F}loquet bundles for linear
  parabolic equations on {$\Bbb R^N$}.
\newblock {\em Discrete Contin. Dyn. Syst.}, 20(1):81--113, 2008.

\bibitem{MK.SM:14}
\textsc{Kopp, M. and Matuszewski, S.}
\newblock Rapid evolution of quantitative traits: theoretical perspectives.
\newblock {\em Evolutionary Applications}, 7(1):169--191, 2014.

\bibitem{RL.SS:96}
\textsc{Lande, R. and Shannon, S.}
\newblock The role of genetic variation in adaptation and population
  persistence in a changing environment.
\newblock {\em Evolution}, 50(1):434--437, 1996.

\bibitem{Lions}
\textsc{Lions, P.L.}
\newblock {\em {G}eneralized solutions of Hamilton-Jacobi equations}, volume
  69\: of {\em Research notes in mathematics}.
\newblock Pitman Advanced Publishing Program, Boston, 1982.

\bibitem{Lorenzi-Desvillettes}
\textsc{Lorenzi, T., Chisholmm R.H., Desvillettes, L. and Hughes, B.D.}
\newblock {D}issecting the dynamics of epigenetic changes in
  phenotype-structured populations exposed to fluctuating environments.
\newblock {\em Journal of Theoretical Biology, Elsevier}, 386:166--176, 2015.

\bibitem{ML.RL:93}
\textsc{Lynch, M. and Lande, R.}
\newblock Evolution and extinction in response to environmental change.
\newblock In J.~Kingsolver P.~Kareiva and R.~Huey, editors, {\em Biotic
  Interactions and Global Change}, pages 234--250. Sinauer Associates Inc.,
  1993.

\bibitem{ML.WG.AW:91}
\textsc{Lynch, M., Gabriel, W. and Wood, A. M.}
\newblock Adaptive and demographic response of plankton populations to
  environmental change.
\newblock {\em Limnol. Oceanogr.}, 36:1301--1312, 1991.

\bibitem{Mirrahimi2017}
\textsc{Mirrahimi, S.}
\newblock A {H}amilton-{J}acobi approach to characterize the evolutionary
  equilibria in heterogeneous environments.
\newblock {\em Mathematical Models and Methods in Applied Sciences},
  27(13):2425--2460, 2017.

\bibitem{Souganidis}
\textsc{Mirrahimi, S., Perthame, B. and Souganidis, P.}
\newblock {T}ime fluctuations in a population model of adaptive dynamics.
\newblock {\em Ann. I.H.Poincar\'e}, vol. 32,\:(No. 1):41--58, 2015.

\bibitem{Perthame_Barles}
\textsc{Perthame, B. and Barles, G.}
\newblock {D}irac concentrations in {L}otka-{V}olterra parabolic {PDE}s.
\newblock {\em Indiana Univ. Math. J.}, vol. 7,\::3275--3301, 2008.

\end{thebibliography}

\end{document}